\numberwithin{equation}{section}
\numberwithin{figure}{section}
\theoremstyle{plain}
\newtheorem{thm}{\protect\theoremname}[section]
\theoremstyle{remark}
\newtheorem{rem}[thm]{\protect\remarkname}
\theoremstyle{plain}
\newtheorem{lem}[thm]{\protect\lemmaname}
\theoremstyle{plain}
\newtheorem{prop}[thm]{\protect\propositionname}
\newcommand{\df}{\mathrm{d}}
\providecommand{\lemmaname}{Lemma}
\providecommand{\propositionname}{Proposition}
\providecommand{\remarkname}{Remark}
\providecommand{\theoremname}{Theorem}
\begin{document}
\global\long\def\df{\mathrm{def}}%
\global\long\def\eqdf{\stackrel{\df}{=}}%
\global\long\def\ep{\varepsilon}%
\global\long\def\ind{\mathds{1}}%

\title{Large-$n$ asymptotics for Weil-Petersson volumes of moduli spaces
of bordered hyperbolic surfaces }
\author{Will Hide and Joe Thomas}
\maketitle
\begin{abstract}
We study the geometry and spectral theory of Weil-Petersson random
surfaces with genus-$g$ and $n$ cusps in the large-$n$ limit.

We show that for a random hyperbolic surface in $\mathcal{M}_{g,n}$
with $n$ large, the number of small Laplacian eigenvalues is linear
in $n$ with high probability. By work of Otal and Rosas \cite{Ot.Ro09},
this result is optimal up to a multiplicative constant. 

We also study the relative frequency of simple and non-simple closed
geodesics, showing that on random surfaces with many cusps, most closed
geodesics with lengths up to $\log(n)$ scales are non-simple.

Our main technical contribution is a novel large-$n$ asymptotic formula
for the Weil-Petersson volume $V_{g,n}\left(\ell_{1},\dots,\ell_{k}\right)$
of the moduli space $\mathcal{M}_{g,n}\left(\ell_{1},\dots,\ell_{k}\right)$
of genus-$g$ hyperbolic surfaces with $k$ geodesic boundary components
and $n-k$ cusps with $k$ fixed, building on work of Manin and Zograf
\cite{Ma.Zo00}. 

{\footnotesize\tableofcontents{}}{\footnotesize\par}
\end{abstract}

\section{Introduction}

Let $\mathcal{M}_{g,n}$ denote the moduli space of genus-$g$ hyperbolic
surfaces with $n$ cusps. One can equip $\mathcal{M}_{g,n}$ with
a natutral probability measure $\mathbb{P}_{g,n}$ by normalising
the Weil-Petersson volume form. Stemming from the works of Guth, Parlier
and Young \cite{Gu.Pa.Yo11}, and Mirzakhani \cite{Mi2013} there
has been significant interest in the geometry and spectral theory
of Weil-Petersson random surfaces of large volume. By the Gauss-Bonnet
theorem, sampling a large volume hyperbolic surface corresponds to
taking $g+n\to\infty$. 

So far, the vast majority of attention in the literature has focused
on compact surfaces in the large-genus limit, $g\to\infty$, with
great success in understanding their typical geometric and spectral
features. On the other hand, the low genus setting has seen a reemergence
in the physics literature in relation to the study of Jackiw-Teitelboim
(JT) gravity (see the recent survey \cite{Me.Tu2023}), a simple solvable
model of quantum gravity in two dimensions. Moreover, there have been
recent connections \cite{Bu.Cu2024} drawn between Weil-Petersson
random surfaces in the $n\to\infty$ regime and random planar maps
which have been fruitful in investigating local and global geometric
properties of genus zero surfaces (see Section \ref{subsec:Related-work}).
These developments give strong impetus to study Weil-Petersson random
surfaces with many cusps and is the focus of the current paper. 

Prior work has shown that such surfaces exhibit very different and
interesting spectral geometric behaviours \cite{Zo93,Hi.Th.22,Sh.Wu22}
to their compact, large genus counterparts, but they are yet to be
intensively studied. The results obtained here focus on eigenvalues
of the Laplacian and structure of closed geodesics and are proven
through novel asymptotics for Weil-Petersson volumes of moduli spaces
of bordered hyperbolic surfaces with many cusps that will be applicable
to other spectral geometric questions in this setting.

\subsection{Small eigenvalues\protect\label{subsec:Applications1}}

Let $X$ be a complete, connected and orientable finite-area hyperbolic
surface. The spectrum of the Laplacian $\Delta_{X}$ is contained
inside $[0,\infty)$ with $0$ a simple eigenvalue. When $X$ is non-compact,
the spectrum in $(0,\frac{1}{4})$ consists of finitely many discrete
eigenvalues called \textit{exceptional eigenvalues} and absolutely
continuous spectrum in $[\frac{1}{4},\infty)$, with possibly infinitely
many embedded eigenvalues. The first result of this paper is concerned
with the number of exceptional eigenvalues, $N^{\text{exc}}\left(X\right)$,
for random surfaces in $\mathcal{M}_{g,n}$ as $n\to\infty$.

A result of Zograf \cite{Zo1987} states that for $X\in\mathcal{\mathcal{M}}_{g,n}$,
the infimum $\lambda_{1}(X)$ of the non-zero spectrum of the Laplacian
satisfies

\begin{equation}
\lambda_{1}\left(X\right)\leqslant C\frac{g+1}{n},\label{eq:zograf-lambda1}
\end{equation}
for a universal constant $C>0$. On the other hand, by a result of
Otal and Rosas \cite{Ot.Ro09}, any surface in $\mathcal{M}_{g,n}$
has at most $2g+n-3$ exceptional eigenvalues. Thus for fixed $g$
and sufficiently large $n$,
\begin{equation}
1\leqslant N^{\text{exc}}\left(X\right)\leqslant2g+n-3.\label{eq:exceptional-bound}
\end{equation}

Our first result says that for fixed $g$, a random surface in $\mathcal{M}_{g,n}$
saturates the upper bound (\ref{eq:exceptional-bound}) up to a constant
multiplicative factor, with probability tending to $1$ as $n\to\infty$.
\begin{thm}
\label{thm:eigenvalues}For any $\ep>0$ there exists a constant $C(\ep)>0$
such that for $g\geqslant0$ a Weil-Petersson random surface $X\in\mathcal{M}_{g,n}$
has at least $C(\ep)n$ eigenvalues below $\ep$ with probability
tending to $1$ as $n\to\infty$.
\end{thm}

A weaker version of Theorem \ref{thm:eigenvalues} with $C\left(\ep\right)n$
replaced by any function $\nu:\mathbb{N}\to\mathbb{N}$ with $\nu(n)=o(n)$
was proven in \cite[Theorem 1.10]{Hi.Th.22}. The proof of Theorem
\ref{thm:eigenvalues} and the prior result in \cite{Hi.Th.22} is
to use a min-max theorem with orthogonal functions that localise around
short closed geodesics each separating off two cusps from the surface.
The strengthening of the lower bound obtained here is due to the strong
Weil-Petersson volume asymptotics proven later that allow for simultaneous
control over a linear (rather than the previous sub-linear) number
of such curves that hold with high probability. 
\begin{rem}
The constant $C(\ep)$ can be given explicitly. If $\ep<\frac{1}{4}$,
the constant $C\left(\varepsilon\right)$ can be taken to be $\frac{1}{25\pi}\varepsilon J_{1}(j_{0})I_{1}\left(\frac{j_{0}\varepsilon}{12\pi}\right).$
\end{rem}

It is interesting to ask whether or not the eigenvalues guaranteed
by Theorem \ref{thm:eigenvalues} are cusp forms. We believe the answer
is no although we do not have a proof. We note that when $g=0$ or
$1$, any small eigenvalue is residual \cite{Hu83} (i.e. arises as
a pole of the scattering matrix). It is conjectured by Otal and Rosas
\cite{Ot.Ro09} that on any surface in $\mathcal{M}_{g,n}$, the number
of small cuspidal eigenvalues is bounded above by $2g-3$. It would
then follow that the number of residual eigenvalues on a random surface
in $\mathcal{M}_{g,n}$ is linear in $n$ with probability tending
to $1$ as $n\to\infty$. 

\subsection{Relative frequencies of closed curves\protect\label{subsec:Applications2}}

Our next result is on the relative frequencies of simple and non-simple
closed geodesics. For $X\in\mathcal{M}_{g,n}$, we define $N\left(X,L\right)$
to be the number of unoriented primitive closed geodesics on $X$
of length less than $L$. Similarly we define $N^{s}\left(X,L\right)$
(resp. $N^{\text{ns}}\left(X,L\right)$) to be the number of such
curves that are simple (resp. non-simple). By the results of \cite{Hi.Th.22},
a random surface with many cusps has lots of geodesics with lengths
at most $2\mathrm{arccosh}(3)$, all of which must be simple. The
prime geodesic theorem \cite{De1942} says that for any $X$,
\begin{equation}
N\left(X,L\right)\sim\frac{e^{L}}{2L}\label{eq:PGT}
\end{equation}
as $L\to\infty$, whilst by \cite{Mi2008} (see also \cite{Ri2001}),
\begin{equation}
N^{s}\left(X,L\right)\sim\eta(X)L^{6g-6+2n},\label{eq:PGT_simple}
\end{equation}
where $\eta:\mathcal{M}_{g,n}\to\mathbb{R}_{+}$ is a continuous proper
function. It then follows from (\ref{eq:PGT}) and (\ref{eq:PGT_simple})
that for any fixed $X\in\mathcal{M}_{g,n}$ one has 
\[
N^{s}\left(X,L\right)\leqslant\ep(L)N^{\mathrm{ns}}\left(X,L\right),
\]
for large enough $L$, where $\ep\left(L\right)\to0$ as $L\to\infty$.
In other words, non-simple geodesics become more abundant at large
enough length scales. It is natural to ask at what scale this transition
occurs.

This problem has recently been studied for random large-genus compact
surfaces. It was shown by Wu and Xue \cite[Theorem 4]{Wu.Xu22a} that
for a Weil-Petersson random genus $g$ compact surface, as $g\to\infty$,
most geodesics of length much shorter than $\sqrt{g}$ are simple
and non-separating whereas most geodesics of length much longer than
$\sqrt{g}$ are non-simple. This confirmed a conjecture made by Lipnowski
and Wright \cite{Li.Wr21}. Subsequently, Dozier and Sapir \cite[Theorem 1.1]{Do.Sa2023}
gave an alternative proof of the fact that most geodesics much longer
than $\sqrt{g}\log g$ are non-simple via dynamical methods which
also apply to other random models. 

We study the analogous problem for random surfaces with many cusps
and prove the following.
\begin{thm}
\label{thm:simple/non-simple-intro}Let $L=L(n)>0$ be any function
with $L\to\infty$ as $n\to\infty$ and $L=O\left(\log n\right)$.
Then there exists a function $\varepsilon(n)$ with $\varepsilon\to0$
as $n\to\infty$ such that 
\[
\mathbb{P}_{g,n}\left(X\in\mathcal{M}_{g,n}:N^{\mathrm{s}}(X,L)\leq\varepsilon(n)N^{\mathrm{ns}}(X,L)\right)\to1,
\]
as $n\to\infty$.
\end{thm}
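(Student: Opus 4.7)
The plan is to bound $N^{\mathrm{s}}(X,L)$ from above with high probability using Mirzakhani's integration formula combined with Theorem \ref{thm:I0-asymptotic}, and to bound $N^{\mathrm{ns}}(X,L)$ from below with high probability by exhibiting many pairwise disjoint embedded pairs of pants, each carrying exponentially many non-simple primitive closed geodesics with a growth rate strictly exceeding the upper bound on $N^{\mathrm{s}}$.

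For the upper bound, Mirzakhani's integration formula gives
\[
\mathbb{E}_{g,n}[N^{\mathrm{s}}(X,L)] = \sum_{T}\frac{1}{|\mathrm{Sym}(T)|\,V_{g,n}}\int_{0}^{L}\ell\,V(T,\ell)\,\mathrm{d}\ell,
\]
where the sum ranges over MCG-orbits $T$ of simple closed curves on $\Sigma_{g,n}$ and $V(T,\ell)$ is the product of Weil--Petersson volumes of the components of the cut surface. Applying Theorem \ref{thm:I0-asymptotic} to the large-$n$ component together with the Manin--Zograf asymptotic (Theorem \ref{thm:Manin-Zograf}), and summing over the topological types (the dominant contributions arising from separating curves whose genus-$0$ side contains a bounded number of cusps), one obtains a bound $\mathbb{E}_{g,n}[N^{\mathrm{s}}(X,L)]=O(n\,Q(L)\,e^{\alpha L})$ for some polynomial $Q$ and exponent $\alpha<1$ determined in terms of $j_{0}/(2\pi)$; the hypothesis $L=O(\log n)$ is used to ensure the error term in Theorem \ref{thm:I0-asymptotic} is dominated by the main term uniformly over the relevant splittings. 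Markov's inequality then yields $N^{\mathrm{s}}(X,L)\leq n^{\eta(n)}\mathbb{E}_{g,n}[N^{\mathrm{s}}(X,L)]$ with probability at least $1-n^{-\eta(n)}$, for any $\eta(n)\to 0$.

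For the lower bound on $N^{\mathrm{ns}}$, I would first show via a second-moment (Chebyshev) argument in the spirit of Theorem \ref{thm:number-curves} that with probability tending to $1$, $X$ contains at least $cn$ pairwise disjoint embedded pairs of pants $P_{1},\dots,P_{\lfloor cn\rfloor}$, each with a geodesic cuff of length at most $L_{0}$ for some small fixed $L_{0}$ and with two cusps of $X$ as its other boundary components; disjointness is arranged by a greedy matching argument using that each such pants occupies only two of the $n$ cusps. Since each cuff is geodesic in $X$, the universal-cover lift $\widetilde{P_{i}}\subset\mathbb{H}^{2}$ is convex (an intersection of half-planes bounded by lifts of the cuff), so the axis of every hyperbolic element of $\pi_{1}(P_{i})\hookrightarrow\pi_{1}(X)$ stays in $\widetilde{P_{i}}$, and the corresponding closed geodesic of $X$ is contained in $P_{i}$. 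The Huber--Margulis prime geodesic theorem applied to the geometrically finite Fuchsian group $\pi_{1}(P_{i})\subset\PSL_{2}(\mathbb{R})$ yields $\sim e^{\delta L}/L$ primitive closed geodesics in $P_{i}$ of length $\leq L$, where the critical exponent $\delta=\delta(L_{0})$ tends to $1$ as $L_{0}\to 0$ (since in that limit $P_{i}$ degenerates to a thrice-punctured sphere whose fundamental group is a lattice of critical exponent $1$); in particular, choosing $L_{0}$ small enough forces $\delta>\alpha$. Only the cuff itself gives a simple closed geodesic in $P_{i}$, and disjointness of the $P_{i}$ produces distinct geodesics in $X$, so $N^{\mathrm{ns}}(X,L)\geq c'n\,e^{\delta L}/L$ with high probability whenever $L\to\infty$.

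Combining the two bounds, with probability tending to $1$,
\[
\frac{N^{\mathrm{s}}(X,L)}{N^{\mathrm{ns}}(X,L)}\leq\frac{n^{\eta(n)}\,Q(L)\,L\,e^{(\alpha-\delta)L}}{c'}=:\varepsilon(n),
\]
and since $\alpha-\delta<0$, one can choose $\eta(n)\to 0$ slowly enough (depending on $L$) so that $\varepsilon(n)\to 0$ for any $L=L(n)\to\infty$ with $L=O(\log n)$, thereby proving the theorem. The main obstacles I anticipate are: (i)~summing the expected simple-geodesic count over all topological types while controlling the error from Theorem \ref{thm:I0-asymptotic} uniformly in the splitting type, so as to identify the true exponent $\alpha$; (ii)~extracting $cn$ pairwise \emph{disjoint} embedded pants via a combinatorial matching on the $(P,\text{cusp-pair})$ hypergraph, which requires careful second-moment control on the associated counting random variables; and (iii)~verifying the strict inequality $\delta>\alpha$ by taking the cuff length sufficiently small, relying on the continuity of the critical exponent as the pants degenerates.
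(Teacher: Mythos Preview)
Your proposal is correct and follows essentially the same route as the paper: an upper bound on $\mathbb{E}_{g,n}[N^{\mathrm{s}}(X,L)]$ via Mirzakhani's integration formula and Theorem~\ref{thm:I0-asymptotic} (giving growth $e^{\alpha L}$ with $\alpha<1$ because $j_{0}/(2\pi)<1/2$), combined with a lower bound on $N^{\mathrm{ns}}(X,L)$ obtained by finding $\sim cn$ disjoint two-cusped pairs of pants with short cuffs and applying a prime geodesic theorem with critical exponent close to $1$. One simplification: your anticipated obstacle~(ii) largely evaporates, since if you take the cuff length $L_{0}<2\operatorname{arcsinh}1$ then the cuffs are automatically simple and pairwise disjoint by the collar lemma, so the pants they bound are disjoint without any matching argument---this is exactly how the paper proceeds, invoking Remark~\ref{rem:pants-sep-curves} (which is the $N_{2}(X,\varepsilon)$ version of Theorem~\ref{thm:number-curves}).
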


In other words, for any $g\geqslant0$ and large $n$, on a typical
surface in $\mathcal{M}_{g,n}$, most long geodesics (which are not
too long) are non-simple. This is in stark contrast to the large genus
case. 
\begin{rem}
We believe the conclusion of Theorem \ref{thm:simple/non-simple-intro}
is true for any $L\to\infty$ without the $L=O\left(\log n\right)$
condition. The condition $L=O\left(\log n\right)$ is essentially
an artifact of our current error term $\frac{1}{n^{\frac{1}{4}}}\cosh\left(\frac{L}{2}\right)$
for Weil-Petersson volumes of moduli spaces in Theorem \ref{thm:I0-asymptotic}
which causes problems when $L$ is too large. Removing the condition
could be done by improving error terms in Theorem \ref{thm:int-asymptotic}
or possibly employing a more involved method such as \cite{Do.Sa2023,Wu.Xu2022},
which we do not pursue here.
\end{rem}

We now highlight a related problem. Let $N_{\mathrm{sep}}^{s}\left(X,L\right)$
(resp. $N_{\mathrm{nonsep}}^{s}\left(X,L\right)$) count the number
of separating (resp. non-separating) closed geodesics on $X$ of length
at most $L$. By the work of Mirzakhani \cite{Mi2008}, on any surface
$X\in\mathcal{M}_{g,n}$,
\[
\lim_{L\to\infty}\frac{N_{\mathrm{sep}}^{s}\left(X,L\right)}{N_{\mathrm{nonsep}}^{s}\left(X,L\right)}=\frac{c_{g,n,\mathrm{sep}}}{c_{g,n,\mathrm{nonsep}}},
\]
where $\frac{c_{g,n,\mathrm{sep}}}{c_{g,n,\mathrm{nonsep}}}\in\mathbb{Q}$
depends only on $g$ and $n$. The asymptotic behaviour of $\frac{c_{g,n,\mathrm{sep}}}{c_{g,n,\mathrm{nonsep}}}$
in $g$ and $n$ was recently studied by Delecroix, Goujard, Zograf
and Zorich in \cite{De.Go.Zo.Zo2023} and by Ren \cite{Re2023}. Interestingly,
the asymptotics of $\frac{c_{g,n,\mathrm{sep}}}{c_{g,n,\mathrm{nonsep}}}$
are not particularly sensitive to the ratio $\frac{n}{g}$, in contrast
to many geometric quantities \cite{Zo93,Sh.Wu22,Hi.Th.22}. 

For a random surface in $\mathcal{M}_{g,n}$ with $n$ large, there
is an abundance of very short separating simple closed geodesics \cite{Hi.Th.22}.
This indicates a transition in the ratio of $N_{\mathrm{sep}}^{s}\left(X,L\right)$
vs $N_{\mathrm{nonsep}}^{s}\left(X,L\right)$ for some $L=L(n)$.
At what scale does this transition occur for a random surface?

\subsection{Asymptotics of Weil-Petersson volumes}

We consider the Weil-Petersson volume $V_{g,n}\left(\ell_{1},\dots,\ell_{k}\right)$
of the moduli space $\mathcal{M}_{g,n}\left(\ell_{1},\dots,\ell_{k}\right)$
of bordered hyperbolic surfaces of genus $g$ with $n-k$ cusps and
$k$ labelled geodesic boundaries with lengths $\ell_{1},\dots,\ell_{k}\geqslant0$. 

After Mirzakhani's celebrated thesis works \cite{Mi2007,Mi07a}, the
asymptotic behaviour of $V_{g,n}\left(\ell_{1},\dots,\ell_{k}\right)$
is crucial for computing the integrals of geometric functions with
respect to the Weil-Petersson volume form (or from a probabilistic
perspective, computing moments of random variables with respect to
$\mathbb{P}_{\mathrm{WP}}$). A key insight in the large-genus case
is the sinh expansion, appearing first in the work of Mirzakhani and
Petri \cite{Mi.Pe19}, which states

\begin{equation}
\frac{V_{g,n}\left(\ell_{1},\dots,\ell_{k}\right)}{V_{g,n}}=\prod_{i=1}^{k}\frac{\sinh\left(\frac{\ell_{i}}{2}\right)}{\left(\frac{\ell_{i}}{2}\right)}\left(1+O_{n}\left(\frac{\sum\ell_{i}^{2}}{g}\right)\right),\label{eq:sinh-approx}
\end{equation}
as $g\to\infty$. The estimate (\ref{eq:sinh-approx}), or variants
of it (see the works \cite{Mi.Pe19,Ni.Wu.Xu2023,An.Mo2022,Wu.Xu22a}),
have been paramount in understanding questions about the spectral
gap, eigenvalue distribution and geometry of geodesics on random hyperbolic
surfaces of large genus (Section \ref{subsec:Related-work}).

The main technical contribution of this paper is to develop an analogue
of (\ref{eq:sinh-approx}) for the large-$n$ case. We prove the following.
\begin{thm}
\label{thm:I0-asymptotic}For any $g,k\geqslant0$, 
\[
\frac{V_{g,n}(\ell_{1},\dots,\ell_{k})}{V_{g,n}}=\prod_{i=1}^{k}I_{0}\left(\frac{j_{0}}{2\pi}\ell_{i}\right)+O_{g}\left(\frac{1}{n^{\frac{1}{4}}}\prod_{i=1}^{k}\cosh\left(\frac{\ell_{i}}{2}\right)\right),
\]
where $I_{0}$ is the modified Bessel function of the first kind and
$j_{0}$ is the first positive zero of the Bessel function of the
first kind $J_{0}$.
\end{thm}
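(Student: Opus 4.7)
The plan is to combine Mirzakhani's polynomial expansion of $V_{g,n}(\ell_1,\dots,\ell_k)$ with a Manin--Zograf style singularity analysis of the exponential generating functions that arise from summing over the number of cusps. Writing $[\tau_{d_1}\cdots\tau_{d_n}]_{g,n} := \int_{\overline{\mathcal{M}}_{g,n}}\psi_1^{d_1}\cdots\psi_n^{d_n}\exp(2\pi^2\kappa_1)$ for the usual Weil--Petersson intersection pairing (so $V_{g,n}=[\tau_0^n]_{g,n}$), Mirzakhani's formula gives
\[
V_{g,n}(\ell_1,\dots,\ell_k) \;=\; \sum_{\alpha\in\mathbb{Z}_{\geqslant 0}^k,\ |\alpha|\leqslant 3g-3+n} \frac{[\tau_{\alpha_1}\cdots\tau_{\alpha_k}\tau_0^{n-k}]_{g,n}}{2^{|\alpha|}\alpha!}\prod_{i=1}^k\ell_i^{2\alpha_i},
\]
while the target admits the Taylor expansion
\[
\prod_{i=1}^k I_0\bigl(\tfrac{j_0}{2\pi}\ell_i\bigr) \;=\; \sum_{\alpha}\frac{(j_0/(4\pi))^{2|\alpha|}}{(\alpha!)^2}\prod_{i=1}^k\ell_i^{2\alpha_i}.
\]
Matching coefficients, the theorem reduces to showing that the normalised intersection numbers
\[
R_{g,n}(\alpha) \;:=\; \frac{[\tau_{\alpha_1}\cdots\tau_{\alpha_k}\tau_0^{n-k}]_{g,n}}{V_{g,n}}
\]
approach $\bigl(j_0^2/(8\pi^2)\bigr)^{|\alpha|}/\alpha!$ as $n\to\infty$, with sufficient uniformity in $\alpha$ that the resulting error, summed against $\prod_i\ell_i^{2\alpha_i}/(2^{|\alpha|}\alpha!)$, is bounded by $n^{-1/4}\prod_i\cosh(\ell_i/2)$.

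For the pointwise-in-$\alpha$ asymptotic I would extend the generating function approach of Manin--Zograf to accommodate $\psi$-class insertions. The object of study is
\[
\Phi_{g,\alpha}(x) \;:=\; \sum_{n\geqslant k}[\tau_{\alpha_1}\cdots\tau_{\alpha_k}\tau_0^{n-k}]_{g,n}\,\frac{x^{n-k}}{(n-k)!},
\]
which should be relatable to $\Phi_{g,0}(x)=\sum_{n} V_{g,n}\,x^n/n!$ via the string and dilaton equations; these equations control the effect of inserting a $\tau_0$ or $\tau_1$ marking at a new point and allow one to transfer the singularity analysis from the $V_{g,n}$ case. Manin--Zograf identified the dominant singularity of $\Phi_{g,0}$ at $x_0 = -\tfrac12 j_0 J_0'(j_0)$, producing the asymptotics recalled in Theorem \ref{thm:Manin-Zograf}; I expect $\Phi_{g,\alpha}$ to share the same dominant singularity with leading coefficient differing from that of $\Phi_{g,0}$ by exactly the factor $(j_0^2/(8\pi^2))^{|\alpha|}/\alpha!$. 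A Flajolet--Odlyzko transfer theorem then yields the expected pointwise asymptotic for $R_{g,n}(\alpha)$.

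To upgrade this pointwise statement to a bound uniform in $\ell_1,\dots,\ell_k$, I would split the sum over $\alpha$ at a threshold $|\alpha|\leqslant N=N(n)$. On the head, the pointwise asymptotic contributes an error of the form $O_g(N^C n^{-1/2})\prod_i I_0\bigl(\tfrac{j_0}{2\pi}\ell_i\bigr)$, where $C=C(g)$ absorbs the polynomial $\alpha$-dependence appearing in the sub-leading terms of the singular expansion. On the tail $|\alpha|>N$, both the true normalised coefficient $R_{g,n}(\alpha)/(2^{|\alpha|}\alpha!)$ and the corresponding Taylor coefficient of $\prod I_0$ must be dominated by a uniform estimate of the shape
\[
R_{g,n}(\alpha) \;\leqslant\; \frac{C_g\cdot 2^{|\alpha|}\alpha!}{4^{|\alpha|}\prod_i(2\alpha_i)!},
\]
whose particular virtue is that summation against $\prod_i\ell_i^{2\alpha_i}/(2^{|\alpha|}\alpha!)$ reproduces exactly $C_g\prod_i\cosh(\ell_i/2)$; the truncation decay in $N$ then provides the small factor that, combined with the head contribution, yields the $n^{-1/4}$ rate once $N$ is optimised against $n$.

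The principal technical obstacle is the uniform-in-$\alpha$ tail bound above: one requires an estimate on $R_{g,n}(\alpha)$ that is sharp enough in the degree $|\alpha|$ to be absorbable by the coefficients of $\prod_i\cosh(\ell_i/2)$. This should follow from a contour-integral estimate on $\Phi_{g,\alpha}$ away from its dominant singularity, carried out uniformly in $\alpha$, possibly combined with combinatorial inequalities for Weil--Petersson intersection numbers in the spirit of Mirzakhani--Zograf. Once such a bound is secured the head--tail split and optimisation of $N$ are routine; the bulk of the technical effort lies precisely in securing uniformity in $\alpha$ and in executing the transfer from pointwise to uniform asymptotics with the required quantitative control.
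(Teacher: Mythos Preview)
Your overall architecture---expand $V_{g,n}(\ell_1,\dots,\ell_k)$ via Mirzakhani's formula, establish the large-$n$ asymptotic of the normalised intersection numbers, then sum---is exactly the paper's route. But two of the ingredients you propose diverge from what the paper does, and in each case your version has a genuine gap.

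\textbf{The intersection-number asymptotic.} You propose to relate $\Phi_{g,\alpha}$ to $\Phi_{g,0}$ via the string and dilaton equations and then apply a transfer theorem. String and dilaton alone will not do this: they only move a single $\tau_0$ or $\tau_1$ in or out and are linear in the correlators, whereas the Bessel answer $(j_0^2/8\pi^2)^{|\alpha|}$ is governed by a \emph{quadratic} structure. The paper's engine is the Liu--Xu recursion (their Lemma~2.3), which expresses $[\tau_0^2\tau_{\ell+1}\prod\tau_{d_i}]_{g,n+3}$ as a sum of products of intersection numbers on two factors; feeding this into the generating function yields the ODE $\Phi_\ell''' = 8\Phi_0''\Phi_{\ell-1}''' + 4\Phi_0'''\Phi_{\ell-1}''$, whose solution $\Phi_\ell'' = A_\ell(\Phi_0'')^{\ell+1}$ is what produces the constant $(j_0^2/8\pi^2)^\ell$ after evaluating at $x_0$. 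So the paper \emph{does} prove a generating-function relation of the kind you hope for, but it is derived from Liu--Xu, not from string/dilaton, and that recursion is the missing idea in your proposal.

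\textbf{The summation step.} Your head/tail split at $|\alpha|\leqslant N$ does not give a bound uniform in the $\ell_i$: the tail $\sum_{|\alpha|>N}\prod_i(\ell_i/2)^{2\alpha_i}/(2\alpha_i)!$ is only $o\bigl(\prod_i\cosh(\ell_i/2)\bigr)$ when $N\gg\max_i\ell_i$, and the theorem must hold for arbitrary $\ell_i$. The paper avoids any split by proving the intersection asymptotic (Theorem~1.2) with an error $O_g(|d|/n^{1/4})$ that is valid for \emph{every} multi-index $d$ simultaneously; summing this against $\prod_i(\ell_i/2)^{2d_i}/(2d_i+1)!$ and absorbing the $|d|$ via
\[
\sum_{d}|d|\prod_{i=1}^k\frac{(\ell_i/2)^{2d_i}}{(2d_i+1)!}\;\leqslant\;\sum_{i}\ell_i\,\partial_{\ell_i}\!\left(\prod_{j}\frac{\sinh(\ell_j/2)}{\ell_j/2}\right)\;\leqslant\;\prod_{i}\cosh\!\left(\tfrac{\ell_i}{2}\right)
\]
gives the stated remainder directly. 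Incidentally, the uniform upper bound you flag as the ``principal technical obstacle'' is in fact the easy part: the paper's normalisation satisfies $[\tau_{d_1}\cdots\tau_{d_n}]_{g,n}\leqslant V_{g,n}$ for every $d$ (Mirzakhani's Lemma~3.2(1)), which immediately yields the $\sinh$/$\cosh$ majorants. The hard work is in making the error term uniform in $d$ with only linear growth in $|d|$, and that uniformity is precisely what the inductive use of the Liu--Xu recursion delivers.
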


By a result of Mirzakhani \cite{Mi07a} (c.f. Theorem \ref{thm:Mirz-vol-exp}),
the intersection numbers $[\tau_{0}^{n-k}\tau_{d_{1}\cdots}\tau_{d_{k}}]_{g,n}$
of tautological classes on $\overline{\mathcal{M}_{g,n}}$ (see Section
\ref{sec:Background}) play a central role in understanding $V_{g,n}\left(\ell_{1},\dots,\ell_{k}\right)$.
For the large-genus case, the formula (\ref{eq:sinh-approx}) can
be deduced from an estimate, e.g. \cite[Page 286]{Mi2013}, of the
form

\begin{equation}
\frac{[\tau_{d_{1}\cdots}\tau_{d_{n}}]_{g,n}}{V_{g,n}}=1+O_{n}\left(\frac{\left(\sum_{i=1}^{n}d_{i}\right)^{2}}{g}\right).\label{eq:intersection-asymptotic-largeg}
\end{equation}

To prove Theorem \ref{thm:I0-asymptotic}, we develop a novel asymptotic
formula for $[\tau_{0}^{n-k}\tau_{d_{1}\cdots}\tau_{d_{k}}]_{g,n}$
as $n\to\infty$.
\begin{thm}
\label{thm:int-asymptotic}Suppose that $k:\mathbb{N}\to\mathbb{N}$
is such that $k(n)\leqslant\frac{1}{8}\log(n)$. Then for any $d_{1},\ldots,d_{k}$
with $\sum_{i=1}^{k}d_{i}\leq3g+n-3$, as $n\to\infty$,
\[
\frac{[\tau_{0}^{n-k}\tau_{d_{1}}\cdots\tau_{d_{k}}]_{g,n}}{V_{g,n}}=\prod_{i=1}^{k}\left(\frac{j_{0}}{\pi}\right)^{2d_{i}}\frac{(2d_{i}+1)}{\sqrt{\pi}}\frac{\Gamma(d_{i}+\frac{1}{2})}{\Gamma(d_{i}+1)}+O_{g}\left(\frac{\sum_{i=1}^{k}d_{i}}{n^{\frac{1}{4}}}\right),
\]
where $j_{0}$ is the first positive zero of the Bessel function of
the first kind $J_{0}$.
\end{thm}

To our knowledge, Theorem \ref{thm:int-asymptotic} provides the first
result in the literature that studies the intersection numbers $[\tau_{0}^{n-k}\tau_{d_{1}}\cdots\tau_{d_{k}}]_{g,n}$
in the $n\to\infty$ regime. One difficulty with studying this quantity
is that, as opposed to the $g\to\infty$ regime, the complexity of
recursive formulae satisfied by the intersection numbers increases
rapidly in $n$. In particular, the number of terms in such formulae
that contribute to the asymptotic leading order grows rapidly. We
explain more about the widespread interest of the asymptotics of intersection
numbers in Section \ref{subsec:Related-work}. 

\subsection{Related work\protect\label{subsec:Related-work}}

We now put our work into context with existing literature.

\subsubsection*{Large $n$ regime}

The spectral geometric properties of surfaces in $\mathcal{M}_{g,n}$
in the $n\to\infty$ regime is not well studied. Early work by Zograf
\cite{Zo93} and Manin and Zograf \cite{Ma.Zo00} focussed on the
asymptotics of Weil-Petersson volumes of the moduli space for unbordered
surfaces which we replicate in Theorem \ref{thm:Manin-Zograf}. The
spectrum of the Laplacian has been studied by Zograf \cite{Zo1987}
where the deterministic bound (\ref{eq:zograf-lambda1}) on the first
non-zero eigenvalue was obtained, and we proved a sub-linear in $n$
version of Theorem \ref{thm:eigenvalues} in \cite{Hi.Th.22} for
random surfaces. 

Additionally, counting functions for the number of closed geodesics
whose lengths are in a shrinking window were shown to be Poisson distributed
in the $n\to\infty$ regime in \cite{Hi.Th.22} allowing for the expected
systole size and distribution of short geodesics to be studied on
Weil-Petersson random surfaces. The Bers' constant for deterministic
punctured spheres (genus zero, $n$ cusps) has also been studied.
The length of a pants decomposition $\mathcal{P}$ of $X\in\mathcal{M}_{0,n}$
is the length of the longest geodesic in $\mathcal{P}$. The Bers'
constant associated with $X$ is the shortest pants length among all
pants decompositions of $X$. The optimal Bers' constant $\mathcal{B}_{0,n}$
is the supremum of the Bers' constants associated to every surface
in $\mathcal{M}_{0,n}$. Balacheff and Parlier \cite{Ba.Pa2012} then
proved the following.
\begin{thm}
\label{thm:bers}If $X\in\mathcal{M}_{0,n}$ has Bers' constant equal
to $\mathcal{B}_{0,n}$, that is, any pants decomposition contains
a geodesic with length at least $\mathcal{B}_{0,n}$, then all simple
closed geodesics of $X$ have length strictly greater than $2\mathrm{arcsinh}(1)$.
\end{thm}

This contrasts the typical behaviour of Weil-Petersson random surfaces
with $n\to\infty,$where there are many curves on length scales $\mathrm{const}\cdot\frac{1}{\sqrt{n}}$.
In comparison with Theorem \ref{thm:eigenvalues} where the many short
curves give rise to linear in $n$ many exceptional eigenvalues, a
surface as in Theorem \ref{thm:bers} may be a candidate to exhibit
few exceptional eigenvalues.

Finally, we mention forthcoming work of Budd and Curien \cite{Bu.Cu2024}
where a connection between Weil-Petersson random surfaces in $\mathcal{M}_{0,n}$
and the scaling limits of random planar maps has been established.
Through this, the authors have investigated the local geometry of
a Weil-Petersson random surface in $\mathcal{M}_{0,n}$ by proving
pointed Benjamini-Schramm convergence to a random surface with genus
zero and countably many punctures. In addition, they show that globally,
after cutting the cusps at horocycles of fixed length and appropriately
rescaling the hyperbolic distance, the compact core converges as a
metric space in the Gromov-Hausdorff sense to a scaled Brownian sphere.
\begin{rem}
It is known that in many settings that Benjamini-Schramm convergence
implies the convergence of spectral measures of the Laplacian. For
example, in \cite{Ab.Be.Bi.Ge.Ni.Ra.Sa2017} it is shown that if $\Gamma_{n}$
are a uniformly discrete sequence of lattices in a connected semi-simple
Lie group $G$ with trivial centre and maximal compact subgroup $K$,
then Benjamini-Schramm convergence of the manifold $\Gamma_{n}\backslash G/K$
to $G/K$ implies convergence of the spectral measure to the normalised
Plancherel measure on $L^{2}(G)$. It would be interesting to investigate
the spectral measure for Weil-Petersson random surfaces in the large-$n$
regime and it is likely that precise asymptotics for Weil-Petersson
volumes as in Corollary \ref{thm:I0-asymptotic} will be helpful for
this.
\end{rem}

\subsubsection*{Large $g$ regime\protect\label{subsec:Large--regime}}

The literature for studying random surfaces in the $g\to\infty$ is
much more well established whereby the sinh approximation of (\ref{eq:sinh-approx})
and its variants have played an important role in obtaining the current
state of the art results.

The first application of (\ref{eq:sinh-approx}) was in \cite{Mi.Pe19},
where it was applied to prove Poisson statistics for the number of
closed geodesics whose lengths are in a fixed window on closed surfaces.
It was also used in investigating the size of the first non-zero eigenvalue
(spectral gap) in \cite{Wu.Xu2022,Li.Wr21} where the authors independently
prove that a Weil-Petersson random closed surface of large genus has
spectral gap at least $\frac{3}{16}-\ep$. Recently, this result was
improved to $\frac{2}{9}-\ep$ by Anantharaman and Monk \cite{An.Mo2023},
where they applied the sharpest known version of (\ref{eq:sinh-approx}),
proven in \cite{An.Mo2022}. See also the works \cite{Ma.Na.Pu2022,Ma.Na2020,Hi.Ma22}
for spectral gap results for random covers. Other applications have
been in proving delocalisation estimates on Laplacian eigenfunctions
\cite{Le.Sa17,Gi.Le.Sa.Th21,Th2020}, connecting eigenvalue distribution
to random matrix theory \cite{Ru22}, and investigating the Cheeger
constant and behaviour of the systole \cite{Pa.Wu.Xu21,Ni.Wu.Xu2023}
for closed surfaces. The sinh-approximation has also been used to
prove results of a similar flavour when $n$ grows with the genus
$g$, where the behaviour of geodesics and eigenvalues changes dependent
on the $n$ and $g$ regime \cite{Hi.22,Sh.Wu22}.

Theorem \ref{thm:I0-asymptotic}, the large-$n$ analogue of (\ref{eq:sinh-approx}),
offers an explanation for the contrasting results seen for the spectral
geometry in the large $g$ and $n$ regimes: the leading order asymptotics
in $n$ are subdominant to those in $g$ leading to shorter scale
geometry to permeate. As with (\ref{eq:sinh-approx}), we expect that
our result will be useful for investigating other spectral geometric
questions for Weil-Petersson random surfaces in addition to the results
that we prove here. These applications are of significant interest
in there own right and because of their sharp contrast to the behaviour
witnessed on large genus closed surfaces.

\subsubsection*{Asymptotics of intersection numbers}

Aside from appearing in Mirzakhani's formula for moduli space volumes
(c.f. Theorem \ref{thm:Mirz-vol-exp}) the intersection numbers $\left[\tau_{d_{1}}\cdots\tau_{d_{n}}\right]_{g,n}$
(after a suitable normalisation -- compare (\ref{eq:def-int-numbers})
to the definition in the introduction of \cite{Ag2021} for example),
play the role of correlation functions in Witten's model of two-dimensional
quantum gravity \cite{Wi1991}. In proving a conjecture of Witten
regarding the nature of the intersection numbers, Kontesevich \cite{Ko1992}
related them to the combinatorics of trivalent ribbon graphs with
genus $g$ and $n$ boundaries. Additionally, the intersection numbers
appear in formulae for frequencies of geodesic multicurves on hyperbolic
and random flat surfaces by Mirzakhani \cite{Mi2008} and Delecroix,
Goujard, Zograf and Zorich \cite{De.Go.Zo.Zo2021,De.Go.Zo.Zo2022,De.Go.Zo.Zo2023}
respectively. 

Computation of the intersection numbers can in principle be carried
out exactly using recursive relations of Witten and Kontesevich \cite{Wi1991,Ko1992}
and their connection to Virasoro constraints or the recursive relations
of Liu and Xu \cite{Li.Xu2009} (some of which we reproduce in Section
\ref{subsec:Intersection-numbers}). There are however, no explicit
closed form expressions of the intersection numbers outside of some
exceptional cases, and so it is of immense interest to obtain asymptotic
formulae for them instead in various regimes of $g$ and $n$. For
$g\to\infty$ (and with control over the growth of $n$ in terms of
$g$), there has been significant progress on these asymptotics made
by Mirzakhani and Zograf \cite{Mi.Zo2015} and Liu and Xu \cite{Li.Xu2014}.
When $\sum_{i=1}^{n}d_{i}=3g+n-3$, the exact form of the intersection
numbers in the $g\to\infty$ limit were conjectured by Delecroix,
Goujard, Zograf and Zorich \cite[Conjecture E.6]{De.Go.Zo.Zo2021}
and recently proven by Aggarwal \cite{Ag2021} (see also a special
case by Liu and Xu \cite{Li.Xu2014}). Our result, Theorem \ref{thm:int-asymptotic},
marks significant progress in understanding the asymptotics of the
intersection numbers $[\tau_{0}^{n-k}\tau_{d_{1}}\cdots\tau_{d_{k}}]_{g,n}$
in the $n\to\infty$ regime.

\subsection{Overview of the proofs of Theorems \ref{thm:I0-asymptotic} and \ref{thm:int-asymptotic}}

\label{subsec:proof-overviews}

We first outline the proof of Theorem \ref{thm:int-asymptotic} from
which Theorem \ref{thm:I0-asymptotic} follows. We use the following
recursive formula, Lemma \ref{lem:Mirzakhani-Zograf-(Ib)}, which
follows from \cite[Propositions 3.3 and 3.4]{Li.Xu2009}, c.f. \cite[eq. (Ib)]{Mi.Zo2015}.
\begin{align}
[\tau_{0}^{2}\tau_{\ell+1}\prod_{i=1}^{n}\tau_{d_{i}}]_{g,n+3}= & [\tau_{0}^{4}\tau_{\ell}\prod_{i=1}^{n}\tau_{d_{i}}]_{g-1,n+5}+8\sum_{\substack{I\sqcup J=\{1,...,n\}\\
g_{1}+g_{2}=g
}
}[\tau_{0}^{2}\tau_{\ell}\prod_{i\in I}\tau_{d_{i}}]_{g_{1},|I|+3}[\tau_{0}^{2}\prod_{i\in J}\tau_{d_{i}}]_{g_{2},|J|+2}\nonumber \\
 & +4\sum_{\substack{I\sqcup J=\{1,...,n\}\\
g_{1}+g_{2}=g
}
}[\tau_{0}\tau_{\ell}\prod_{i\in I}\tau_{d_{i}}]_{g_{1},|I|+2}[\tau_{0}^{3}\prod_{i\in J}\tau_{d_{i}}]_{g_{2},|J|+3}.\label{eq:Recurs-intro}
\end{align}
We explain the method for the simpler case where $d_{i}=0$ for $i=1,\dots,n$.
We remind the reader that $\left[\tau_{0}^{n}\right]_{g,n}=V_{g,n}$.
The starting point for our analysis is the large-$n$ asymptotic for
$V_{g,n}$ proved by Manin and Zograf, c.f. Theorem \ref{thm:Manin-Zograf},
which states that for $g\geqslant0$ fixed, 
\begin{equation}
V_{g,n}=\frac{\left(2\pi^{2}\right)^{3g+n-3}}{x_{0}^{n}}n!(n+1)^{\frac{5g-7}{2}}\left(B_{g}+O_{g}\left(\frac{1}{n}\right)\right),\label{eq:manin-zograf-intro}
\end{equation}
as $n\to\infty$, for some constant $B_{g}>0$ with $x_{0}=-\frac{1}{2}j_{0}J_{0}^{'}(j_{0})$.
Using (\ref{eq:manin-zograf-intro}), we study (\ref{eq:Recurs-intro})
inductively. By comparing each term of (\ref{eq:Recurs-intro}) with
(\ref{eq:manin-zograf-intro}), we are able to prove (e.g. Lemma \ref{lem:rec_rel})
that
\[
\frac{[\tau_{\ell+1}\tau_{0}^{n+2}]_{g,n+3}}{V_{g,n+3}}=C_{\ell+1}+O_{g}\left(\frac{l}{n^{\frac{1}{4}}}\right),
\]
where 
\begin{equation}
C_{\ell+1}\eqdf8C_{\ell}\sum_{i=0}^{\infty}\frac{V_{0,i+3}}{(2\pi^{2})^{i+1}(i+1)!}x_{0}^{i+1}+4\sum_{i=0}^{\infty}\frac{[\tau_{0}^{i+2}\tau_{\ell}]_{0,i+3}}{(2\pi^{2})^{i+1}(i+1)!}x_{0}^{i+1}.\label{eq:recurs-consts-intro}
\end{equation}
It is an important point that the constants $C_{\ell}$ involve only
genus-$0$ intersection numbers, regardless of the value of $g$.
Our goal is now to compute the constants $C_{\ell}$ explicitly. To
do this we need to compute the constant
\[
b_{\ell}\eqdf\sum_{i=0}^{\infty}\frac{[\tau_{0}^{i+2}\tau_{\ell}]_{0,i+3}}{(2\pi^{2})^{i+1}(i+1)!}x_{0}^{i+1},
\]
for each $\ell$ which then allows us to solve the recurrence relation
(\ref{eq:recurs-consts-intro}) to compute $C_{\ell}$. We consider
the generating functions
\[
\Phi_{\ell}\left(x\right)\eqdf\sum_{i=3}^{\infty}\frac{[\tau_{0}^{i-1}\tau_{\ell}]_{0,i}}{(2\pi^{2})^{i-2}i!}x^{i},
\]
noting that $b_{\ell}=\Phi_{\ell}^{''}\left(x_{0}\right)$. Crucially,
for the case that $\ell=0$, it was shown in \cite[eq. (0.8)]{Ka.Ma.Za96}
that $y(x)\eqdf\Phi_{0}^{''}\left(x\right)$ can be obtained by inverting
the function $x(y)\eqdf-\sqrt{y}J_{0}^{'}\left(2\sqrt{y}\right)$
which gives that $b_{0}=\frac{j_{0}^{2}}{4}$. This follows from a
recursive formula for genus-$0$ Weil-Petersson volumes due to Zograf
\cite{Zo93}. We observe, in Lemma \ref{lem:phi_k-rec}, that (\ref{eq:Recurs-intro})
implies that $\Phi_{\ell}\left(x\right)$ satisfies the following
family of ODE's
\begin{equation}
\Phi_{\ell}^{'''}(x)=8\Phi_{0}^{''}(x)\Phi_{\ell-1}^{'''}(x)+4\Phi_{0}^{'''}(x)\Phi_{\ell-1}^{''}(x).\label{eq:ODE's-intro}
\end{equation}
We solve (\ref{eq:ODE's-intro}) for $\Phi_{\ell}^{''}\left(x\right)$
to find that 
\[
\Phi_{\ell}^{''}(x)=\frac{2^{3\ell+1}}{\sqrt{\pi}}\frac{\Gamma(\ell+\frac{3}{2})}{\Gamma(\ell+2)}\Phi_{0}^{''}(x)^{\ell+1}.
\]
Using $b_{0}=\frac{j_{0}^{2}}{4}$, we learn that 
\[
b_{\ell}=\frac{2^{3\ell+1}}{\sqrt{\pi}}\frac{\Gamma(\ell+\frac{3}{2})}{\Gamma(\ell+2)}\left(\frac{j_{0}^{2}}{4}\right)^{\ell+1}.
\]
Solving (\ref{eq:recurs-consts-intro}) then gives 
\[
C_{\ell}=\left(\frac{j_{0}}{\pi}\right)^{2\ell}\frac{(2\ell+1)}{\sqrt{\pi}}\frac{\Gamma(\ell+\frac{1}{2})}{\Gamma(\ell+1)}.
\]
After some more technical computations, the general case of $[\tau_{0}^{2}\tau_{\ell+1}\prod_{i=1}^{n-3}\tau_{d_{i}}]_{g,n}$
for $d_{1},\dots,d_{k}>0$ essentially reduces to the above considerations.

Finally, since by \cite[Theorem 1.1]{Mi07a}, 
\[
\text{\ensuremath{V_{g,n}\left(2x_{1},\dots,2x_{n}\right)}}=\sum_{\substack{d_{1},\dots,d_{n}\\
|d|\leqslant3g+n-3
}
}\left[\prod_{i=1}^{n}\tau_{d_{i}}\right]_{g,n}\frac{x_{1}^{2d_{1}}}{\left(2d_{1}+1\right)!}\cdot\dots\cdot\frac{x_{n}^{2d_{n}}}{\left(2d_{n}+1\right)!},
\]
Theorem \ref{thm:I0-asymptotic} follows from Theorem \ref{thm:int-asymptotic}
by recognising the Taylor expansion for $\prod_{i=1}^{k}I_{0}(x_{i})$,
where we recall 
\[
I_{0}(x)=\sum_{d=0}^{\infty}\left(\frac{x}{2}\right)^{2d}\frac{1}{(d!)^{2}}.
\]

\subsection{Outline of the paper}

In Section \ref{sec:Background} we will gather together the definitions
of the objects we are interested in studying along with some results
from the literature that we shall use throughout the article. Section
\ref{sec:Preliminary-Volume-Computations} proves some preliminary
results involving the moduli space volumes that we will frequently
apply when computing the asymptotics of the intersection numbers.
Mostly these results are technical and their proofs can be skipped
over without loss for reading the remainder of the paper. In Section
\ref{sec:Asymptotics-of-Intersection} we prove Theorems \ref{thm:int-asymptotic}
and \ref{thm:I0-asymptotic} starting with some special cases that
lead into an inductive argument later in the section. The remaining
two sections are devoted to proving the respective applications outlined
previously.

\subsection{Notation}

For real valued functions $f,h$ depending on a parameter $n$ we
write $f\ll h$ or $f=O\left(h\right)$ if there exists $C,N>0$ such
that $|f(n)|\leqslant Ch(n)$ for all $g>N$. We add subscripts to
the $\ll$ sign if the constant $C,N$ depend on another variable.
E.g. we write $f\ll_{\epsilon}h$ if there exists $C=C(\ep),N=N(\ep)$
such that $|f(n)|\leqslant Ch(n)\text{ for all }g>N$. We write $f\sim h$
if $f\ll h$ and $h\ll f$.

\section{Background\protect\label{sec:Background}}

In this section we briefly introduce the necessary background and
state some results we will use later in the paper. 

\subsection{Moduli space }

Let $\Sigma_{g,n,b}$ denote a topological surface with genus $g$,
$n$ labelled punctures and $b$ labelled boundary components where
$2g+n+b\geqslant3$. A marked surface of signature $\left(g,n,b\right)$
is a pair $\left(X,\varphi\right)$ where $X$ is a hyperbolic surface
and $\varphi:\Sigma_{g,n,b}\to X$ is a homeomorphism. Given $\left(\ell_{1},...,\ell_{b}\right)\in\mathbb{R}_{>0}^{b}$,
we define the Teichmüller space $\mathcal{T}_{g,n,b}\left(\ell_{1},\dots,\ell_{b}\right)$
by
\begin{align*}
\mathcal{T}_{g,n,b}\left(\ell_{1},...,\ell_{b}\right) & \stackrel{\text{def}}{=}\left\{ \substack{\text{\text{Marked surfaces} }\left(X,\varphi\right)\text{ of signature }\left(g,n,b\right)\\
\text{with labelled totally geodesic boundary components }\\
\left(\beta_{1},\dots,\beta_{b}\right)\text{ with lengths }\left(\ell_{1},\dots,\ell_{b}\right)
}
\right\} /\sim,
\end{align*}
where $\left(X_{1},\varphi_{1}\right)\sim\left(X_{2},\varphi_{2}\right)$
if and only if there exists an isometry $m:X_{1}\to X_{2}$ such that
$\varphi_{2}$ and $m\circ\varphi_{1}$ are isotopic. Let $\text{Homeo}^{+}\left(\Sigma_{g,n,b}\right)$
denote the group of orientation preserving homeomorphisms of $\Sigma_{g,c,d}$
which leave every boundary component set-wise fixed and do not permute
the punctures. Let $\text{Homeo}_{0}^{+}\left(\Sigma_{g,n,b}\right)$
denote the subgroup of homeomorphisms isotopic to the identity. The
mapping class group is defined as 
\[
\text{MCG}_{g,n,b}\stackrel{\text{def}}{=}\text{Homeo}^{+}\left(\Sigma_{g,n,b}\right)/\text{Homeo}_{0}^{+}\left(\Sigma_{g,n,b}\right).
\]
$\text{Homeo}^{+}\left(\Sigma_{g,n,b}\right)$ acts on $\mathcal{T}_{g,n,b}\left(\ell_{1},...,\ell_{b}\right)$
by pre-composition of the marking and we define the moduli space $\mathcal{M}_{g,n,b}\left(\ell_{1},...,\ell_{b}\right)$
by 
\[
\mathcal{M}_{g,n,b}\left(\ell_{1},...,\ell_{b}\right)\stackrel{\text{def}}{=}\mathcal{T}_{g,n,b}\left(\ell_{1},...,\ell_{b}\right)/\text{MCG}_{g,n,b}.
\]
By convention, a geodesic of length $0$ is a cusp and we suppress
the distinction between cusps and boundary components in our notation
by allowing $\ell_{i}\geqslant0$. In particular we write

\[
\mathcal{M}_{g,n+b}=\mathcal{M}_{g,n,b}\left(0,\dots,0\right).
\]

\subsection{\protect\label{subsec:Intersection-numbers}Intersection numbers
and Weil-Petersson volumes}

Let $\omega_{WP}$ denote the Weil-Petersson symplectic form on $\mathcal{T}_{g,n}\left(\ell_{1},\dots,\ell_{n}\right)$.
$\omega_{WP}$ is invariant under the action of the mapping class
group \cite{Go1984} and descends to a symplectic form on $\mathcal{M}_{g,n}\left(\ell_{1},\dots,\ell_{n}\right)$
where it induces the volume form
\[
\text{dVol}_{WP}\stackrel{\text{def}}{=}\frac{1}{\left(3g-3+n\right)!}\bigwedge_{i=1}^{3g-3+n}\omega_{WP}.
\]
We write $V_{g,n}\left(\ell_{1},\dots,\ell_{n}\right)$ to denote
$\text{Vol}_{WP}\left(\mathcal{M}_{g,n}\left(\ell_{1},\dots,\ell_{n}\right)\right)$,
which is finite. We define the Weil-Petersson probability measure
$\mathbb{P}_{g,n}$ on $\mathcal{M}_{g,n}$ by normalising $\text{dVol}_{WP}$.

Let $\overline{\mathcal{M}}_{g,n}$ be the Deligne-Mumford compactification
of $\mathcal{M}_{g,n}$. There are $n$ tautological line bundles
$\mathcal{L}_{i}$ over $\overline{\mathcal{M}}_{g,n}$ whose fiber
at $X\in\overline{\mathcal{M}}_{g,n}$ is the cotangent space at the
$i$th marked point on $X$. We define the $\psi$-classes $\psi_{i}\eqdf c_{1}\left(\mathcal{L}_{i}\right)$
where $c_{1}$ denotes the first Chern class of the bundle $\mathcal{L}_{i}$.
For $d=\left(d_{1},\dots,d_{n}\right)\in\mathbb{Z}_{\geqslant0}^{n}$
with $\left|d\right|\eqdf\sum_{i=1}^{n}d_{i}\leqslant3g+n-3$, we
define 
\begin{equation}
\left[\tau_{d_{1}}\dots\tau_{d_{n}}\right]_{g,n}\eqdf\frac{2^{2\left|d\right|}\prod_{i=1}^{n}\left(2d_{i}+1\right)!!}{(3g+n-3-|d|)!}\int_{\overline{\mathcal{M}}_{g,n}}\psi_{1}^{d_{1}}\cdots\psi_{n}^{d_{n}}\omega_{WP}^{3g+n-3-\left|d\right|}.\label{eq:def-int-numbers}
\end{equation}
If $|d|>3g+n-3$ , $\left[\tau_{d_{1}}\dots\tau_{d_{n}}\right]_{g,n}$
is taken to be identically $0$. For further background, see \cite{Ar.Co1996}.
Mirzakhani proved that $V_{g,n}\left(\ell_{1},\dots,\ell_{n}\right)$
is a polynomial in $\ell_{1},\dots,\ell_{n}$ with coefficients given
by the intersection numbers $\left[\tau_{d_{1}}\dots\tau_{d_{n}}\right]_{g,n}$.
\begin{thm}[{\cite[Theorem 1.1]{Mi07a}}]
\label{thm:Mirz-vol-exp}For $n>0$ and $\ell_{1},\dots,\ell_{n}>0$,
\[
\text{\ensuremath{V_{g,n}\left(2\ell_{1},\dots,2\ell_{n}\right)}}=\sum_{\substack{d_{1},\dots,d_{n}\\
|d|\leqslant3g+n-3
}
}\left[\prod_{i=1}^{n}\tau_{d_{i}}\right]_{g,n}\frac{\ell_{1}^{2d_{1}}}{\left(2d_{1}+1\right)!}\cdot\dots\cdot\frac{\ell_{n}^{2d_{n}}}{\left(2d_{n}+1\right)!}.
\]
\end{thm}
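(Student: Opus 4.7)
The plan is to reduce the claim to a single cohomological identity on the Deligne--Mumford compactification $\overline{\mathcal{M}}_{g,n}$: namely, that under the natural identification of $\mathcal{M}_{g,n}(L_1,\ldots,L_n)$ with $\mathcal{M}_{g,n}$ as smooth orbifolds, the class of the Weil--Petersson symplectic form satisfies
\[
[\omega_{WP}(L_1,\ldots,L_n)] \;=\; [\omega_{WP}] + \sum_{i=1}^{n}\frac{L_i^2}{2}\psi_i.
\]
Granting this, the volume becomes
\[
V_{g,n}(L_1,\ldots,L_n) \;=\; \frac{1}{(3g-3+n)!}\int_{\overline{\mathcal{M}}_{g,n}}\Bigl(\omega_{WP} + \sum_{i=1}^{n}\tfrac{L_i^2}{2}\psi_i\Bigr)^{3g-3+n},
\]
and expanding by the multinomial theorem, setting $L_i=2\ell_i$, and using the elementary identity $\tfrac{2^{d_i}}{d_i!} = \tfrac{2^{2d_i}(2d_i+1)!!}{(2d_i+1)!}$ (equivalent to $(2d_i+1)! = (2d_i+1)!!\,(2d_i)!!$) rearranges the expansion into exactly the stated formula once the normalization of intersection numbers in (\ref{eq:def-int-numbers}) is applied.

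The cohomological identity itself would be established by symplectic reduction. I would introduce a ``decorated'' moduli space $\widetilde{\mathcal{M}}_{g,n}$ parametrising bordered hyperbolic surfaces together with one marked point on each boundary component. This space carries a natural symplectic form and a Hamiltonian $U(1)^n$-action rotating the marked points around the boundaries; the associated moment map $\mu$ sends a surface to $(\tfrac{L_1^2}{2},\ldots,\tfrac{L_n^2}{2})$, where $L_i$ is the length of the $i$-th boundary. The symplectic reduction $\mu^{-1}(\tfrac{L_1^2}{2},\ldots,\tfrac{L_n^2}{2})/U(1)^n$ is canonically identified with $\mathcal{M}_{g,n}(L_1,\ldots,L_n)$. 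The Duistermaat--Heckman theorem then gives that the class of the reduced symplectic form varies linearly in the moment-map values, with slopes the first Chern classes of the principal $S^1$-bundles over the reduced spaces. These Chern classes must be identified with the $\psi$-classes: geometrically, as $L_i \to 0$ the circle of possible basepoints on the $i$-th boundary degenerates to the circle of unit tangent directions at the emerging cusp, whose associated line bundle is precisely $\mathcal{L}_i$.

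The principal technical obstacle is extending the identity from the open locus to the full compactification, since classical Duistermaat--Heckman lives on the smooth open stratum whereas the intersection numbers in (\ref{eq:def-int-numbers}) are defined on $\overline{\mathcal{M}}_{g,n}$. Handling this requires careful control of the behaviour of $\omega_{WP}$ and of the $U(1)^n$-action near the nodal strata, for which the natural tool is Wolpert's description of $\omega_{WP}$ in Fenchel--Nielsen coordinates together with a degeneration argument showing that both sides extend continuously (and in the same class) across the boundary divisors of $\overline{\mathcal{M}}_{g,n}$. Once that extension is in hand, all remaining steps, including the final multinomial expansion and the bookkeeping with double factorials, are essentially formal.
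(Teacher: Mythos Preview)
The paper does not prove this theorem at all: it is quoted as background from Mirzakhani \cite{Mi07a} and used as a black box, so there is no in-paper argument to compare your proposal against. That said, your outline is correct and is precisely Mirzakhani's original strategy in \cite{Mi07a}: construct the decorated moduli space with its Hamiltonian $U(1)^n$-action, identify the reduced spaces with $\mathcal{M}_{g,n}(L_1,\ldots,L_n)$, apply Duistermaat--Heckman to obtain $[\omega_{WP}(L)] = [\omega_{WP}] + \sum_i \tfrac{L_i^2}{2}\psi_i$, and then expand; your bookkeeping with the normalization (\ref{eq:def-int-numbers}) and the identity $2^{d_i}d_i!\,(2d_i+1)!! = (2d_i+1)!$ is also correct.
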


By Theorem \ref{thm:Mirz-vol-exp}, we can study the asymptotics of
$\frac{V_{g,n}\left(\ell_{1},\dots,\ell_{n}\right)}{V_{g,n}}$ by
understanding the asymptotics of $\frac{\left[\prod_{i=1}^{n}\tau_{d_{i}}\right]_{g,n}}{V_{g,n}}$.
To this end, we rely heavily on the following recursive formula which
follows from \cite[Propositions 3.3 and 3.4]{Li.Xu2009}, c.f. \cite[eq. (Ib)]{Mi.Zo2015}.
\begin{lem}[\cite{Li.Xu2009}]
\label{lem:Mirzakhani-Zograf-(Ib)}
\begin{align*}
[\tau_{0}^{2}\tau_{\ell+1}\prod_{i=1}^{n}\tau_{d_{i}}]_{g,n+3} & =[\tau_{0}^{4}\tau_{\ell}\prod_{i=1}^{n}\tau_{d_{i}}]_{g-1,n+5}+8\sum_{\substack{I\sqcup J=\{1,...,n\}\\
g_{1}+g_{2}=g
}
}[\tau_{0}^{2}\tau_{\ell}\prod_{i\in I}\tau_{d_{i}}]_{g_{1},|I|+3}[\tau_{0}^{2}\prod_{i\in J}\tau_{d_{i}}]_{g_{2},|J|+2}\\
 & +4\sum_{\substack{I\sqcup J=\{1,...,n\}\\
g_{1}+g_{2}=g
}
}[\tau_{0}\tau_{\ell}\prod_{i\in I}\tau_{d_{i}}]_{g_{1},|I|+2}[\tau_{0}^{3}\prod_{i\in J}\tau_{d_{i}}]_{g_{2},|J|+3}.
\end{align*}
\end{lem}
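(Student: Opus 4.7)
The plan is to derive the recursion from the Witten--Kontsevich/Virasoro framework, using the identification $\omega_{WP}=2\pi^{2}\kappa_{1}$ on $\overline{\mathcal{M}}_{g,n}$ together with the Manin--Zograf/Kaufmann--Manin--Zagier trick of expressing $\kappa_{1}$ as a pushforward of $\psi^{2}$. Concretely, if $\pi:\overline{\mathcal{M}}_{g,n+1}\to\overline{\mathcal{M}}_{g,n}$ is the universal curve, then $\pi_{*}(\psi_{n+1}^{a+1})=\kappa_{a}$, so after expanding the power of $\omega_{WP}^{3g+n-3-|d|}$ in $(\ref{eq:def-int-numbers})$ one rewrites $[\tau_{d_{1}}\cdots\tau_{d_{n}}]_{g,n}$ as a (finite, combinatorially explicit) linear combination of pure $\psi$-class integrals $\langle\tau_{e_{1}}\cdots\tau_{e_{m}}\rangle_{g}$ on higher-punctured moduli spaces. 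Up to the rescaling factor $\frac{2^{2|d|}\prod(2d_{i}+1)!!}{(3g+n-3-|d|)!}$, everything is governed by the KdV $\tau$-function.

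With this dictionary in place, the core input is the Virasoro constraint $L_{\ell}\cdot Z=0$ applied to $\tau_{\ell+1}$, equivalent to the DVV/topological recursion
\[
\langle\tau_{\ell+1}\prod_{i}\tau_{d_{i}}\rangle_{g}=\text{(irreducible boundary)}+\text{(reducible boundary)},
\]
where the irreducible term is the genus-dropping contribution $\langle\tau_{\ell}\tau_{0}^{2}\prod_{i}\tau_{d_{i}}\rangle_{g-1}$ and the reducible terms are sums over splittings $I\sqcup J=\{1,\dots,n\}$ and $g_{1}+g_{2}=g$ of products of two lower-complexity intersection numbers. I would apply this identity to the left-hand side after the $\kappa_{1}\leftrightarrow\pi_{*}\psi^{2}$ expansion, and regroup the resulting pure-$\psi$ integrals back into the normalized brackets $[\,\cdot\,]_{g_{j},\cdot}$ appearing on the right-hand side. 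The constants $8$ and $4$ arise purely from the renormalization: the factor $2^{2|d|}$ doubles when $\tau_{\ell+1}\mapsto\tau_{\ell}$ (since $|d|$ increases by one in each piece after the split), the double factorial $(2\ell+1)!!=(2\ell+1)(2\ell-1)!!$ produces a factor of $(2\ell+1)$, and similarly the denominators $(3g+n-3-|d|)!$ rearrange into $|I|!|J|!$-style binomial coefficients that cancel against the sum over $I\sqcup J$.

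The $\tau_{0}$-prefactors on both sides are accommodated using the string equation
\[
\langle\tau_{0}\prod_{i}\tau_{d_{i}}\rangle_{g}=\sum_{j:\,d_{j}\geq 1}\langle\tau_{d_{j}-1}\prod_{i\neq j}\tau_{d_{i}}\rangle_{g},
\]
together with the fact that in the normalization of $(\ref{eq:def-int-numbers})$ the insertion of a $\tau_{0}$ corresponds to integrating against $\omega_{WP}$ one more time on a moduli space with one extra cusp, which is compatible with the forgetful-map description. One then verifies that both sides of the claimed identity, after being expanded into pure-$\psi$ intersection numbers and the $\kappa_{1}$-rewrites carried out, agree term by term with the DVV recursion applied to $\tau_{\ell+1}$.

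The main obstacle is the bookkeeping of combinatorial prefactors: keeping track of the powers of $2$, the double factorials, the $(3g+n-3-|d|)!$ denominators, and the symmetry factors appearing in the reducible splittings, so that the Virasoro/DVV recursion lands precisely on the claimed form with constants $8$ and $4$. Once this accounting is done carefully (and the $\tau_{0}^{k}$ prefactors are interpreted consistently via the string equation), no further geometric input is required — the identity follows formally from the KdV/Virasoro constraints, which is exactly the viewpoint adopted in \cite{Li.Xu2009}.
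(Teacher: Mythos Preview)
The paper does not prove this lemma: it is quoted as a known result, attributed to \cite[Propositions~3.3 and~3.4]{Li.Xu2009} and cross-referenced with \cite[Equation~Ib]{Mi.Zo2015}. So there is no ``paper's own proof'' to compare against.

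Your sketch is in the right spirit and broadly matches the approach of the cited references: one does ultimately reduce to Virasoro/KdV constraints after trading the $\omega_{WP}$-powers for $\kappa_{1}$-insertions via $\omega_{WP}=2\pi^{2}\kappa_{1}$, and then for $\psi$-integrals via $\kappa_{a}=\pi_{*}\psi^{a+1}$. That said, two points deserve caution. First, your description of the DVV recursion is oversimplified: the genuine DVV relation for $\tau_{\ell+1}$ involves a sum $\sum_{a+b=\ell-1}$ in the boundary terms and index-shifting contributions $\sum_{j}(2d_{j}+1)\langle\tau_{\ell}\tau_{d_{j}+1}\cdots\rangle$, not just a single $\langle\tau_{\ell}\tau_{0}^{2}\cdots\rangle_{g-1}$ as you wrote. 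The specific $\tau_{0}^{2},\tau_{0}^{3},\tau_{0}^{4}$ structure of the stated identity, with no residual index sums, comes from combining \emph{two} of Liu--Xu's recursions (their Propositions~3.3 and~3.4) rather than from a single application of DVV. Second, you correctly identify the bookkeeping of $2^{2|d|}$, double factorials, and $(3g+n-3-|d|)!$ as the ``main obstacle,'' but you do not actually carry it out; verifying that the constants come out to exactly $8$ and $4$ is the entire content here, and your heuristic (``$|d|$ increases by one in each piece after the split'') is not literally correct for the reducible terms, where $|d|$ is distributed between the two factors. As written, this is a plan rather than a proof.
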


We also record a basic estimate for $\left[\tau_{d_{1}},\dots,\tau_{d_{n}}\right]_{g,n}$
in terms of Weil-Petersson volumes.
\begin{lem}[{\cite[Lemma 3.2, Part (1)]{Mi2013}}]
\label{lem:intersection-triv-bound}For any $d_{1},\dots d_{n}$,
\[
\left[\tau_{d_{1}},\dots,\tau_{d_{n}}\right]_{g,n}\leqslant V_{g,n}.
\]
\end{lem}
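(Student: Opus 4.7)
The case $|d|=0$ is immediate from the definition~(\ref{eq:def-int-numbers}): setting $d_i=0$ for all $i$, the combinatorial factor $\frac{2^{2|d|}\prod(2d_i+1)!!}{(3g+n-3-|d|)!}$ collapses to $\frac{1}{(3g+n-3)!}$, giving $[\tau_0^n]_{g,n}=V_{g,n}$ exactly. When $|d|>3g+n-3$ the intersection number vanishes by convention, so only the range $1\leq|d|\leq 3g+n-3$ requires argument.

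My plan for this range is to rewrite the intersection number in a form directly comparable to $V_{g,n}$. Using Mirzakhani's extension of Wolpert's formula, the Weil--Petersson class on $\overline{\mathcal{M}}_{g,n}(L)$ decomposes as $[\omega_{WP}]=2\pi^{2}\kappa_{1}+\tfrac{1}{2}\sum_{i=1}^{n}L_{i}^{2}\psi_{i}$, where $\kappa_{1}$ is the first Mumford class. Substituting into $V_{g,n}(L)=\frac{1}{(3g+n-3)!}\int[\omega_{WP}]^{3g+n-3}$, expanding by the multinomial theorem, and equating coefficients of $\prod L_{i}^{2d_{i}}$ with those in Theorem~\ref{thm:Mirz-vol-exp} gives
\[
[\tau_{d_{1}}\cdots\tau_{d_{n}}]_{g,n}=\frac{2^{2|d|}\prod_{i}(2d_{i}+1)!!\,(2\pi^{2})^{N-|d|}}{(N-|d|)!}\int_{\overline{\mathcal{M}}_{g,n}}\kappa_{1}^{N-|d|}\prod_{i=1}^{n}\psi_{i}^{d_{i}},
\]
with $N=3g+n-3$, while the same formula at $L=0$ yields $V_{g,n}=\frac{(2\pi^{2})^{N}}{N!}\int_{\overline{\mathcal{M}}_{g,n}}\kappa_{1}^{N}$. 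The claimed inequality therefore reduces to a numerical bound on a ratio of mixed $\kappa_{1}$--$\psi$ intersection integrals, all of which are non-negative by the standard positivity of these classes on $\overline{\mathcal{M}}_{g,n}$.

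The hard part is verifying this ratio bound. One route is to iteratively apply the pushforward identity $\kappa_{1}=\pi_{*}(\psi_{n+1}^{2})$, where $\pi:\overline{\mathcal{M}}_{g,n+1}\to\overline{\mathcal{M}}_{g,n}$ is the forgetful map, so as to rewrite both $\int\kappa_{1}^{N-|d|}\prod\psi_{i}^{d_{i}}$ and $\int\kappa_{1}^{N}$ as integrals on a common larger moduli space involving only $\psi$-monomials; the inequality then reduces to a purely combinatorial comparison of pure $\psi$-class integrals. A more elementary alternative is induction on $|d|$ using the recursive formula of Lemma~\ref{lem:Mirzakhani-Zograf-(Ib)}, which expresses higher-degree intersection numbers as sums of products of strictly lower-degree ones with explicit positive coefficients: the base case $|d|=0$ then propagates the bound through the recursion. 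This inductive route mirrors Mirzakhani's general strategy and is likely the intended proof, since it avoids invoking the full cohomological apparatus on $\overline{\mathcal{M}}_{g,n}$ and uses only the recursive structure already adopted throughout the paper.
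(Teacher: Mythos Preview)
The paper does not prove this lemma at all; it is simply quoted from \cite[Lemma~3.2(1)]{Mi2013}. So there is no ``paper's own proof'' to compare against, and your task was really to supply one.

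What you have written is a sketch rather than a proof, and both of the two ``routes'' you propose are left incomplete at the crucial step. Route~(a) ends with the assertion that the inequality ``reduces to a purely combinatorial comparison of pure $\psi$-class integrals'', but you never say what that comparison is or why it holds; the reduction via $\kappa_{1}=\pi_{*}(\psi_{n+1}^{2})$ is correct but produces nontrivial combinatorics (pullbacks of $\psi$-classes pick up boundary corrections under the forgetful map), and it is not at all obvious that the resulting inequality between pure $\psi$-integrals is easier than the original statement.

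Route~(b) has a more concrete gap. The recursion of Lemma~\ref{lem:Mirzakhani-Zograf-(Ib)} expresses $[\tau_{0}^{2}\tau_{\ell+1}\prod\tau_{d_{i}}]_{g,n+3}$ as a positive combination of products of intersection numbers of lower complexity. If you feed in the inductive bound on the right-hand side you obtain a positive combination of products of volumes, but you then need this combination to be $\leqslant V_{g,n+3}$. The same recursion with $\ell=0$ and all $d_{i}=0$ gives $[\tau_{0}^{n+2}\tau_{1}]_{g,n+3}$ on the left, \emph{not} $V_{g,n+3}$, so there is no automatic matching of the two sides. Closing this would require invoking a separate volume recursion and comparing coefficients, which you do not do.

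For reference, Mirzakhani's actual argument is shorter and different in spirit: she proves the monotonicity $[\tau_{d_{1}+1}\tau_{d_{2}}\cdots\tau_{d_{n}}]_{g,n}\leqslant[\tau_{d_{1}}\tau_{d_{2}}\cdots\tau_{d_{n}}]_{g,n}$ for each index directly, using a one-step recursion (coming from Mirzakhani's volume recursion rather than the formula of Lemma~\ref{lem:Mirzakhani-Zograf-(Ib)}) whose coefficients $a_{i}=\zeta(2i)(1-2^{1-2i})$ satisfy $\sum_{i\geqslant0}a_{i}\leqslant1$. Iterating the monotonicity down to $d=0$ gives the claim in one line. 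Your inductive instinct is sound, but you are inducting with the wrong recursion.
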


We shall make essential use of the following large-$n$ asymptotic
formula for $V_{g,n}$ due to Manin and Zograf \cite{Ma.Zo00}.
\begin{thm}[{\cite[Theorem 6.1]{Ma.Zo00}}]
\label{thm:Manin-Zograf} For any $g\geqslant0$, there exists a
constant $B_{g}>0$ such that
\[
V_{g,n}=\frac{\left(2\pi^{2}\right)^{3g+n-3}}{x_{0}^{n}}n!(n+1)^{\frac{5g-7}{2}}\left(B_{g}+O_{g}\left(\frac{1}{n}\right)\right),
\]
where $x_{0}\eqdf-\frac{1}{2}j_{0}J_{0}^{'}(j_{0})$ and $J_{0}$
is the Bessel function of the first kind with $j_{0}$ its first positive
zero.
\end{thm}

The proof of Theorem \ref{thm:int-asymptotic} relies on analysing
the recursion of Lemma \ref{lem:Mirzakhani-Zograf-(Ib)} together
with Theorem \ref{thm:Manin-Zograf}. 

Finally we will often apply the following trivial upper bound for
$V_{g,n}\left(\ell_{1},\dots,\ell_{n}\right)$.
\begin{lem}
\label{lem:sinh-upper-bound}For any $g,n\geqslant0$ with $2g+n>2$
and $\ell_{1},\dots,\ell_{n}\geqslant0$ 
\[
\frac{V_{g,n}\left(\ell_{1},\dots,\ell_{n}\right)}{V_{g,n}}\leqslant\prod_{i=1}^{n}\frac{\sinh\left(\frac{\ell_{i}}{2}\right)}{\left(\frac{\ell_{i}}{2}\right)}.
\]
\end{lem}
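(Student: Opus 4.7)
The plan is to deduce the bound directly from Mirzakhani's polynomial expansion (Theorem \ref{thm:Mirz-vol-exp}) together with the trivial bound on intersection numbers from Lemma \ref{lem:intersection-triv-bound}.

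First I would substitute $\ell_i = x_i/2$ into Theorem \ref{thm:Mirz-vol-exp} to write
\[
V_{g,n}(x_1,\ldots,x_n) = \sum_{\substack{d_1,\ldots,d_n \geq 0 \\ |d| \leq 3g+n-3}} \left[\prod_{i=1}^n \tau_{d_i}\right]_{g,n} \prod_{i=1}^n \frac{(x_i/2)^{2d_i}}{(2d_i+1)!}.
\]
Since each intersection number is nonnegative (this is implicit in Lemma \ref{lem:intersection-triv-bound}, and follows from the geometric definition in \eqref{eq:def-int-numbers}) and each monomial $\prod (x_i/2)^{2d_i}/(2d_i+1)!$ is nonnegative, I can apply Lemma \ref{lem:intersection-triv-bound} termwise and extend the truncated sum over $|d|\leq 3g+n-3$ to the full sum over all $(d_1,\ldots,d_n) \in \mathbb{Z}_{\geq 0}^n$ to obtain the upper bound
\[
V_{g,n}(x_1,\ldots,x_n) \leq V_{g,n} \sum_{d_1,\ldots,d_n \geq 0}\prod_{i=1}^n \frac{(x_i/2)^{2d_i}}{(2d_i+1)!}.
\]

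Next I would factor the right-hand sum as a product over $i$ of one-variable series and recognize the Taylor expansion
\[
\sum_{d\geq 0} \frac{y^{2d}}{(2d+1)!} = \frac{\sinh(y)}{y},
\]
applied with $y = x_i/2$ (extended by continuity to $y=0$). Putting these together yields
\[
\frac{V_{g,n}(x_1,\ldots,x_n)}{V_{g,n}} \leq \prod_{i=1}^n \frac{\sinh(x_i/2)}{x_i/2},
\]
as desired. There is no genuine obstacle here; the only mild point of care is justifying nonnegativity of the intersection numbers so that termwise majorization and extension of the summation range are valid.
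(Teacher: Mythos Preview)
Your proof is correct and follows exactly the approach indicated in the paper: apply Theorem~\ref{thm:Mirz-vol-exp} with $\ell_i = x_i/2$, bound each intersection number by $V_{g,n}$ via Lemma~\ref{lem:intersection-triv-bound}, extend the sum, and recognise the Taylor series of $\sinh(y)/y$. The paper gives precisely this sketch (citing \cite[Proposition~3.1]{Mi.Pe19}) without writing out the details.
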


This follows from Theorem \ref{thm:Mirz-vol-exp} and Lemma \ref{lem:intersection-triv-bound}
by the Taylor expansion of $\prod_{i=1}^{n}\frac{\sinh\left(\frac{\ell_{i}}{2}\right)}{\left(\frac{\ell_{i}}{2}\right)}$,
e.g. \cite[Proposition 3.1]{Mi.Pe19}.

\subsection{Mirzakhani's integration formula}

Finally, we give a brief account of Mirzakhani's integration formula
which will used in our applications.

We define a $k$-multicurve to be an ordered $k$-tuple $\Gamma=\left(\gamma_{1},...,\gamma_{k}\right)$
of disjoint non-homotopic non-peripheral simple closed curves on $\Sigma_{g,n}$,
and we write $\left[\Gamma\right]$$=\left[\gamma_{1},...,\gamma_{k}\right]$
to denote its homotopy class. The mapping class group $\text{MCG}_{g,n}$
acts on homotopy classes of multicurves and we denote the orbit containing
$\left[\Gamma\right]$ by 
\[
\mathcal{O}_{\Gamma}=\left\{ \left(\sigma\cdot\gamma_{1},...,\sigma\cdot\gamma_{k}\right)\mid\sigma\in\text{MCG}_{g,n}\right\} .
\]
Given a simple, non-peripheral closed curve $\gamma$ on $\Sigma_{g,n}$,
for $\left(X,\varphi\right)\in\mathcal{T}_{g,n}$ we define $\ell_{\gamma}\left(X\right)$
to be the length of the unique geodesic in the free homotopy class
of $\varphi\left(\gamma\right)$. Then given a function $f:\mathbb{R}_{\geqslant0}^{k}\to\mathbb{R}_{\geqslant0}$,
for $X\in\mathcal{M}_{g,n}$ we define 
\[
f^{\Gamma}\left(X\right)\eqdf\sum_{\left(\alpha_{1},...,\alpha_{k}\right)\in\mathcal{O}_{\Gamma}}f\left(\ell_{\alpha_{1}}\left(X\right),...,\ell_{\alpha_{k}}\left(X\right)\right).
\]
Let $\Sigma_{g,n}\left(\Gamma\right)$ denote the result of cutting
the surface $\Sigma_{g,n}$ along $\left(\gamma_{1},...,\gamma_{k}\right)$,
then $\Sigma_{g,n}\left(\Gamma\right)=\sqcup_{i=1}^{s}\Sigma_{g_{i},c_{i},d_{i}}$
for some $\left\{ \left(c_{i},d_{i}\right)\right\} _{i=1}^{s}$. Each
$\gamma_{i}$ gives rise to two boundary components $\gamma_{i}^{1}$
and $\gamma_{i}^{2}$ of $\Sigma_{g,n}\left(\Gamma\right)$. Given
$\boldsymbol{x}=\left(x_{1},...,x_{k}\right)$, let $\boldsymbol{x}^{(i)}$
denote the tuple of coordinates $x_{j}$ of $\boldsymbol{x}$ such
that $\gamma_{j}$ is a boundary component of $\Sigma_{g,c_{i},d_{i}}$.
We define 
\begin{align*}
V_{g,n}\left(\Gamma,\mathbf{x}\right)\stackrel{\text{def}}{=} & \prod_{i=1}^{s}V_{g_{i},c_{i}+d_{i}}\left(\boldsymbol{x}^{(i)}\right).
\end{align*}
We can now state Mirzakhani's integration formula.
\begin{thm}[{Mirzakhani's Integration Formula \cite[Theorem 7.1]{Mi2007}}]
\label{thm:MIF}Given a $k$-multicurve $\Gamma=\left(\gamma_{1},\dots,\gamma_{k}\right)$,
\[
\int_{\mathcal{M}_{g,n}}f^{\Gamma}\left(X\right)dX=C\left(\Gamma\right)\int_{\mathbb{R}_{\geqslant0}^{k}}f\left(x_{1},...,x_{k}\right)V_{g,n}\left(\Gamma,\mathbf{x}\right)x_{1}\cdots x_{k}\mathrm{d}x_{1}\cdots\mathrm{d}x_{k},
\]
where $C\left(\Gamma\right)\leqslant1$ is an explicit constant.
\end{thm}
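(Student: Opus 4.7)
The plan is to prove the formula via an unfolding argument on a covering of $\mathcal{M}_{g,n}$ associated to the orbit $\mathcal{O}_{\Gamma}$, followed by an explicit computation in Fenchel--Nielsen coordinates adapted to $\Gamma$ using Wolpert's formula. First I would introduce the moduli space $\mathcal{M}_{g,n}^{\Gamma}$ of pairs $(X,[\alpha])$ with $X\in\mathcal{M}_{g,n}$ and $[\alpha]\in\mathcal{O}_{\Gamma}$, realised as $\mathcal{T}_{g,n}/\mathrm{Stab}(\Gamma)$ where $\mathrm{Stab}(\Gamma)\subset\mathrm{MCG}_{g,n}$ is the stabiliser of the ordered multicurve. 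The natural projection $\pi:\mathcal{M}_{g,n}^{\Gamma}\to\mathcal{M}_{g,n}$ is an orbifold covering whose fibre over a generic $X$ can be identified with $\mathcal{O}_{\Gamma}$, so pulling back the Weil--Petersson volume form yields the tautological unfolding identity
\[
\int_{\mathcal{M}_{g,n}}f^{\Gamma}(X)\,dX=\int_{\mathcal{M}_{g,n}^{\Gamma}}f\left(\ell_{\alpha_{1}}(X),\ldots,\ell_{\alpha_{k}}(X)\right)dX.
\]

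Next, I would parametrise $\mathcal{M}_{g,n}^{\Gamma}$ explicitly by cutting along $\Gamma$. Since $\Sigma_{g,n}(\Gamma)=\sqcup_{i=1}^{s}\Sigma_{g_{i},c_{i},d_{i}}$, each $\gamma_{j}$ contributes a length coordinate $x_{j}\in\mathbb{R}_{\geqslant0}$ together with a twist coordinate $\tau_{j}$. After quotienting by the subgroup of $\mathrm{Stab}(\Gamma)$ generated by the mapping class groups of the complementary pieces and by the Dehn twists around the $\gamma_{j}$, the fibre of $\mathcal{M}_{g,n}^{\Gamma}$ over $\underline{x}=(x_{1},\ldots,x_{k})$ becomes the product $\prod_{i=1}^{s}\mathcal{M}_{g_{i},c_{i},d_{i}}(\underline{x}^{(i)})$ times the twist torus $\prod_{j=1}^{k}\mathbb{R}/x_{j}\mathbb{Z}$.

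The key analytic input is Wolpert's formula $\omega_{WP}=\sum_{j}d\ell_{j}\wedge d\tau_{j}$ in any Fenchel--Nielsen system. Extending the partial pants decomposition provided by $\Gamma$ to a full one, the Weil--Petersson volume form factorises cleanly as $\prod_{j=1}^{k}(dx_{j}\wedge d\tau_{j})$ wedged with the Weil--Petersson volume forms on the cut pieces $\mathcal{M}_{g_{i},c_{i},d_{i}}(\underline{x}^{(i)})$. Integrating each twist $\tau_{j}$ over its fundamental domain $[0,x_{j})$ produces the Jacobian factor $x_{1}\cdots x_{k}$, and the remaining integrations over the complementary moduli spaces yield $V_{n}(\Gamma,\underline{x})$, recovering the claimed identity up to the overall combinatorial constant.

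The step I expect to be the most delicate is the precise identification of $C(\Gamma)$, which measures the index inside $\mathrm{Stab}(\Gamma)$ of the subgroup generated by the Dehn twists about the $\gamma_{j}$ and the mapping class groups of the cut pieces; it records the extra symmetries of $\Gamma$, such as mapping classes that swap the two sides of some $\gamma_{j}$ when the two complementary pieces are homeomorphic as marked bordered surfaces. Once the covering $\pi$ and the adapted Fenchel--Nielsen description are set up correctly on the orbifold level, the analytic content of the proof is essentially Wolpert's formula, and the remainder of the argument reduces to orbifold-theoretic bookkeeping of these residual symmetries.
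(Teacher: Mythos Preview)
The paper does not prove this statement at all: it is quoted as a background result from Mirzakhani's original paper \cite[Theorem 7.1]{Mi2007} and is simply used as a tool in the applications. There is therefore no ``paper's own proof'' to compare against.

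That said, your sketch is a faithful outline of Mirzakhani's original argument: the unfolding over $\mathcal{M}_{g,n}^{\Gamma}=\mathcal{T}_{g,n}/\mathrm{Stab}(\Gamma)$, the factorisation of the Weil--Petersson form in Fenchel--Nielsen coordinates via Wolpert's formula, and the integration over the twist torus producing the Jacobian $x_1\cdots x_k$ are exactly the ingredients of the standard proof. Your identification of $C(\Gamma)$ as coming from the index of the subgroup generated by Dehn twists and the mapping class groups of the cut pieces inside $\mathrm{Stab}(\Gamma)$ is also correct, and matches the description alluded to in the paper via the reference to \cite[Footnote (2)]{Wr2020}. So your proposal is not wrong, merely unnecessary in the context of this paper.
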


One can see \cite[Footnote (2)]{Wr2020} for a detailed description
of the constant $C\left(\Gamma\right)$. We note that if each component
of the multicurve $\Gamma$ is separating then $C\left(\Gamma\right)=1$.

\section{Preliminary Volume Computations\protect\label{sec:Preliminary-Volume-Computations}}

In this section, we isolate some technical computations that we will
frequently apply throughout the remainder of the paper. First, we
begin with noting a computation due to Manin and Zograf as well as
estimating the order of the tail of a related series.
\begin{lem}
\label{lem:Manin-Zograf-series}The series 
\begin{equation}
\sum_{i=0}^{\infty}\frac{V_{0,i+3}x_{0}^{i+1}}{(2\pi^{2})^{i}(i+1)!}=\frac{j_{0}^{2}}{4},\label{eq:j_0-sum}
\end{equation}
where $j_{0}$ is the first positive zero of the Bessel function $J_{0}$.
Moreover, for any fixed $g\geq0$, as $n\to\infty$

\[
\sum_{i=\lfloor\sqrt{n}\rfloor}^{\infty}\frac{V_{g,i+3}x_{0}^{i+1}}{(i+1)!(2\pi^{2})^{i}(i+4)^{\frac{5g}{2}}}\ll_{g}\frac{1}{n^{\frac{1}{4}}},
\]
and 

\[
\sum_{i=0}^{\infty}\frac{V_{g,i+3}x_{0}^{i+1}}{(i+1)!(2\pi^{2})^{i}(i+4)^{\frac{5g}{2}}}\ll_{g}1.
\]
\end{lem}

\begin{proof}
Defining 
\[
\Phi_{0}\left(x\right)\eqdf\sum_{i=0}^{\infty}\frac{V_{0,i+3}x_{0}^{i+1}}{(2\pi^{2})^{i}(i+1)!},
\]
as discussed in Section \ref{subsec:proof-overviews}, it was shown
in \cite[eq. (0.8)]{Ka.Ma.Za96} that $y(x)\eqdf\Phi_{0}''\left(x\right)$
can be obtained by inverting the Bessel function $x(y)\eqdf-\sqrt{y}J_{0}'\left(2\sqrt{y}\right)$.
The first part then follows from evaluating $y\left(x_{0}\right)=\frac{j_{0}^{2}}{4}$.
For the second part, by Theorem \ref{thm:Manin-Zograf}, as $i\to\infty$,
\[
\frac{V_{g,i+3}x_{0}^{i+1}}{(2\pi^{2})^{i}(i+1)!(i+4)^{\frac{5g}{2}}}=(2\pi^{2})^{3g}x_{0}^{-2}\frac{(i+3)(i+2)}{(i+4)^{\frac{7}{2}}}\left(B_{g}+O_{g}\left(\frac{1}{i}\right)\right)\ll_{g}\frac{1}{(i+4)^{\frac{3}{2}}},
\]
and so, as $n\to\infty$ and $i\geq\lfloor\sqrt{n}\rfloor$, we have
\[
\sum_{i=\lfloor\sqrt{n}\rfloor}^{\infty}\frac{V_{g,i+3}x_{0}^{i+1}}{(2\pi^{2})^{i}(i+1)!(i+4)^{\frac{5g}{2}}}\ll_{g}\sum_{i=\lfloor\sqrt{n}\rfloor}^{\infty}\frac{1}{(i+4)^{\frac{3}{2}}}\ll_{g}\int_{\sqrt{n}-1}^{\infty}\frac{1}{(x+4)^{\frac{3}{2}}}\mathrm{d}x\ll_{g}\frac{1}{n^{\frac{1}{4}}}.
\]
The final part follows from the limit comparison test by comparing
the summand $\frac{V_{g,i+3}x_{0}^{i+1}}{(i+1)!(2\pi^{2})^{i}(i+4)^{\frac{5g}{2}}}$
with the sequence $\frac{1}{(i+4)^{\frac{3}{2}}}$. 
\end{proof}
\begin{lem}
\label{lem:bottom-terms}For any $0\leqslant g_{1}\leqslant g$ and
$\varepsilon>0$, as $n\to\infty$,
\[
\sum_{i=0}^{\lfloor\frac{n-4}{2}\rfloor}{n-3 \choose i}\frac{V_{g_{1},i+3}V_{g-g_{1},n-i-1}}{V_{g,n}}\ll_{g}\frac{1}{n^{\frac{1}{2}-\varepsilon}}.
\]
\end{lem}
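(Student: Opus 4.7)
The plan is to reduce the sum to one that can be bounded using Theorem \ref{thm:Manin-Zograf} and the estimates from Lemma \ref{lem:Manin-Zograf-series}. The key observation is that in the summation range $0 \leq i \leq \lfloor (n-4)/2 \rfloor$, the index $n-i-1 \geq (n-2)/2$ also tends to infinity, so the Manin--Zograf asymptotic applies to $V_{g,n}$ and to $V_{g-g_1, n-i-1}$ simultaneously. Substituting Theorem \ref{thm:Manin-Zograf} for both of these and simplifying should give
\[
\frac{V_{g-g_1, n-i-1}}{V_{g,n}} \leq C_g \cdot \frac{x_0^{i+1}}{(2\pi^2)^{i+1+3g_1}} \cdot \frac{(n-i-1)!}{n!} \cdot \frac{(n-i)^{(5(g-g_1)-7)/2}}{(n+1)^{(5g-7)/2}}
\]
for a constant $C_g$, valid for all sufficiently large $n$.

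Next I would collapse the binomial coefficient with the factorial ratio to obtain
\[
\binom{n-3}{i}\frac{(n-i-1)!}{n!} = \frac{(n-i-1)(n-i-2)}{n(n-1)(n-2) \cdot i!} \leq \frac{C}{n \cdot i!},
\]
using $(n-i-1)(n-i-2) \leq n^2$ throughout the summation range. For the remaining polynomial factor one splits by the sign of $5(g-g_1)-7$: if it is non-negative then $(n-i)^{(5(g-g_1)-7)/2} \leq n^{(5(g-g_1)-7)/2}$ since $n-i \leq n$, while if it is negative then $n-i \geq n/2$ gives the same bound up to a $g$-dependent constant. Either way,
\[
\frac{(n-i)^{(5(g-g_1)-7)/2}}{(n+1)^{(5g-7)/2}} \leq C_g \cdot n^{-5g_1/2}.
\]

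Combining these estimates, each summand is bounded by
\[
\binom{n-3}{i}\frac{V_{g_1, i+3} V_{g-g_1, n-i-1}}{V_{g,n}} \leq C_g \cdot n^{-1 - 5g_1/2} \cdot \frac{V_{g_1, i+3} x_0^{i+1}}{i! (2\pi^2)^i}.
\]
To sum the right-hand side, I would invoke the comparison $V_{g_1, i+3} \leq C_g V_{0, i+3}$ established inside the proof of Lemma \ref{lem:Manin-Zograf-series}, together with Theorem \ref{thm:Manin-Zograf} applied to $V_{0,i+3}$, which yields $V_{0, i+3} x_0^{i+1}/(i!(2\pi^2)^i) = O((i+4)^{-1/2})$ for large $i$, with bounded contribution for small $i$. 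Summing $(i+4)^{-1/2}$ over $0 \leq i \leq \lfloor(n-4)/2\rfloor$ gives $O_g(\sqrt{n})$, and multiplying by the $n^{-1-5g_1/2}$ prefactor produces the overall bound $O_g(n^{-1/2 - 5g_1/2}) \leq O_g(n^{-1/2})$, which is stronger than the claimed $O_g(n^{-1/2+\varepsilon})$.

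The main obstacle is not conceptual but one of careful bookkeeping: handling the potentially negative exponent $(5(g-g_1)-7)/2$ (which occurs for $g_1 \geq g-1$), ensuring that all $g$-dependent constants from the $O_g(1/n)$ error in Theorem \ref{thm:Manin-Zograf} can be absorbed uniformly in the estimates, and treating separately the small-$i$ terms where the Manin--Zograf asymptotic is not yet in its asymptotic regime (there the summand is a constant divided by $i!$ and contributes a convergent tail).
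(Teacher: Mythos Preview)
Your argument is correct, and in fact yields the sharper bound $O_g(n^{-1/2})$ without the $\varepsilon$-loss. The paper proceeds differently: it splits the sum at $i=\lfloor\sqrt{n}\rfloor$, treats the short range $i\leqslant\sqrt{n}$ by exploiting $(i+1)/n\leqslant C n^{-1/2}$ together with the convergent series from Lemma~\ref{lem:Manin-Zograf-series}, and on the long range $\sqrt{n}<i\leqslant\lfloor(n-4)/2\rfloor$ uses the interpolation $\frac{1}{n}=O\bigl(n^{-1/2+\varepsilon}(i+4)^{-1/2-\varepsilon}\bigr)$ to force the tail into a summable series, which is where the $\varepsilon$ enters. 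You avoid the split altogether by accepting that $\sum_{i\leqslant n/2}(i+4)^{-1/2}=O(\sqrt{n})$ and absorbing this directly into the uniform $n^{-1}$ prefactor; this is more elementary and loses nothing. The paper's splitting strategy is reused later (e.g.\ in Lemma~\ref{lem:top-terms}) where one genuinely needs to separate a main term from an error, so there it is essential, but for the present lemma your unified bound is cleaner. One small point: your appeal to $V_{g_1,i+3}\leqslant C_g V_{0,i+3}$ is only established for $i\geqslant2$ inside the proof of Lemma~\ref{lem:Manin-Zograf-series}; you correctly flag that $i=0,1$ must be handled separately, and they contribute $O_g(n^{-1})$ trivially.
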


\begin{proof}
We split the summation and consider first when $0\leqslant i\leqslant\lfloor\sqrt{n}\rfloor$.
By Theorem \ref{thm:Manin-Zograf},
\begin{align*}
 & {n-3 \choose i}\frac{V_{g-g_{1},n-i-1}}{V_{g,n}}\\
= & \frac{(n-3)!}{i!(n-3-i)!}\frac{(n-i-1)!}{n!}\left(\frac{n-i}{n+1}\right)^{\frac{5(g-g_{1})-7}{2}}x_{0}^{i+1}\frac{1}{(2\pi^{2})^{3g_{1}+i+1}(n+1)^{\frac{5g_{1}}{2}}}\left(1+O_{g}\left(\frac{1}{n}\right)\right)\\
= & \frac{i+1}{n}\frac{n-1-i}{n-1}\frac{n-2-i}{n-2}\frac{x_{0}^{i+1}}{(i+1)!}\frac{1}{(2\pi^{2})^{3g_{1}+i+1}(n+1)^{\frac{5g_{1}}{2}}}\left(1+O_{g}\left(\frac{1}{\sqrt{n}}\right)\right)\\
\ll_{g} & \frac{1}{\sqrt{n}}\frac{x_{0}^{i+1}}{(i+1)!}\frac{1}{(2\pi^{2})^{3g_{1}+i+1}}\frac{1}{(n+1)^{\frac{5g_{1}}{4}}}\frac{1}{(i+4)^{\frac{5g_{1}}{2}}}.
\end{align*}
And so, using Lemma \ref{lem:Manin-Zograf-series}
\begin{align*}
\sum_{i=0}^{\lfloor\sqrt{n}\rfloor}{n-3 \choose i}\frac{V_{g_{1},i+3}V_{g-g_{1},n-i-1}}{V_{g,n}} & \ll_{g}\frac{1}{n^{\frac{1}{2}+\frac{5g_{1}}{4}}}\sum_{i=0}^{\lfloor\sqrt{n}\rfloor}\frac{V_{g_{1},i+3}x_{0}^{i+1}}{(2\pi^{2})^{3g_{1}+i+1}(i+1)!(i+4)^{\frac{5g_{1}}{2}}}\\
 & \ll_{g}\frac{1}{n^{\frac{1}{2}+\frac{5g_{1}}{4}}}.
\end{align*}
For $\lfloor\sqrt{n}\rfloor\leqslant i\leqslant\lfloor\frac{n-4}{2}\rfloor$,
we again use Theorem \ref{thm:Manin-Zograf} to obtain
\begin{align*}
 & {n-3 \choose i}\frac{V_{g-g_{1},n-i-1}}{V_{g,n}}\\
= & \frac{(n-3)!}{i!(n-3-i)!}\frac{(n-i-1)!}{n!}\left(\frac{n-i}{n+1}\right)^{\frac{5(g-g_{1})-7}{2}}x_{0}^{i+1}\frac{1}{(2\pi^{2})^{3g_{1}+i+1}(n+1)^{\frac{5g_{1}}{2}}}\left(1+O_{g}\left(\frac{1}{n}\right)\right)\\
\leqslant & 2^{\frac{7}{2}}\frac{1}{n}\frac{1}{i!}\frac{x_{0}^{i+1}}{(2\pi^{2})^{3g_{1}+i+1}(n+1)^{\frac{5g_{1}}{2}}}\left(1+O_{g}\left(\frac{1}{n}\right)\right),
\end{align*}
where we use 
\[
\left(\frac{n-i}{n+1}\right)^{\frac{5(g-g_{1})-7}{2}}\leqslant2^{\frac{7}{2}},
\]
since $i+1\leqslant\frac{n}{2}-1$ and $n-i\geqslant\frac{n}{2}+2$.
Moreover, for any $\varepsilon>0,$
\[
\frac{1}{n}\ll\frac{1}{n^{\frac{1}{2}-\varepsilon}}\frac{1}{(i+4)^{\frac{1}{2}+\varepsilon}}
\]
and so
\begin{align*}
\sum_{i=\lfloor\sqrt{n}\rfloor+1}^{\lfloor\frac{n-4}{2}\rfloor}{n-3 \choose i}\frac{V_{g_{1},i+3}V_{g-g_{1},n-i-1}}{V_{g,n}} & \ll_{g}\frac{1}{n^{\frac{1}{2}-\varepsilon}}\sum_{i=\lfloor\sqrt{n}\rfloor+1}^{\lfloor\frac{n-4}{2}\rfloor}\frac{V_{g_{1},i+3}x_{0}^{i+1}}{(2\pi^{2})^{3g_{1}+i+1}(i+4)^{\frac{1}{2}+\varepsilon}i!(n+1)^{\frac{5g_{1}}{2}}}\\
 & \ll_{g}\frac{1}{n^{\frac{1}{2}(1-\varepsilon)}}.
\end{align*}
The last bound follows from the fact that for $i\geqslant\sqrt{n}$,
Theorem \ref{thm:Manin-Zograf} provides
\begin{align*}
\frac{V_{g_{1},i+3}x_{0}^{i+1}}{(2\pi^{2})^{3g_{1}+i+1}(i+4)^{\frac{1}{2}+\varepsilon}i!(n+1)^{\frac{5g_{1}}{2}}} & \ll_{g}\frac{1}{(i+4)^{1+\varepsilon}}.
\end{align*}
\end{proof}
\begin{lem}
\label{lem:top-terms}For any $g\geqslant0$ and $k\leqslant\frac{1}{8}\log(n)$,
the following estimates hold as $n\to\infty$, 
\begin{align*}
\sum_{i=0}^{\lfloor\frac{n-4}{2}\rfloor}{n-3-k \choose i+1}\frac{V_{g_{1},i+3}V_{g-g_{1},n-i-1}}{V_{g,n}} & =\begin{cases}
\frac{j_{0}^{2}}{8\pi^{2}}+O_{g}\left(\frac{1}{n^{\frac{1}{4}}}\right) & \text{if \ensuremath{g_{1}=0,}}\\
O_{g}\left(\frac{1}{n^{\frac{1}{4}}}\right) & \text{if \ensuremath{1\leq g_{1}\leq g,}}
\end{cases}\\
\sum_{i=0}^{\lfloor\sqrt{n}\rfloor}{n-3-k \choose i+1}\frac{V_{g_{1},i+3}V_{g-g_{1},n-i-1}}{V_{g,n}} & =\begin{cases}
\frac{j_{0}^{2}}{8\pi^{2}}+O_{g}\left(\frac{1}{n^{\frac{1}{4}}}\right) & \text{if \ensuremath{g_{1}=0,}}\\
O_{g}\left(\frac{1}{n^{\frac{5}{4}}}\right) & \text{if \ensuremath{1\leq g_{1}\leq g,}}
\end{cases}
\end{align*}

\[
\sum_{i=\lfloor\sqrt{n}\rfloor+1}^{\lfloor\frac{n-4}{2}\rfloor}{n-3-k \choose i+1}\frac{V_{g_{1},i+3}V_{g-g_{1},n-i-1}}{V_{g,n}}\ll_{g}\frac{1}{n^{\frac{1}{4}}}.
\]
\end{lem}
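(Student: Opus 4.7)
The plan is to split the first sum at $i=\lfloor\sqrt{n}\rfloor$ to obtain the second and third identities, which together imply the first. The starting point throughout is the algebraic identity
\[
\binom{n-3-k}{i+1}\frac{(n-i-1)!}{n!}=\frac{1}{(i+1)!}\prod_{j=0}^{i}\frac{n-3-k-j}{n-j},
\]
combined with Theorem \ref{thm:Manin-Zograf} applied to both $V_{g-g_1,n-i-1}$ and $V_{g,n}$, which yields
\[
\frac{V_{g-g_1,n-i-1}}{V_{g,n}}=\frac{x_0^{i+1}(n-i-1)!}{(2\pi^2)^{3g_1+i+1}n!}\left(\frac{n-i}{n+1}\right)^{\frac{5(g-g_1)-7}{2}}\frac{1}{(n+1)^{5g_1/2}}\bigl(C_{g_1,g}+O_g(1/n)\bigr).
\]
For the head sum $0\leq i\leq\lfloor\sqrt{n}\rfloor$, the hypothesis $k\leq\tfrac{1}{8}\log(n)$ gives $\prod_{j=0}^{i}\tfrac{n-3-k-j}{n-j}=1+O(\log(n)/\sqrt{n})=1+O(n^{-1/4})$ uniformly, and similarly $((n-i)/(n+1))^{(5(g-g_1)-7)/2}=1+O_g(n^{-1/2})$. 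The head sum therefore reduces, up to the claimed error, to $C_{g_1,g}\sum_{i=0}^{\lfloor\sqrt{n}\rfloor}\frac{V_{g_1,i+3}x_0^{i+1}}{(i+1)!(2\pi^2)^{3g_1+i+1}(n+1)^{5g_1/2}}$. When $g_1=0$ the factor $(n+1)^{-5g_1/2}$ is trivial, completing to an infinite series costs only $O(n^{-1/4})$ by the tail estimate of Lemma \ref{lem:Manin-Zograf-series}, and the first identity of that same lemma identifies the total as $\tfrac{1}{2\pi^2}\cdot\tfrac{j_0^2}{4}=\tfrac{j_0^2}{8\pi^2}$. When $g_1\geq 1$, the bound $i+4\leq\sqrt{n}+4\leq C_g(n+1)^{1/2}$ lets me rewrite the uniform factor as $(n+1)^{-5g_1/2}\leq C_g(n+1)^{-5g_1/4}(i+4)^{-5g_1/2}$; this reintroduces the weight needed to apply the third identity of Lemma \ref{lem:Manin-Zograf-series}, giving the bound $O_g(n^{-5g_1/4})\leq O_g(n^{-5/4})$.

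For the tail $\lfloor\sqrt{n}\rfloor+1\leq i\leq\lfloor(n-4)/2\rfloor$, the trivial bounds $\prod_{j=0}^{i}\tfrac{n-3-k-j}{n-j}\leq 1$ and $(n-i)/(n+1)\in[\tfrac{1}{2},1]$ give
\[
\binom{n-3-k}{i+1}\frac{V_{g_1,i+3}V_{g-g_1,n-i-1}}{V_{g,n}}\leq\frac{C_g\,V_{g_1,i+3}x_0^{i+1}}{(i+1)!(2\pi^2)^{3g_1+i+1}(n+1)^{5g_1/2}}.
\]
For $g_1=0$ the tail beyond $\sqrt{n}$ is directly bounded by the second identity of Lemma \ref{lem:Manin-Zograf-series} as $O(n^{-1/4})$. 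For $g_1\geq 1$, the reverse inequality $(i+4)^{5g_1/2}\geq(n+1)^{5g_1/4}$ (valid once $i\geq\sqrt{n}$) gives $(n+1)^{-5g_1/2}\leq(n+1)^{-5g_1/4}(i+4)^{-5g_1/2}$, and a further application of Lemma \ref{lem:Manin-Zograf-series} produces a bound $O_g(n^{-5g_1/4-1/4})$, which is in particular $O_g(n^{-1/4})$. Adding the head and tail contributions yields the first estimate of the lemma.

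The delicate step is the bookkeeping that allows both regimes of $i$ to interface with the weighted series in Lemma \ref{lem:Manin-Zograf-series}: the factor $(n+1)^{-5g_1/2}$ must be converted into $(i+4)^{-5g_1/2}$ (times a residual power of $n$) differently depending on whether $i\leq\sqrt{n}$ or $i\geq\sqrt{n}$. It is precisely this comparison, together with the growth rate $V_{g_1,i+3}\sim(i+4)^{(5g_1-7)/2}(i+3)!(2\pi^2)^i/x_0^{i+3}$ from Theorem \ref{thm:Manin-Zograf}, that separates the contribution of $g_1=0$ (where the leading constant $j_0^2/(8\pi^2)$ appears) from that of $g_1\geq 1$ (where only error terms survive).
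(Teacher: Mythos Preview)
Your approach is essentially the same as the paper's: split at $\lfloor\sqrt{n}\rfloor$, use Theorem~\ref{thm:Manin-Zograf} to reduce the binomial--volume ratio to $\frac{1}{(i+1)!}\bigl(\frac{x_0}{2\pi^2}\bigr)^{i+1}$ times controlled factors, and feed the resulting sums into Lemma~\ref{lem:Manin-Zograf-series}. The head analysis is correct, as is the tail bound for $g_1=0$.

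There is, however, a genuine slip in the tail estimate for $g_1\geq 1$. You assert that $(i+4)^{5g_1/2}\geq(n+1)^{5g_1/4}$ implies
\[
(n+1)^{-5g_1/2}\leq(n+1)^{-5g_1/4}(i+4)^{-5g_1/2},
\]
but the inequality goes the other way: from $(i+4)^{-5g_1/2}\leq(n+1)^{-5g_1/4}$ one gets $(n+1)^{-5g_1/4}(i+4)^{-5g_1/2}\leq(n+1)^{-5g_1/2}$. Consequently your claimed tail bound $O_g(n^{-5g_1/4-1/4})$ is not justified, and indeed it is false: for $g_1\geq 1$ the summand behaves like $(i+4)^{(5g_1-3)/2}(n+1)^{-5g_1/2}$, whose sum over $\sqrt{n}\leq i\leq n/2$ is of order $n^{-1/2}$, not $n^{-3/2}$. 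The fix is to use the cruder (and trivially true) inequality $(n+1)^{-5g_1/2}\leq(i+4)^{-5g_1/2}$, valid since $i+4\leq n+1$ throughout the range; this is exactly what the paper does, and the second identity of Lemma~\ref{lem:Manin-Zograf-series} then gives the stated $O_g(n^{-1/4})$. Since the lemma only claims $O_g(n^{-1/4})$ for the tail, this repair leaves all three conclusions intact.
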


\begin{proof}
We start with $g_{1}=0$. By Lemma \ref{lem:Manin-Zograf-series},
we have 
\[
\sum_{i=0}^{\infty}\frac{V_{0,i+3}}{(i+1)!}\left(\frac{x_{0}}{2\pi^{2}}\right)^{i+1}=\frac{j_{0}^{2}}{8\pi^{2}},
\]
 and so we write
\begin{align*}
 & \left|\sum_{i=0}^{\lfloor\frac{n-4}{2}\rfloor}{n-3-k \choose i+1}\frac{V_{0,i+3}V_{g,n-i-1}}{V_{g,n}}-\sum_{i=0}^{\infty}\frac{V_{0,i+3}}{(i+1)!}\left(\frac{x_{0}}{2\pi^{2}}\right)^{i+1}\right|\\
\leqslant & \underbrace{\left|\sum_{i=0}^{\lfloor\sqrt{n}\rfloor}{n-3-k \choose i+1}\frac{V_{0,i+3}V_{g,n-i-1}}{V_{g,n}}-\sum_{i=0}^{\lfloor\sqrt{n}\rfloor}\frac{V_{0,i+3}}{(2\pi^{2})^{i+1}(i+1)!}x_{0}^{i+1}\right|}_{(1)}\\
 & +\underbrace{\left|\sum_{i=\lfloor\sqrt{n}\rfloor+1}^{\lfloor\frac{n-4}{2}\rfloor}{n-3-k \choose i+1}\frac{V_{0,i+3}V_{g,n-i-1}}{V_{g,n}}\right|}_{(2)}+\underbrace{\left|\sum_{i=\lfloor\sqrt{n}\rfloor+1}^{\infty}\frac{V_{0,i+3}}{(2\pi^{2})^{i+1}(i+1)!}x_{0}^{i+1}\right|}_{(3)}.
\end{align*}
For (1), we use Theorem \ref{thm:Manin-Zograf} to see that because
$i\leq\sqrt{n}$,
\begin{align*}
 & {n-3-k \choose i+1}\frac{V_{g,n-i-1}}{V_{g,n}}\\
= & \frac{(n-3-k)!}{(i+1)!(n-k-i-4)!}\frac{(n-i-1)!}{n!}\left(\frac{n-i}{n+1}\right)^{\frac{5g-7}{2}}\left(\frac{x_{0}}{2\pi^{2}}\right)^{i+1}\left(1+O_{g}\left(\frac{1}{n}\right)\right)\\
= & \left(\prod_{p=0}^{k+2}\left(1-\frac{i+1}{n-p}\right)\right)\frac{1}{(i+1)!}\left(\frac{x_{0}}{2\pi^{2}}\right)^{i+1}\left(1+O_{g}\left(\frac{1}{\sqrt{n}}\right)\right)\\
= & \frac{1}{(i+1)!}\left(\frac{x_{0}}{2\pi^{2}}\right)^{i+1}\left(1+O_{g}\left(\frac{1}{\sqrt{n}}\right)\right),
\end{align*}
and for (2), when $\sqrt{n}<i\leq\left\lfloor \frac{n-4}{2}\right\rfloor $,
\[
{n-3-k \choose i+1}\frac{V_{g,n-i-1}}{V_{g,n}}\leqslant2^{\frac{7}{2}}\frac{1}{(i+1)!}\left(\frac{x_{0}}{2\pi^{2}}\right)^{i+1}\left(1+O_{g}\left(\frac{1}{n}\right)\right).
\]
It follows by Lemma \ref{lem:Manin-Zograf-series} that
\begin{align*}
\left|\sum_{i=0}^{\lfloor\sqrt{n}\rfloor}{n-3-k \choose i+1}\frac{V_{0,i+3}V_{g,n-i-1}}{V_{g,n}}-\sum_{i=0}^{\lfloor\sqrt{n}\rfloor}\frac{V_{0,i+3}}{(2\pi^{2})^{i+1}(i+1)!}x_{0}^{i+1}\right| & \ll_{g}\frac{1}{\sqrt{n}}\sum_{i=0}^{\lfloor\sqrt{n}\rfloor}\frac{V_{i+3}}{(2\pi^{2})^{i+1}(i+1)!}x_{0}^{i+1}\\
 & \ll_{g}\frac{1}{\sqrt{n}},\\
\left|\sum_{i=\lfloor\sqrt{n}\rfloor+1}^{\lfloor\frac{n-4}{2}\rfloor}{n-3-k \choose i+1}\frac{V_{0,i+3}V_{g,n-i-1}}{V_{g,n}}\right| & \ll_{g}\frac{1}{\sqrt{n}},\\
\left|\sum_{i=\lfloor\sqrt{n}\rfloor+1}^{\infty}\frac{V_{0,i+3}}{(2\pi^{2})^{i+1}(i+1)!}x_{0}^{i+1}\right| & \ll_{g}\frac{1}{\sqrt{n}}.
\end{align*}
For $1\leq g_{1}\leq g$, we have
\begin{align*}
 & {n-3-k \choose i+1}\frac{V_{g-g_{1},n-i-1}}{V_{g,n}}\\
= & \frac{(n-3-k)!}{(i+1)!(n-k-i-4)!}\frac{(n-i-1)!}{n!}\left(\frac{n-i}{n+1}\right)^{\frac{5g_{1}-7}{2}}\frac{1}{(n+1)^{\frac{5(g-g_{1})}{2}}}\left(\frac{x_{0}}{2\pi^{2}}\right)^{i+1}\left(\frac{B_{g-g_{1}}}{B_{g}}+O_{g}\left(\frac{1}{n}\right)\right)\\
\ll_{g} & \frac{1}{(i+1)!(n+1)^{\frac{5(g-g_{1})}{2}}}\left(\frac{x_{0}}{2\pi^{2}}\right)^{i+1}.
\end{align*}
Thus, by Lemma \ref{lem:Manin-Zograf-series}

\begin{align*}
\sum_{i=0}^{\lfloor\sqrt{n}\rfloor}{n-3-k \choose i+1}\frac{V_{g-g_{1},i+3}V_{g_{1},n-i-1}}{V_{g,n}} & \ll_{g}\sum_{i=0}^{\lfloor\sqrt{n}\rfloor}\frac{(i+4)^{\frac{5(g-g_{1})}{2}}}{(n+1)^{\frac{5(g-g_{1})}{2}}}\frac{V_{g-g_{1},i+3}}{(i+1)!(i+4)^{\frac{5(g-g_{1})}{2}}}\left(\frac{x_{0}}{2\pi^{2}}\right)^{i+1}\\
\ll_{g}\frac{1}{(n+1)^{\frac{5(g-g_{1})}{4}}} & \sum_{i=0}^{\infty}\frac{V_{g-g_{1},i+3}}{(i+1)!(i+4)^{\frac{5(g-g_{1})}{2}}}\left(\frac{x_{0}}{2\pi^{2}}\right)^{i+1}\ll_{g}\frac{1}{n^{\frac{5}{4}}},
\end{align*}
and 
\begin{align*}
\sum_{i=\left\lfloor \sqrt{n}\right\rfloor +1}^{\lfloor\frac{n-4}{2}\rfloor}{n-3-k \choose i+1}\frac{V_{g-g_{1},i+3}V_{g_{1},n-i-1}}{V_{g,n}} & \ll_{g}\frac{1}{(n+1)^{\frac{5(g-g_{1})}{2}}}\sum_{i=\left\lfloor \sqrt{n}\right\rfloor +1}^{\lfloor\frac{n-4}{2}\rfloor}\frac{V_{g-g_{1},i+3}}{(i+1)!}\left(\frac{x_{0}}{2\pi^{2}}\right)^{i+1}\\
\ll_{g}\sum_{i=\left\lfloor \sqrt{n}\right\rfloor +1}^{\infty} & \frac{V_{g-g_{1},i+3}}{(i+1)!(i+4)^{\frac{5(g-g_{1})}{2}}}\left(\frac{x_{0}}{2\pi^{2}}\right)^{i+1}\ll_{g}\frac{1}{n^{\frac{1}{4}}}.
\end{align*}
The latter claims follow from using the respective estimates already
calculated.
\end{proof}
\begin{rem}
\label{rem:small-sum}We note from the proof of Lemma \ref{lem:top-terms}
that we implicitly also prove the estimate
\[
\sum_{i=0}^{n}\frac{1}{(n+1)^{\frac{5g}{2}}}\frac{V_{g,i+3}}{(i+1)!}\left(\frac{x_{0}}{2\pi^{2}}\right)^{i+1}\ll_{g}\frac{1}{n^{\frac{1}{4}}},
\]
whenever $g\geq1$.
\end{rem}

\section{Asymptotics of Intersection Numbers\protect\label{sec:Asymptotics-of-Intersection}}

In this section we prove Theorems \ref{thm:int-asymptotic} and \ref{thm:I0-asymptotic}. 

\subsection{A special case}

We begin with Theorem \ref{thm:int-asymptotic}, starting with with
the special case of $g=0$, $k=1$ and $d_{1}=1$.
\begin{prop}
\label{prop:tau_1-asymp}As $n\to\infty,$
\[
\frac{[\tau_{0}^{n-1}\tau_{1}]_{0,n}}{V_{0,n}}=12\sum_{i=0}^{\infty}\frac{V_{0,i+3}x_{0}^{i+1}}{(2\pi^{2})^{i+1}(i+1)!}+O\left(\frac{1}{n^{\frac{1}{4}}}\right)=\frac{3j_{0}^{2}}{2\pi^{2}}+O\left(\frac{1}{n^{\frac{1}{4}}}\right).
\]
\end{prop}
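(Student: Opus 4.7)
The plan is to apply the Li--Xu recursion (Lemma \ref{lem:Mirzakhani-Zograf-(Ib)}) with $g=0$, $\ell=0$, and all $d_i=0$, which expresses $[\tau_0^{n-1}\tau_1]_{0,n}$ as an explicit convolution of Weil--Petersson volumes. The genus $-1$ summand vanishes, and using $[\tau_0^m]_{0,m}=V_{0,m}$ the remaining two pieces reduce to
\[
8\sum_{i=0}^{n-4}\binom{n-3}{i}V_{0,i+3}V_{0,n-1-i} \quad\text{and}\quad 4\sum_{i=1}^{n-3}\binom{n-3}{i}V_{0,i+2}V_{0,n-i},
\]
where the ranges reflect that $V_{0,2}$ is undefined. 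A change of index $i\mapsto n-3-i$ in the second sum makes it coincide (up to the factor of $4$) with the first, yielding
\[
[\tau_0^{n-1}\tau_1]_{0,n} \;=\; 12\sum_{i=0}^{n-4}\binom{n-3}{i}\,V_{0,i+3}\,V_{0,n-1-i}.
\]

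To match this to the series in the proposition I rewrite the sum in a form suited to the estimates of Section \ref{sec:Preliminary-Volume-Computations}. The involution $i\mapsto n-4-i$ sends $\binom{n-3}{i}$ to $\binom{n-3}{i+1}$ while permuting the two volume factors, so the same quantity equals $\sum_{i=0}^{n-4}\binom{n-3}{i+1}V_{0,i+3}V_{0,n-1-i}$. Dividing by $V_{0,n}$ and splitting the range at $i=\lfloor(n-4)/2\rfloor$, the head is precisely a sum of the kind considered in Lemma \ref{lem:top-terms} with $k=0$ and $g_1=g=0$, contributing $\tfrac{j_0^2}{8\pi^2}+O(n^{-1/4})$; the tail, under the same involution, becomes a sum of the form handled by Lemma \ref{lem:bottom-terms} with $g_1=g=0$ and is therefore $O(n^{-1/2+\varepsilon})$ for any $\varepsilon>0$. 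Multiplying the combined bound $\tfrac{j_0^2}{8\pi^2}+O(n^{-1/4})$ by $12$ gives $\tfrac{3j_0^2}{2\pi^2}+O(n^{-1/4})$, and Lemma \ref{lem:Manin-Zograf-series} identifies this constant with $12\sum_{i=0}^\infty \tfrac{V_{0,i+3}x_0^{i+1}}{(2\pi^2)^{i+1}(i+1)!}$, finishing the proof.

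The main subtlety is identifying which end of the convolution carries the dominant contribution. A naive Manin--Zograf expansion (Theorem \ref{thm:Manin-Zograf}) of $\binom{n-3}{i}V_{0,n-1-i}/V_{0,n}$ at fixed $i$ produces an extra factor of $1/n$, so each such term is only $O(1/n)$ and summing over $i$ would falsely suggest a vanishing limit; the constant $\tfrac{3j_0^2}{2\pi^2}$ in fact arises from the regime $i$ close to $n-4$. The role of the involution $i\mapsto n-4-i$ is precisely to route this dominant contribution into the form $\binom{n-3}{i+1}V_{0,i+3}V_{0,n-1-i}/V_{0,n}$ at small $i$, where Lemma \ref{lem:top-terms} applies directly, while relegating the genuinely small portion to Lemma \ref{lem:bottom-terms}.
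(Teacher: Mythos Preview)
Your proof is correct and follows essentially the same approach as the paper: both apply the recursion of Lemma~\ref{lem:Mirzakhani-Zograf-(Ib)} at $g=0$, $\ell=0$ to obtain $[\tau_0^{n-1}\tau_1]_{0,n}=12\sum_{i=0}^{n-4}\binom{n-3}{i}V_{0,i+3}V_{0,n-1-i}$, then split the sum near its midpoint and feed the two halves into Lemmas~\ref{lem:bottom-terms} and~\ref{lem:top-terms}. The only cosmetic difference is that you first reindex the whole sum via $i\mapsto n-4-i$ and then split, whereas the paper splits first and then reindexes one half; your extra paragraph explaining which end of the convolution is dominant is a nice clarification of what the paper leaves implicit.
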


\begin{proof}
By Lemma \ref{lem:Mirzakhani-Zograf-(Ib)} we have 
\begin{align*}
[\tau_{0}^{n-1}\tau_{1}]_{0,n} & =12\sum_{\substack{I\sqcup J=\{1,...,n-3\}}
}[\tau_{0}^{|I|+3}]_{0,|I|+3}[\tau_{0}^{|J|+2}]_{0,|J|+2}\\
 & =12\sum_{i=0}^{n-4}{n-3 \choose i}V_{i+3}V_{n-i-1}\\
 & =12\sum_{i=0}^{\lfloor\frac{n-4}{2}\rfloor}{n-3 \choose i}V_{0,i+3}V_{0,n-i-1}+12\sum_{i=0}^{\lfloor\frac{n-4}{2}\rfloor-\delta_{n\in2\mathbb{Z}}}{n-3 \choose i+1}V_{0,i+3}V_{0,n-i-1},
\end{align*}
and the result follows from Lemmas \ref{lem:bottom-terms} and \ref{lem:top-terms}
with $g=0$.
\end{proof}
We next continue with the case of $g=0,k=1$ and induct.
\begin{lem}
\label{lem:rec_rel}For $\ell\geq0$, there exist constants $C_{\ell}>0$
such that
\[
\frac{[\tau_{0}^{n-1}\tau_{\ell}]_{0,n}}{V_{0,n}}=C_{\ell}+O\left(\frac{\ell}{n^{\frac{1}{4}}}\right),
\]
where the implied constant is independent of $\ell$. We have that
$C_{0}=1$ and for $\ell\geq1$,
\[
C_{\ell+1}=8C_{\ell}\sum_{i=0}^{\infty}\frac{V_{0,i+3}}{(2\pi^{2})^{i+1}(i+1)!}x_{0}^{i+1}+4\sum_{i=0}^{\infty}\frac{[\tau_{0}^{i+2}\tau_{\ell}]_{0,i+3}}{(2\pi^{2})^{i+1}(i+1)!}x_{0}^{i+1}.
\]
\end{lem}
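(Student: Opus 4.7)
The plan is to induct on $\ell$. The base case $\ell=0$ is immediate from $[\tau_0^n]_{0,n}=V_{0,n}$ (see \eqref{eq:def-int-numbers}), giving $C_0=1$ with zero error. The case $\ell=1$ is exactly Proposition~\ref{prop:tau_1-asymp}, yielding $C_1=\frac{3j_0^2}{2\pi^2}$. For the inductive step, suppose the asymptotic holds at level $\ell\geq 1$ with some absolute constant $c$ in the big-$O$. I apply Lemma~\ref{lem:Mirzakhani-Zograf-(Ib)} with $g=0$ and with $n-3$ auxiliary copies of $\tau_0$; the $g-1$ contribution vanishes, and using $[\tau_0^m]_{0,m}=V_{0,m}$ this collapses to
\[
[\tau_0^{n-1}\tau_{\ell+1}]_{0,n}=8\sum_{i=0}^{n-4}\binom{n-3}{i}[\tau_0^{i+2}\tau_\ell]_{0,i+3}V_{0,n-1-i}+4\sum_{i=0}^{n-3}\binom{n-3}{i}[\tau_0^{i+1}\tau_\ell]_{0,i+2}V_{0,n-i}.
\]
Dividing by $V_{0,n}$, call the two terms $S_1$ and $S_2$; I will show they match the two series in the stated recursion, modulo $O((\ell+1)/n^{1/4})$.

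For $S_1$, split the sum at $i=\lfloor (n-4)/2\rfloor$. On the lower half, the trivial bound $[\tau]\leq V$ (Lemma~\ref{lem:intersection-triv-bound}) followed by Lemma~\ref{lem:bottom-terms} (with $\varepsilon=\tfrac14$) gives $O(n^{-1/4})$. On the upper half, the substitution $j=n-4-i$ recasts the sum as $\sum_{j\leq (n-4)/2}\binom{n-3}{j+1}[\tau_0^{n-2-j}\tau_\ell]_{0,n-1-j}V_{0,j+3}$, and the inductive hypothesis applies to $[\tau_0^{n-2-j}\tau_\ell]_{0,n-1-j}$ (since $n-1-j\geq n/2$ is large), producing main part $C_\ell V_{0,n-1-j}$ and remainder $O(\ell V_{0,n-1-j}/n^{1/4})$. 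Lemma~\ref{lem:top-terms} with $g=g_1=k=0$ then evaluates the main sum as $C_\ell\cdot\frac{j_0^2}{8\pi^2}+O(n^{-1/4})$ and bounds the remainder by $O(\ell/n^{1/4})$. Rewriting $\frac{j_0^2}{8\pi^2}=\sum_{i}\frac{V_{0,i+3}x_0^{i+1}}{(2\pi^2)^{i+1}(i+1)!}$ via Lemma~\ref{lem:Manin-Zograf-series} delivers the first series of the recursion.

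For $S_2$, re-indexing by $j=i-1$ gives $S_2=\frac{4}{V_{0,n}}\sum_{j=0}^{n-4}\binom{n-3}{j+1}[\tau_0^{j+2}\tau_\ell]_{0,j+3}V_{0,n-1-j}$. Split at $j=\lfloor\sqrt n\rfloor$. For $j\leq\sqrt n$, the proof of Lemma~\ref{lem:top-terms} provides $\binom{n-3}{j+1}V_{0,n-1-j}/V_{0,n}=\frac{x_0^{j+1}}{(2\pi^2)^{j+1}(j+1)!}(1+O(n^{-1/2}))$; summing produces $4\sum_{j\leq\sqrt n}\frac{[\tau_0^{j+2}\tau_\ell]_{0,j+3}x_0^{j+1}}{(2\pi^2)^{j+1}(j+1)!}+O(n^{-1/2})$, and extending to $j=\infty$ contributes a tail $O(n^{-1/4})$ by $[\tau]\leq V$ and Lemma~\ref{lem:Manin-Zograf-series}. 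For $j>\sqrt n$, the trivial bound together with Lemma~\ref{lem:top-terms} on $\sqrt n<j\leq (n-4)/2$ and (via the symmetry $j\leftrightarrow n-4-j$) Lemma~\ref{lem:bottom-terms} on $j>(n-4)/2$ both give $O(n^{-1/4})$ with absolute constants, yielding the second series.

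Combining $S_1$ and $S_2$ produces the claim with $C_{\ell+1}$ as stated; convergence of both defining series follows from $[\tau]\leq V$ and Lemma~\ref{lem:Manin-Zograf-series}. The main obstacle is ensuring the big-$O$ constant is uniform in $\ell$. The inductive step produces error of the form $(\tfrac{j_0^2}{\pi^2}c\ell+K)/n^{1/4}$ with $K$ an absolute constant coming from the error analyses above. Since $\tfrac{j_0^2}{\pi^2}\approx 0.586<1$, the inequality $\tfrac{j_0^2}{\pi^2}c\ell+K\leq c(\ell+1)$ holds for all $\ell\geq 1$ provided $c$ is chosen at least $K/(1-\tfrac{j_0^2}{\pi^2})$ and at least the base-case constant from Proposition~\ref{prop:tau_1-asymp}; this closes the induction with a single $\ell$-independent constant.
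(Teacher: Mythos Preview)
Your proof is correct and follows essentially the same route as the paper's: apply the recursion of Lemma~\ref{lem:Mirzakhani-Zograf-(Ib)} in genus zero, split each resulting sum at the midpoint, dispose of the small-index halves via the trivial bound $[\tau]\le V$ and Lemma~\ref{lem:bottom-terms}, and extract the leading constants on the large-index halves using Lemma~\ref{lem:top-terms} together with the inductive hypothesis. Your explicit contraction argument exploiting $j_0^2/\pi^2<1$ to close the induction with a single $\ell$-independent constant is in fact more transparent than the paper's own justification of this uniformity, which is stated rather tersely.
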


\begin{proof}
We proceed by induction on $\ell$. We clearly have $C_{0}=1$ and
Proposition \ref{prop:tau_1-asymp} gives $C_{1}=12\sum_{i=0}^{\infty}\frac{V_{i+3}}{(i+1)!}x_{0}^{i+1}$.
Assuming the result for some $\ell\geq1$, we use Lemma \ref{lem:Mirzakhani-Zograf-(Ib)}
to compute
\begin{align*}
 & \frac{[\tau_{0}^{n-1}\tau_{\ell+1}]_{0,n}}{V_{0,n}}\\
= & 8\sum_{\substack{i=1}
}^{n-3}{n-3 \choose i}\frac{V_{0,i+2}[\tau_{0}^{n-i-1}\tau_{\ell}]_{0,n-i}}{V_{0,n}}+4\sum_{\substack{i=1}
}^{n-3}{n-3 \choose i}\frac{[\tau_{0}^{i+1}\tau_{\ell}]_{0,i+2}V_{0,n-i}}{V_{n}}.\\
= & 8\left(\sum_{i=0}^{\lfloor\frac{n-4}{2}\rfloor}{n-3 \choose i}\frac{V_{0,n-i-1}[\tau_{0}^{i+2}\tau_{\ell}]_{0,i+3}}{V_{0,n}}+\sum_{i=0}^{\lfloor\frac{n-4}{2}\rfloor-\delta_{n\in2\mathbb{Z}}}{n-3 \choose i+1}\frac{V_{0,i+3}[\tau_{0}^{n-i-2}\tau_{\ell}]_{0,n-i-1}}{V_{0,n}}\right)\\
+ & 4\left(\sum_{i=0}^{\lfloor\frac{n-4}{2}\rfloor}{n-3 \choose i}\frac{V_{0,i+3}[\tau_{0}^{n-i-2}\tau_{\ell}]_{0,n-i-1}}{V_{0,n}}+\sum_{i=0}^{\lfloor\frac{n-4}{2}\rfloor-\delta_{n\in2\mathbb{Z}}}{n-3 \choose i+1}\frac{V_{0,n-i-1}[\tau_{0}^{i+2}\tau_{\ell}]_{0,i+3}}{V_{0,n}}\right).
\end{align*}
Now, $[\tau_{0}^{i+2}\tau_{\ell}]_{0,i+3}\leq[\tau_{0}^{i+3}]_{0,i+3}=V_{0,i+3}$
and so by Lemma \ref{lem:bottom-terms},
\[
\sum_{i=0}^{\lfloor\frac{n-4}{2}\rfloor}{n-3 \choose i}\frac{V_{0,n-i-1}[\tau_{0}^{i+2}\tau_{\ell}]_{0,i+3}}{V_{0,n}}=O\left(\frac{1}{n^{\frac{1}{4}}}\right).
\]
Moreover, 
\[
\left|\sum_{i=0}^{\lfloor\frac{n-4}{2}\rfloor-\delta_{n\in2\mathbb{Z}}}{n-3 \choose i+1}\frac{V_{0,i+3}[\tau_{0}^{n-i-2}\tau_{\ell}]_{0,n-i-1}}{V_{0,n}}-C_{\ell}\sum_{i=0}^{\infty}\frac{V_{0,i+3}}{(2\pi^{2})^{i+1}(i+1)!}x_{0}^{i+1}\right|=O\left(\frac{\ell+C_{\ell}+1}{n^{\frac{1}{4}}}\right),
\]
with the implied constant independent of $\ell$. To see this, we
write 
\begin{align*}
 & \left|\sum_{i=0}^{\lfloor\frac{n-4}{2}\rfloor-\delta_{n\in2\mathbb{Z}}}{n-3 \choose i+1}\frac{V_{0,i+3}[\tau_{0}^{n-i-2}\tau_{\ell}]_{0,n-i-1}}{V_{0,n}}-C_{\ell}\sum_{i=0}^{\infty}\frac{V_{0,i+3}}{(2\pi^{2})^{i+1}(i+1)!}x_{0}^{i+1}\right|\\
\leqslant & \left|\sum_{i=0}^{\lfloor\sqrt{n}\rfloor}{n-3 \choose i+1}\frac{V_{0,i+3}[\tau_{0}^{n-i-2}\tau_{\ell}]_{0,n-i-1}}{V_{0,n}}-C_{\ell}\sum_{i=0}^{\lfloor\sqrt{n}\rfloor}\frac{V_{0,i+3}}{(2\pi^{2})^{i+1}(i+1)!}x_{0}^{i+1}\right|\\
+ & C_{\ell}\sum_{i=\lfloor\sqrt{n}\rfloor+1}^{\infty}\frac{V_{0,i+3}}{(2\pi^{2})^{i+1}(i+1)!}x_{0}^{i+1}+\sum_{i=\lfloor\sqrt{n}\rfloor+1}^{\lfloor\frac{n-4}{2}\rfloor-\delta_{n\in2\mathbb{Z}}}{n-3 \choose i+1}\frac{V_{0,i+3}[\tau_{0}^{n-i-2}\tau_{\ell}]_{0,n-i-1}}{V_{0,n}}.
\end{align*}
Then, the latter two terms are $O\left(\frac{C_{\ell}}{n^{\frac{1}{4}}}\right)$
and $O\left(\frac{1}{n^{\frac{1}{4}}}\right)$ respectively by Lemmas
\ref{lem:Manin-Zograf-series} and \ref{lem:top-terms} after bounding
$[\tau_{0}^{n-i-2}\tau_{\ell}]_{0,n-i-1}\leq V_{0,n-i-1}$, where
both implied constants are independent of $\ell$. For the first term,
we use the inductive hypothesis to write $[\tau_{0}^{n-i-2}\tau_{\ell}]_{0,n-i-1}=V_{0,n-i-1}\left(C_{\ell}+O\left(\frac{\ell}{n^{\frac{1}{4}}}\right)\right)$
for $i\leq\lfloor\sqrt{n}\rfloor$ which, combined with the proof
of Lemma \ref{lem:top-terms}, results in 
\begin{align*}
 & \left|\sum_{i=0}^{\lfloor\sqrt{n}\rfloor}{n-3 \choose i+1}\frac{V_{0,i+3}[\tau_{0}^{n-i-2}\tau_{\ell}]_{0,n-i-1}}{V_{0,n}}-C_{\ell}\sum_{i=0}^{\lfloor\sqrt{n}\rfloor}\frac{V_{0,i+3}}{(2\pi^{2})^{i+1}(i+1)!}x_{0}^{i+1}\right|\\
\leqslant & C_{\ell}\left|\sum_{i=0}^{\lfloor\sqrt{n}\rfloor}{n-3 \choose i+1}\frac{V_{0,i+3}V_{0,n-i-1}}{V_{0,n}}-\sum_{i=0}^{\lfloor\sqrt{n}\rfloor}\frac{V_{0,i+3}}{(2\pi^{2})^{i+1}(i+1)!}x_{0}^{i+1}\right|\\
 & +O\left(\frac{\ell}{n^{\frac{1}{4}}}\sum_{i=0}^{\lfloor\sqrt{n}\rfloor}{n-3 \choose i+1}\frac{V_{0,i+3}V_{n-i-1}}{V_{0,n}}\right)=O\left(\frac{C_{\ell}+\ell}{n^{\frac{1}{4}}}\right),
\end{align*}
where the implied constant is independent of $\ell$. We remark that
due to the bound $\left[\tau_{d_{1}},\dots,\tau_{d_{n}}\right]_{g,n}\leqslant V_{g,n}$,
we automatically get $C_{\ell}\leqslant1$ within the inductive hypothesis.
By again using $[\tau_{0}^{n-i-2}\tau_{\ell}]_{0,n-i-1}\leq[\tau_{0}^{n-i-1}]_{0,n-i-1}=V_{0,n-i-1}$
and Lemma \ref{lem:bottom-terms} we obtain
\[
\sum_{i=0}^{\lfloor\frac{n-4}{2}\rfloor}{n-3 \choose i}\frac{V_{0,i+3}[\tau_{0}^{n-i-2}\tau_{\ell}]_{0,n-i-1}}{V_{0,n}}=O\left(\frac{1}{n^{\frac{1}{4}}}\right),
\]
where the implied constant is independent of $\ell$, and using arguments
identical to the proof of Lemma \ref{lem:top-terms} (replacing the
role of $V_{0,i+3}$ with $[\tau_{0}^{i+2}\tau_{\ell}]_{0,i+3}$)
we obtain
\[
\sum_{i=0}^{\lfloor\frac{n-4}{2}\rfloor-\delta_{n\in2\mathbb{Z}}}{n-3 \choose i+1}\frac{V_{0,n-i-1}[\tau_{0}^{i+2}\tau_{\ell}]_{0,i+3}}{V_{n}}=\sum_{i=0}^{\infty}\frac{[\tau_{0}^{i+2}\tau_{\ell}]_{0,i+3}}{(2\pi^{2})^{i+1}(i+1)!}x_{0}^{i+1}+O\left(\frac{1}{n^{\frac{1}{4}}}\right),
\]
where again the implied constant does not depend on $\ell$. It follows
that 
\begin{equation}
\frac{[\tau_{0}^{n-1}\tau_{\ell+1}]_{n}}{V_{0,n}}=8C_{\ell}\sum_{i=0}^{\infty}\frac{V_{0,i+3}}{(2\pi^{2})^{i+1}(i+1)!}x_{0}^{i+1}+4\sum_{i=0}^{\infty}\frac{[\tau_{0}^{i+2}\tau_{\ell}]_{0,i+3}}{(2\pi^{2})^{i+1}(i+1)!}x_{0}^{i+1}+O\left(\frac{\ell+1}{n^{\frac{1}{4}}}\right),\label{eq:final-eq}
\end{equation}
where, since the number of $O\left(\frac{1}{n^{\frac{1}{4}}}\right)$
and $O\left(\frac{\ell+1}{n^{\frac{1}{4}}}\right)$ terms are independent
of $\ell$, the implied constant in (\ref{eq:final-eq}) is independent
of $\ell$, as required.
\end{proof}

\subsection{Computing the constants $C_{\ell}$}

We now seek to compute the constants $C_{\ell}$ using the recursive
relation of Lemma \ref{lem:rec_rel}. Consider the generating function
\[
\Phi_{\ell}(x)=\sum_{i=3}^{\infty}\frac{[\tau_{0}^{i-1}\tau_{\ell}]_{0,i}}{(2\pi^{2})^{i-2}i!}x^{i},
\]
so that
\[
\Phi_{\ell}^{''}(x)=\sum_{i=0}^{\infty}\frac{[\tau_{0}^{i+2}\tau_{\ell}]_{0,i+3}}{(2\pi^{2})^{i+1}(i+1)!}x^{i+1},\hspace{1cm}\Phi_{0}^{''}(x)=\sum_{i=0}^{\infty}\frac{V_{0,i+3}}{(2\pi^{2})^{i+1}(i+1)!}x^{i+1}.
\]
Lemma \ref{lem:rec_rel} motivates us to consider the recurrence relation
\[
\begin{cases}
C_{\ell}(x)=8C_{\ell-1}(x)\Phi_{0}^{''}(x)+4\Phi_{\ell-1}^{''}(x) & \ell\geq2,\\
C_{0}(x)=1,C_{1}(x)=12\Phi_{0}^{''}(x).
\end{cases}
\]
Introduce the function 
\[
F(x,y)=\sum_{m=0}^{\infty}C_{m}(x)y^{m},
\]
so that
\[
F(x,y)-8\Phi_{0}^{''}(x)yF(x,y)=1+4\sum_{m=1}^{\infty}\Phi_{m-1}^{''}(x)y^{m},
\]
and
\[
F(x,y)=\frac{1}{1-8\Phi_{0}^{''}(x)y}\left(1+4\sum_{m=1}^{\infty}\Phi_{m-1}^{''}(x)y^{m}\right).
\]
Differentiating $\ell$ times with respect to $y$ and evaluating
at $y=0$ gives the left hand side equal to $\ell!C_{\ell}$ and the
right hand side can be computed. Indeed, the quotient rule gives
\[
\frac{\partial^{\ell}}{\partial y^{\ell}}\left(\frac{f(x,y)}{g(x,y)}\right)=\sum_{j=0}^{\ell}(-1)^{j}{\ell \choose j}\frac{\partial^{\ell-j}f}{\partial y^{\ell-j}}(x,y)\frac{E_{j}(x,y)}{(g(x,y))^{j+1}},
\]
where 
\[
E_{j}(x,y)=j\frac{\partial g}{\partial y}(x,y)E_{j-1}(x,y)-g(x,y)\frac{\partial E_{j-1}}{\partial y}(x,y).
\]
Applying this in our case, we compute 
\begin{align*}
\frac{\partial^{\ell-j}}{\partial y^{\ell-j}}\left(1+4\sum_{m=1}^{\infty}\Phi_{m-1}^{''}(x)y^{m}\right)(x,0) & =\begin{cases}
1 & j=\ell,\\
4(\ell-j)!\Phi_{\ell-j-1}^{''}(x) & j=0,\ldots,\ell-1,
\end{cases}\\
(g(x,0))^{j+1} & =1,\\
E_{j}(x,0) & =(-8\Phi_{0}^{''}(x))^{j}j!.
\end{align*}
So that formally,
\begin{equation}
C_{\ell}(x)=4\sum_{j=0}^{\ell-1}(8\Phi_{0}^{''}(x))^{j}\Phi_{\ell-j-1}^{''}(x)+(8\Phi_{0}^{''}(x))^{\ell}.\label{eq:C_l-formal}
\end{equation}
Thus we can understand the constants $C_{\ell}$ if we can compute
the second derivative of the generating functions $\Phi_{\ell}(x)$
at $x_{0}$. We observe in the following lemma, that the recursive
formula Lemma \ref{lem:rec_rel} implies that the generating functions
$\Phi_{\ell}$ satisfy a family of ODE's.
\begin{lem}
\label{lem:phi_k-rec}For $\ell\geqslant1$, $x\leqslant x_{0}$,
\[
\Phi_{\ell}^{'''}(x)=8\Phi_{0}^{''}(x)\Phi_{\ell-1}^{'''}(x)+4\Phi_{0}^{'''}(x)\Phi_{\ell-1}^{''}(x).
\]
\end{lem}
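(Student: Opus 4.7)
The plan is to derive the recursion directly from Lemma \ref{lem:Mirzakhani-Zograf-(Ib)} by specializing appropriately and then recognizing the resulting convolution sums as Cauchy products of the series $\Phi_0''$, $\Phi_0'''$, $\Phi_{\ell-1}''$, $\Phi_{\ell-1}'''$.

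First, I would apply Lemma \ref{lem:Mirzakhani-Zograf-(Ib)} with $g=0$, all supplementary indices $d_i=0$, $n$ replaced by $m$, and with the lemma's ``$\ell$'' shifted to $\ell-1$. Since $g-1=-1$, the topological term $[\tau_0^4\tau_{\ell-1}\tau_0^m]_{-1,m+5}$ vanishes. Using the stability conventions that intersection numbers and volumes with non-stable moduli indices are zero (which caps the index range in each sum), this produces
\[
[\tau_0^{m+2}\tau_\ell]_{0,m+3} = 8\sum_{i=0}^{m-1}\binom{m}{i}[\tau_0^{i+2}\tau_{\ell-1}]_{0,i+3}V_{0,m-i+2} + 4\sum_{i=1}^{m}\binom{m}{i}[\tau_0^{i+1}\tau_{\ell-1}]_{0,i+2}V_{0,m-i+3}.
\]

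Second, I would divide by $(2\pi^2)^{m+1}m!$ so that the left-hand side is exactly the coefficient of $x^m$ in $\Phi_\ell'''(x)$. Distributing $\binom{m}{i}/m! = 1/(i!(m-i)!)$ and the powers of $2\pi^2$ appropriately across each factor, the summand of the first sum becomes $\frac{V_{0,m-i+2}}{(2\pi^2)^{m-i}(m-i)!}\cdot\frac{[\tau_0^{i+2}\tau_{\ell-1}]_{0,i+3}}{(2\pi^2)^{i+1}i!}$. After the substitution $j=m-i$, this matches the Cauchy-product coefficient using $\Phi_0''(x)=\sum_{j\geq 1}\frac{V_{0,j+2}}{(2\pi^2)^j j!}x^j$ and $\Phi_{\ell-1}'''(x)=\sum_{k\geq 0}\frac{[\tau_0^{k+2}\tau_{\ell-1}]_{0,k+3}}{(2\pi^2)^{k+1}k!}x^k$, giving the $x^m$-coefficient of $8\,\Phi_0''(x)\Phi_{\ell-1}'''(x)$. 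An analogous computation shows that the second sum is the $x^m$-coefficient of $4\,\Phi_0'''(x)\Phi_{\ell-1}''(x)$.

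Third, summing over $m\geq 0$ yields the asserted identity of formal power series, and this identification is rigorous as an equality of analytic functions for $|x|<x_0$: by Theorem \ref{thm:Manin-Zograf} the series $\Phi_0$ has radius of convergence $x_0$, while Lemma \ref{lem:intersection-triv-bound} gives $[\tau_{d_1}\cdots\tau_{d_n}]_{g,n}\leq V_{g,n}$ so $\Phi_\ell$ also has radius of convergence at least $x_0$ (and termwise differentiation preserves this on the open disk, which is the only region where the Cauchy product is being used). Since all coefficients involved are non-negative, the identity extends to $x=x_0$ by monotone convergence in $[0,+\infty]$.

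The calculation is essentially a matter of careful bookkeeping; the only genuine simplification exploited is the vanishing of the $g-1=-1$ term in the Liu--Xu recursion, which is precisely what makes the genus-$0$ recursion close upon itself and lets the right-hand side be expressed purely in terms of $\Phi_0$, $\Phi_{\ell-1}$. The main obstacles I anticipate are clerical: tracking the index ranges forced by stability (which determine where each sum starts and stops), and splitting the $(2\pi^2)^{m+1}(m!)^{-1}$ prefactor into the two pieces needed to identify the Cauchy product.
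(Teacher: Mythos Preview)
Your proposal is correct and follows essentially the same approach as the paper: specialize the Liu--Xu recursion (Lemma~\ref{lem:Mirzakhani-Zograf-(Ib)}) to $g=0$ with all $d_i=0$, and identify the resulting convolution sums as the Cauchy products $\Phi_0''\Phi_{\ell-1}'''$ and $\Phi_0'''\Phi_{\ell-1}''$. The only cosmetic difference is that the paper writes out $\Phi_\ell$ first and then differentiates three times, whereas you work directly with the coefficient of $x^m$ in $\Phi_\ell'''$; your added remark on convergence at $x=x_0$ via non-negativity is a detail the paper leaves implicit.
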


\begin{proof}
Recalling that 
\[
\Phi_{\ell}(x)=\sum_{i=3}^{\infty}\frac{[\tau_{0}^{i-1}\tau_{\ell}]_{0,i}}{\left(2\pi^{2}\right)^{i-2}i!}x^{i},
\]
Lemma \ref{lem:rec_rel} implies that 
\begin{align*}
\Phi_{\ell}(x)= & \sum_{i=3}^{\infty}\frac{x^{i}}{\left(2\pi^{2}\right)^{i-2}i!}\Big(8\sum_{j=1}^{i-3}{i-3 \choose j}[\tau_{0}^{j+2}\tau_{\ell-1}]_{0,j+3}[\tau_{0}^{i-j-1}]_{0,i-j-1}\\
 & \thinspace\thinspace\thinspace\thinspace\thinspace\thinspace\thinspace\thinspace\thinspace\thinspace\thinspace\thinspace\thinspace\thinspace\thinspace\thinspace\thinspace\thinspace\thinspace\thinspace\thinspace\thinspace\thinspace\thinspace\thinspace\thinspace\thinspace\thinspace\thinspace\thinspace\thinspace\thinspace\thinspace\thinspace\thinspace\thinspace\thinspace\thinspace\thinspace\thinspace\thinspace\thinspace\thinspace\thinspace\thinspace\thinspace\thinspace\thinspace\thinspace\thinspace\thinspace\thinspace\thinspace\thinspace\thinspace\thinspace\thinspace\thinspace\thinspace\thinspace\thinspace\thinspace\thinspace\thinspace\thinspace\thinspace\thinspace\thinspace\thinspace\thinspace\thinspace\thinspace\thinspace\thinspace\thinspace\thinspace\thinspace\thinspace\thinspace+4\sum_{j=1}^{i-3}{i-3 \choose j}[\tau_{0}^{j+1}\tau_{\ell-1}]_{0,j+2}[\tau_{0}^{i-j}]_{0,i-j}\Big)\\
= & \sum_{i=3}^{\infty}\frac{x^{i}}{\left(2\pi^{2}\right)^{i-2}i(i-1)(i-2)}\Big(8\sum_{j=1}^{i-3}\frac{[\tau_{0}^{j+2}\tau_{\ell-1}]_{0,j+3}}{j!}\frac{[\tau_{0}^{i-j-1}]_{0,i-j-1}}{\left(i-3-j\right)!}\\
 & \thinspace\thinspace\thinspace\thinspace\thinspace\thinspace\thinspace\thinspace\thinspace\thinspace\thinspace\thinspace\thinspace\thinspace\thinspace\thinspace\thinspace\thinspace\thinspace\thinspace\thinspace\thinspace\thinspace\thinspace\thinspace\thinspace\thinspace\thinspace\thinspace\thinspace\thinspace\thinspace\thinspace\thinspace\thinspace\thinspace\thinspace\thinspace\thinspace\thinspace\thinspace\thinspace\thinspace\thinspace\thinspace\thinspace\thinspace\thinspace\thinspace\thinspace\thinspace\thinspace\thinspace\thinspace\thinspace\thinspace\thinspace\thinspace\thinspace\thinspace\thinspace\thinspace\thinspace\thinspace\thinspace\thinspace\thinspace\thinspace\thinspace\thinspace\thinspace\thinspace\thinspace\thinspace\thinspace\thinspace\thinspace\thinspace\thinspace\thinspace\thinspace\thinspace\thinspace\thinspace\thinspace\thinspace\thinspace+4\sum_{j=1}^{i-3}\frac{[\tau_{0}^{j+1}\tau_{\ell-1}]_{0,j+2}}{j!}\frac{[\tau_{0}^{i-j}]_{0,i-j}}{\left(i-3-j\right)!}\Big).
\end{align*}
Then differentiating three times, we obtain
\begin{align*}
\Phi_{\ell}^{'''}(x)= & \sum_{i=3}^{\infty}\frac{x^{i-3}}{\left(2\pi^{2}\right)^{i-2}}\Big(8\sum_{j=1}^{i-3}\frac{[\tau_{0}^{j+2}\tau_{\ell-1}]_{0,j+3}}{j!}\frac{[\tau_{0}^{i-j-1}]_{0,i-j-1}}{\left(i-3-j\right)!}\\
 & \thinspace\thinspace\thinspace\thinspace\thinspace\thinspace\thinspace\thinspace\thinspace\thinspace\thinspace\thinspace\thinspace\thinspace\thinspace\thinspace\thinspace\thinspace\thinspace\thinspace\thinspace\thinspace\thinspace\thinspace\thinspace\thinspace\thinspace\thinspace\thinspace\thinspace\thinspace\thinspace\thinspace\thinspace\thinspace\thinspace\thinspace\thinspace\thinspace\thinspace\thinspace\thinspace\thinspace\thinspace\thinspace\thinspace\thinspace\thinspace\thinspace\thinspace\thinspace\thinspace\thinspace\thinspace\thinspace\thinspace\thinspace\thinspace\thinspace\thinspace\thinspace\thinspace\thinspace\thinspace\thinspace\thinspace\thinspace\thinspace\thinspace\thinspace\thinspace\thinspace\thinspace\thinspace\thinspace\thinspace\thinspace\thinspace\thinspace\thinspace\thinspace\thinspace\thinspace\thinspace\thinspace+4\sum_{j=1}^{i-3}\frac{[\tau_{0}^{j+1}\tau_{\ell-1}]_{0,j+2}}{j!}\frac{[\tau_{0}^{i-j}]_{0,i-j}}{\left(i-3-j\right)!}\Big)\\
= & 8\left(\sum_{n=0}^{\infty}\frac{[\tau_{0}^{n+2}\tau_{\ell-1}]_{0,n+3}}{\left(2\pi^{2}\right)^{n+1}n!}x^{n}\right)\left(\sum_{n=0}^{\infty}\frac{[\tau_{0}^{n+2}]_{0,n+2}}{\left(2\pi^{2}\right)^{n}n!}x^{n}\right)\\
 & \thinspace\thinspace\thinspace\thinspace\thinspace\thinspace\thinspace\thinspace\thinspace\thinspace\thinspace\thinspace\thinspace\thinspace\thinspace\thinspace\thinspace\thinspace\thinspace\thinspace\thinspace\thinspace\thinspace\thinspace\thinspace\thinspace\thinspace\thinspace\thinspace\thinspace\thinspace\thinspace\thinspace\thinspace\thinspace\thinspace\thinspace\thinspace\thinspace\thinspace\thinspace\thinspace\thinspace\thinspace\thinspace\thinspace\thinspace\thinspace\thinspace\thinspace\thinspace\thinspace\thinspace\thinspace\thinspace\thinspace\thinspace\thinspace\thinspace\thinspace\thinspace\thinspace\thinspace\thinspace\thinspace\thinspace\thinspace\thinspace\thinspace\thinspace\thinspace\thinspace\thinspace+4\left(\sum_{n=0}^{\infty}\frac{[\tau_{0}^{n+1}\tau_{\ell-1}]_{0,n+2}}{\left(2\pi^{2}\right)^{n}n!}x^{n}\right)\left(\sum_{n=0}^{\infty}\frac{[\tau_{0}^{n+3}]_{0,n+3}}{\left(2\pi^{2}\right)^{n+1}n!}x^{n}\right).
\end{align*}
It follows that 
\[
\Phi_{\ell}^{'''}(x)=8\Phi_{0}^{''}(x)\Phi_{\ell-1}^{'''}(x)+4\Phi_{0}^{'''}(x)\Phi_{\ell-1}^{''}(x),
\]
as claimed.
\end{proof}
\begin{lem}
\label{lem:phi-eval}For $\ell\geqslant1$ and $x\leqslant x_{0}$,
\[
\Phi_{\ell}^{''}(x)=\frac{2^{3\ell+1}}{\sqrt{\pi}}\frac{\Gamma(\ell+\frac{3}{2})}{\Gamma(\ell+2)}\Phi_{0}^{''}(x)^{\ell+1}.
\]
\end{lem}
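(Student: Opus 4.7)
The plan is to prove by induction on $\ell$ that on the interval $[0,x_0]$ one has
\[
\Phi_\ell''(x) \;=\; a_\ell\,\Phi_0''(x)^{\ell+1},
\]
where $a_\ell$ satisfies the one-step recurrence
\[
a_\ell \;=\; \frac{4(2\ell+1)}{\ell+1}\,a_{\ell-1}, \qquad a_0 = 1,
\]
and then to check that the closed form $\frac{2^{3\ell+1}}{\sqrt{\pi}}\frac{\Gamma(\ell+3/2)}{\Gamma(\ell+2)}$ solves this recurrence.

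For the inductive step, assume $\Phi_{\ell-1}''(x) = a_{\ell-1}\Phi_0''(x)^\ell$. Differentiating gives $\Phi_{\ell-1}'''(x) = \ell\,a_{\ell-1}\Phi_0''(x)^{\ell-1}\Phi_0'''(x)$, and substituting both expressions into the recursion of Lemma \ref{lem:phi_k-rec} yields
\[
\Phi_\ell'''(x) \;=\; (8\ell+4)\,a_{\ell-1}\,\Phi_0''(x)^{\ell}\,\Phi_0'''(x) \;=\; \frac{4(2\ell+1)\,a_{\ell-1}}{\ell+1}\,\frac{d}{dx}\!\left[\Phi_0''(x)^{\ell+1}\right].
\]
The key observation is that the right-hand side is an \emph{exact} derivative. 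Integrating from $0$ to $x$ and using the fact that both $\Phi_\ell''(0)=0$ and $\Phi_0''(0)=0$ (which follows immediately from the definition of $\Phi_\ell$, whose power series begins at order $x^3$) eliminates the constant of integration, and we obtain
\[
\Phi_\ell''(x) \;=\; \frac{4(2\ell+1)\,a_{\ell-1}}{\ell+1}\,\Phi_0''(x)^{\ell+1},
\]
which gives both the ansatz and the stated recurrence for $a_\ell$. The base case $\ell=0$ holds trivially with $a_0=1$, and the case $\ell=1$ has already been handled (in slightly different guise) as the computation $C_1 = 12\Phi_0''$ in Lemma \ref{lem:rec_rel}.

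To close the argument, I would solve the recurrence by unrolling:
\[
a_\ell \;=\; \frac{4^\ell\prod_{j=1}^{\ell}(2j+1)}{(\ell+1)!} \;=\; \frac{2^\ell\,(2\ell+1)!}{\ell!\,(\ell+1)!},
\]
using $\prod_{j=1}^{\ell}(2j+1) = (2\ell+1)!/(2^\ell\ell!)$. Finally, the half-integer Gamma identity $\Gamma(\ell+3/2) = \frac{(2\ell+2)!}{4^{\ell+1}(\ell+1)!}\sqrt{\pi}$ combined with $\Gamma(\ell+2)=(\ell+1)!$ converts this factorial expression into the stated form $a_\ell=\frac{2^{3\ell+1}}{\sqrt{\pi}}\frac{\Gamma(\ell+3/2)}{\Gamma(\ell+2)}$; a direct check at $\ell=0,1$ confirms the constants match.

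There is no real obstacle here: the only nontrivial insight is spotting the total-derivative structure that makes the recursion of Lemma \ref{lem:phi_k-rec} collapse, together with the observation that the series definition of $\Phi_\ell$ forces the integration constant to vanish. Everything else is bookkeeping with factorials and the Gamma function.
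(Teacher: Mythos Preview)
Your proof is correct and follows essentially the same approach as the paper: both use the recursion of Lemma~\ref{lem:phi_k-rec}, recognise a total derivative, integrate using $\Phi_\ell''(0)=0$, and arrive at the one-step recurrence $A_\ell=\frac{4(2\ell+1)}{\ell+1}A_{\ell-1}$, which is then solved in closed form. Your route is in fact slightly more streamlined than the paper's: the paper first unrolls the recursion iteratively to obtain an intermediate identity $\Phi_\ell''(x)=\sum_{j=1}^\ell\bigl(\prod_{i=1}^{j-1}(8-\tfrac{4}{i})\bigr)\tfrac{4}{j}\,\Phi_0''(x)^j\Phi_{\ell-j}''(x)$, uses this to justify the ansatz $\Phi_\ell''=A_\ell\Phi_0''^{\,\ell+1}$, and only then substitutes back to extract the recurrence for $A_\ell$; you bypass that intermediate step by assuming the ansatz inductively from the start, which makes the total-derivative structure visible immediately.
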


\begin{proof}
Using the recurrence relation
\[
\Phi_{\ell}^{'''}(x)=8\Phi_{0}^{''}(x)\Phi_{\ell-1}^{'''}(x)+4\Phi_{0}^{'''}(x)\Phi_{\ell-1}^{''}(x)
\]
from Lemma \ref{lem:phi_k-rec} repeatedly we obtain
\begin{align*}
\Phi_{\ell}^{'''}(x)= & 4\frac{d}{dx}\left(\Phi_{0}^{''}(x)\Phi_{\ell-1}^{''}(x)\right)+4\Phi_{0}^{''}(x)\Phi_{\ell-1}^{'''}(x)\\
= & 4\frac{d}{dx}\left(\Phi_{0}^{''}(x)\Phi_{\ell-1}^{''}(x)\right)+4\left[8\Phi_{0}^{''}(x)^{2}\Phi_{\ell-2}^{'''}(x)+4\Phi_{0}^{'''}(x)\Phi_{0}^{''}(x)\Phi_{\ell-2}^{''}(x)\right]\\
= & 4\frac{d}{dx}\left(\Phi_{0}^{''}(x)\Phi_{\ell-1}^{''}(x)\right)+4\cdot\frac{8}{2}\frac{d}{dx}\left(\Phi_{0}^{''}(x)^{2}\Phi_{\ell-2}^{''}(x)\right)\\
 & +4\left(8-\frac{4}{2}\right)\left[8\Phi_{0}^{''}(x)^{3}\Phi_{\ell-2}^{'''}(x)+4\Phi_{0}^{''}(x)^{2}\Phi_{0}^{'''}(x)\Phi_{\ell-2}^{''}(x)\right]\\
= & \dots\\
= & \sum_{j=1}^{\ell}\left(\prod_{i=1}^{j-1}\left(8-\frac{4}{i}\right)\right)\frac{4}{j}\frac{d}{dx}\left(\Phi_{0}^{''}(x)^{j}\Phi_{\ell-j}^{''}(x)\right).
\end{align*}
Then integrating from $0$ to $x$ and using that $\Phi_{\ell}^{''}(0)=0$
for every $\ell$ gives that 
\begin{equation}
\Phi_{\ell}^{''}(x)=\sum_{j=1}^{\ell}\left(\prod_{i=1}^{j-1}\left(8-\frac{4}{i}\right)\right)\frac{4}{j}\Phi_{0}^{''}(x)^{j}\Phi_{\ell-j}^{''}(x).\label{eq:A-l-recurs}
\end{equation}
It follows from the recursive formula (\ref{eq:A-l-recurs}) that
there for every $l\geqslant0$ is an $A_{l}$ such that
\[
\Phi_{\ell}^{''}(x)=A_{\ell}\Phi_{0}^{''}(x)^{\ell+1}.
\]
Inserting this back into the recursive relation we obtain 
\[
A_{\ell}(\ell+1)\Phi_{0}^{''}(x)^{\ell}\Phi_{0}^{'''}(x)=8\ell A_{\ell-1}\Phi_{0}^{''}(x)^{\ell}\Phi_{0}^{'''}(x)+4A_{\ell-1}\Phi_{0}^{'''}(x)\Phi_{0}^{''}(x)^{\ell}.
\]
After cancelling, we then see that 
\[
A_{\ell}=\frac{4(2\ell+1)}{\ell+1}A_{\ell-1},
\]
and we know that $A_{0}=1,$ so that
\[
A_{\ell}=\prod_{m=1}^{\ell}\frac{4(2m+1)}{m+1}=\frac{2^{3\ell+1}}{\sqrt{\pi}}\frac{\Gamma(\ell+\frac{3}{2})}{\Gamma(\ell+2)}.
\]
\end{proof}
\begin{lem}
\label{lem:C_l}For each $\ell\geqslant1$, 
\[
C_{\ell}=\left(\frac{j_{0}}{\pi}\right)^{2\ell}\frac{(2\ell+1)}{\sqrt{\pi}}\frac{\Gamma(\ell+\frac{1}{2})}{\Gamma(\ell+1)}.
\]
\end{lem}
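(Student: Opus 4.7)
The plan is to specialise the formal generating-function identity for $C_\ell(x)$ at $x=x_0$ and then plug in the closed-form expression for $\Phi_\ell''(x)$ furnished by Lemma~\ref{lem:phi-eval}.

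First I would note that by Lemma~\ref{lem:Manin-Zograf-series},
\[
\Phi_0''(x_0)=\sum_{i=0}^{\infty}\frac{V_{0,i+3}\,x_0^{i+1}}{(2\pi^2)^{i+1}(i+1)!}=\frac{j_0^2}{8\pi^2},
\]
so that in particular
\[
8\Phi_0''(x_0)=\frac{j_0^2}{\pi^2}.
\]
Convergence at $x=x_0$ of all the series $\Phi_\ell''(x)$ is guaranteed by the bound $[\tau_0^{i-1}\tau_\ell]_{0,i}\le V_{0,i}$ (Lemma~\ref{lem:intersection-triv-bound}) combined with Lemma~\ref{lem:Manin-Zograf-series}, so the identity (\ref{eq:C_l-formal}) for $C_\ell(x)$ is valid at $x=x_0$ with $C_\ell=C_\ell(x_0)$.

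Next I would evaluate each summand using Lemma~\ref{lem:phi-eval}. Setting $u\eqdf 8\Phi_0''(x_0)=j_0^2/\pi^2$, that lemma gives
\[
\Phi_m''(x_0)=\frac{2^{3m+1}}{\sqrt{\pi}}\frac{\Gamma(m+\tfrac{3}{2})}{\Gamma(m+2)}\Bigl(\tfrac{u}{8}\Bigr)^{m+1}=\frac{1}{4\sqrt{\pi}}\frac{\Gamma(m+\tfrac{3}{2})}{\Gamma(m+2)}u^{m+1}.
\]
Substituting into (\ref{eq:C_l-formal}) and re-indexing by $i=\ell-j$ would yield
\[
C_\ell=u^\ell+\frac{u^\ell}{\sqrt{\pi}}\sum_{i=1}^{\ell}\frac{\Gamma(i+\tfrac{1}{2})}{\Gamma(i+1)}=\frac{u^\ell}{\sqrt{\pi}}\sum_{i=0}^{\ell}\frac{\Gamma(i+\tfrac{1}{2})}{\Gamma(i+1)},
\]
where the $i=0$ term absorbs the stand-alone $u^\ell$ since $\Gamma(\tfrac{1}{2})=\sqrt{\pi}$.

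The remaining task — the only step that requires a small independent argument — is the Gamma-function identity
\[
\sum_{i=0}^{\ell}\frac{\Gamma(i+\tfrac{1}{2})}{\Gamma(i+1)}=(2\ell+1)\frac{\Gamma(\ell+\tfrac{1}{2})}{\Gamma(\ell+1)}.
\]
This I would prove by a short induction on $\ell$: the base case $\ell=0$ is immediate, and the induction step uses $\Gamma(\ell+\tfrac{3}{2})=(\ell+\tfrac{1}{2})\Gamma(\ell+\tfrac{1}{2})$ and $\Gamma(\ell+2)=(\ell+1)\Gamma(\ell+1)$ to reduce the bracket $\frac{(2\ell+1)(\ell+1)}{\ell+1/2}+1$ to $2\ell+3$. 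Feeding this identity back gives
\[
C_\ell=\frac{u^\ell(2\ell+1)}{\sqrt{\pi}}\frac{\Gamma(\ell+\tfrac{1}{2})}{\Gamma(\ell+1)}=\left(\frac{j_0}{\pi}\right)^{\!2\ell}\frac{(2\ell+1)}{\sqrt{\pi}}\frac{\Gamma(\ell+\tfrac{1}{2})}{\Gamma(\ell+1)},
\]
which is the claim. No step is genuinely hard; the only place to be careful is the bookkeeping of the index shift in (\ref{eq:C_l-formal}) and the justification that the formal manipulation holds at the boundary point $x=x_0$, both of which follow from the absolute convergence estimates of Section~\ref{sec:Preliminary-Volume-Computations}.
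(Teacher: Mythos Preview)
Your proof is correct and follows essentially the same approach as the paper: both specialise the formal identity (\ref{eq:C_l-formal}) at $x_0$, insert the closed form for $\Phi_m''$ from Lemma~\ref{lem:phi-eval}, and reduce to the same Gamma-function sum identity (which the paper simply states, while you supply the short induction). The only additions on your side are the explicit justification of convergence at $x=x_0$ and the verification of the telescoping Gamma identity, both of which are welcome elaborations rather than departures.
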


\begin{proof}
By (\ref{eq:C_l-formal}) we have 
\[
C_{\ell}=4\sum_{j=0}^{\ell-1}(8\Phi_{0}^{''}(x_{0}))^{j}\Phi_{\ell-j-1}^{''}(x_{0})+(8\Phi_{0}^{''}(x_{0}))^{\ell},
\]
and so from Lemma \ref{lem:phi-eval}
\[
C_{\ell}=8^{\ell}\Phi_{0}^{''}(x_{0})^{\ell}\left(\frac{1}{\sqrt{\pi}}\sum_{j=0}^{\ell-1}\frac{\Gamma(\ell-j+\frac{1}{2})}{\Gamma(\ell-j+1)}+1\right)=8^{\ell}\Phi_{0}^{''}(x_{0})^{\ell}\frac{(2\ell+1)}{\sqrt{\pi}}\frac{\Gamma(\ell+\frac{1}{2})}{\Gamma(\ell+1)}.
\]
The result then follows from the fact that $\Phi_{0}^{''}(x_{0})=\frac{j_{0}^{2}}{8\pi^{2}}$
from Lemma \ref{lem:Manin-Zograf-series}.
\end{proof}

\subsection{Proof of Theorems \ref{thm:I0-asymptotic} and \ref{thm:int-asymptotic}}

To prove the remainder of the intersection number asymptotics, we
will again work inductively using the relations of Lemma \ref{lem:rec_rel}
and the computation of the constants $C_{\ell}$. 
\begin{proof}[Proof of Theorem \ref{thm:int-asymptotic}.]
 We proceed by induction on $k$. When $k=0$, the result is obvious
so let us assume that the result is true for some $1\leq k<\frac{1}{8}\log(n)$.
We wish to show that for any $\ell\leq3g+n-3-\sum_{i=1}^{k}d_{i}$
that 
\[
\frac{[\tau_{0}^{2}\tau_{0}^{n-(k+1)-2}\tau_{\ell}\tau_{d_{1}}\cdots\tau_{d_{k}}]_{g,n}}{V_{g,n}}=C_{\ell}\prod_{i=1}^{k}C_{d_{i}}+O_{g}\left(\frac{\sum_{i=1}^{k}d_{i}+\ell}{n^{\frac{1}{4}}}\right),
\]
where the implied constant is independent of $d_{1},\dots,d_{k},\ell$.
In the following, implied constants in $O$'s will be independent
of $n,d_{1},\dots,d_{k}$ and $\ell$ unless specified otherwise.
We shall induct on $\ell$ and note that the base case of $\ell=0$
is simply the inductive hypothesis for the induction in $k$ as $C_{0}=1$.
Let $d'=(\underbrace{0,\ldots,0}_{n-(k+1)-2},d_{1},\ldots,d_{k})$
so that by Lemma \ref{lem:Mirzakhani-Zograf-(Ib)},
\begin{align*}
\frac{[\tau_{0}^{2}\tau_{0}^{n-(k+1)-2}\tau_{\ell}\tau_{d_{1}}\cdots\tau_{d_{k}}]_{g,n}}{V_{g,n}}= & \underbrace{\frac{[\tau_{0}^{4}\tau_{\ell}\prod_{i=1}^{k}\tau_{d_{i}}]_{g-1,n+2}}{V_{g,n}}}_{(1)}\\
 & +\frac{8}{V_{g,n}}\underbrace{\sum_{\substack{\substack{I\sqcup J=\{1,...,n-3\}\\
g_{1}+g_{2}=g
}
}
}[\tau_{0}^{2}\tau_{\ell-1}\prod_{i\in I}\tau_{d_{i}'}]_{g_{1},|I|+3}[\tau_{0}^{2}\prod_{i\in J}\tau_{d_{i}'}]_{g_{2},|J|+2}}_{(2)}\\
 & +\frac{4}{V_{g,n}}\underbrace{\sum_{\substack{\substack{I\sqcup J=\{1,...,n-3\}\\
g_{1}+g_{2}=g
}
}
}[\tau_{0}\tau_{\ell-1}\prod_{i\in I}\tau_{d_{i}'}]_{g_{1},|I|+2}[\tau_{0}^{3}\prod_{i\in J}\tau_{d_{i}'}]_{g_{2},|J|+3}}_{(3)}.
\end{align*}
Term (1) can be controlled easily using Theorem \ref{thm:Manin-Zograf}
since
\begin{align}
\frac{[\tau_{0}^{4}\tau_{\ell}\prod_{i=1}^{k}\tau_{d_{i}}]_{g-1,n+2}}{V_{g,n}} & \leq\frac{V_{g-1,n+2}}{V_{g,n}}=\frac{(n+2)!}{n!}\frac{(n+3)^{\frac{5(g-1)-7}{2}}}{(n+1)^{\frac{5g-7}{2}}}x_{0}^{-2}(2\pi^{2})^{-1}\left(1+O_{g}\left(\frac{1}{n}\right)\right)\ll_{g}\frac{1}{n^{\frac{1}{2}}}.\label{eq:lower-genus}
\end{align}
Next we bound (2). Consider the summands where $|I|\leq\left\lfloor \frac{n-4}{2}\right\rfloor $
for which we bound both of the intersection numbers by the respective
moduli space volumes 
\[
[\tau_{0}^{2}\tau_{\ell-1}\prod_{i\in I}\tau_{d_{i}'}]_{g_{1},|I|+3}[\tau_{0}^{2}\prod_{i\in J}\tau_{d_{i}'}]_{g_{2},|J|+2}\leq V_{g_{1},|I|+3}V_{g_{2},|J|+2}.
\]
The contribution of these summands is thus bounded by
\[
\sum_{g_{1}=0}^{g}\sum_{i=0}^{\left\lfloor \frac{n-4}{2}\right\rfloor }{n-3 \choose i}\frac{V_{g_{1},i+3}V_{g-g_{1},n-i-1}}{V_{g,n}}=O_{g}\left(\frac{1}{n^{\frac{1}{4}}}\right),
\]
using Lemma \ref{lem:bottom-terms}. For the summands where $|I|>\left\lfloor \frac{n-4}{2}\right\rfloor $,
we split into two cases: 
\begin{enumerate}
\item $n-(k+1)-1,\ldots,n-3\in I$ .
\item At least one of the indices $n-(k+1)-1,\ldots,n-3$ does not lie in
$I$.
\end{enumerate}
For summands where the first case occurs, we have by the inductive
hypothesis in $\ell$ that
\begin{align*}
[\tau_{0}^{2}\tau_{\ell-1}\prod_{i\in I}\tau_{d_{i}'}]_{g_{1},|I|+3} & =[\tau_{0}^{2+|I|-k}\tau_{\ell-1}\prod_{i=1}^{k}\tau_{d_{i}}]_{g_{1},|I|+3}\\
 & =V_{g_{1},|I|+3}\left(C_{\ell-1}\prod_{i=1}^{k}C_{d_{i}}+O_{g}\left(\frac{\sum_{i=1}^{k}d_{i}+\ell-1}{n^{\frac{1}{4}}}\right)\right),
\end{align*}
as $|I|>\left\lfloor \frac{n-4}{2}\right\rfloor $, where the implied
constant is independent of $d_{1},\dots,d_{k},\ell$, and 
\[
[\tau_{0}^{2}\prod_{i\in J}\tau_{d_{i}'}]_{g_{2},|J|+2}=[\tau_{0}^{|J|+2}]_{g_{2},|J|+2}=V_{g_{2},|J|+2}.
\]
 Thus, the contribution of these summands is given by 
\[
8\sum_{g_{1}=0}^{g}\sum_{i=\left\lfloor \frac{n-4}{2}\right\rfloor +1}^{n-4}{n-3-k \choose i-k}\frac{V_{g_{1},i+3}V_{g-g_{1},n-i-1}}{V_{g,n}}\left(C_{\ell-1}\prod_{i=1}^{k}C_{d_{i}}+O_{g}\left(\frac{\sum_{i=1}^{k}d_{i}+\ell-1}{n^{\frac{1}{4}}}\right)\right),
\]
since after assigning the indices $n-(k+1)-1,\ldots,n-3$ to $I$
there are $i-k$ elements of $I$ left to choose and $n-3-k$ elements
to choose them from. Re-indexing this summation we obtain 
\[
8\sum_{g_{1}=0}^{g}\sum_{i=0}^{\left\lfloor \frac{n-4}{2}\right\rfloor -\delta_{n\in2\mathbb{Z}}}{n-3-k \choose i+1}\frac{V_{g-g_{1},i+3}V_{g_{1},n-i-1}}{V_{g,n}}\left(C_{\ell-1}\prod_{i=1}^{k}C_{d_{i}}+O_{g}\left(\frac{\sum_{i=1}^{k}d_{i}+\ell-1}{n^{\frac{1}{4}}}\right)\right).
\]
which is equal to $8\Phi_{0}^{''}(x_{0})C_{\ell-1}\prod_{i=1}^{k}C_{d_{i}}+O_{g}\left(\frac{\sum_{i=1}^{k}d_{i}+\ell}{n^{\frac{1}{4}}}\right)$.
To see this, we note that by Lemma \ref{lem:top-terms}, we have 
\begin{align*}
 & \left|\sum_{i=0}^{\left\lfloor \frac{n-4}{2}\right\rfloor -\delta_{n\in2\mathbb{Z}}}{n-3-k \choose i+1}\frac{V_{0,i+3}V_{g,n-i-1}}{V_{g,n}}C_{\ell-1}\prod_{i=1}^{k}C_{d_{i}}-\Phi_{0}^{''}(x_{0})C_{\ell-1}\prod_{i=1}^{k}C_{d_{i}}\right|\\
\leqslant & \delta_{n\in2\mathbb{Z}}{n-3-k \choose \frac{n}{2}-1}\frac{V_{0,\frac{n}{2}+1}V_{g,\frac{n}{2}+1}}{V_{g,n}}C_{\ell-1}\prod_{i=1}^{k}C_{d_{i}}+O_{g}\left(\frac{\sum_{i=1}^{k}d_{i}+\ell-1}{n^{\frac{1}{4}}}\right).
\end{align*}
 By Theorem \ref{thm:Manin-Zograf} we have 
\begin{align*}
 & {n-3-k \choose \frac{n}{2}-1}\frac{V_{0,\frac{n}{2}+1}V_{g,\frac{n}{2}+1}}{V_{g,n}}\\
= & \frac{(n-3-k)!}{\left(\frac{n}{2}-1\right)!\left(\frac{n}{2}-2-k\right)!}\frac{\left(\frac{n}{2}+1\right)!^{2}}{n!}\left(\frac{n}{2}+2\right)^{\frac{5g-14}{2}}(n+1)^{-\frac{5g-7}{2}}x_{0}^{-2}(2\pi^{2})^{-1}\left(B_{0}+O_{g}\left(\frac{1}{n}\right)\right)=O_{g}\left(\frac{1}{n^{\frac{3}{2}}}\right).
\end{align*}
Using that each $C_{d_{i}}\leqslant1$ (which we get for free by the
fact that $\left[\prod\tau_{d_{i}}\right]_{g,n}\leqslant V_{g,n}$),
the $g_{1}=g$ term is equal to $8\Phi_{0}^{''}(x_{0})C_{\ell-1}\prod_{i=1}^{k}C_{d_{i}}+O_{g}\left(\frac{\sum_{i=1}^{k}d_{i}+\ell}{n^{\frac{1}{4}}}\right)$.
For $0\leq g_{1}<g$, we use Lemma \ref{lem:top-terms} to obtain
\begin{align*}
 & \sum_{g_{1}=0}^{g-1}\sum_{i=0}^{\left\lfloor \frac{n-4}{2}\right\rfloor -\delta_{n\in2\mathbb{Z}}}{n-3-k \choose i+1}\frac{V_{g-g_{1},i+3}V_{g_{1},n-i-1}}{V_{g,n}}\left(C_{\ell-1}\prod_{i=1}^{k}C_{d_{i}}+O_{g}\left(\frac{\sum_{i=1}^{k}d_{i}+\ell-1}{n^{\frac{1}{4}}}\right)\right)\\
 & \ll_{g}\sum_{g_{1}=0}^{g-1}\frac{1}{n^{\frac{1}{4}}}\left(C_{\ell-1}\prod_{i=1}^{k}C_{d_{i}}+O_{g}\left(\frac{\sum_{i=1}^{k}d_{i}+\ell-1}{n^{\frac{1}{4}}}\right)\right)\ll_{g}\frac{\sum_{i=1}^{k}d_{i}+\ell}{n^{\frac{1}{4}}},
\end{align*}
and so the claim follows.

Next, we consider the contribution to (2) from the summands where
at least one of the indices $n-(k+1)-1,\ldots,n-3$ does not lie in
$I$ and $|I|>\left\lfloor \frac{n-4}{2}\right\rfloor $. This is
equal to 
\[
8\sum_{a=0}^{k-1}\sum_{g_{1}=0}^{g}\sum_{\substack{I\sqcup J=\{1,...,n-3\}\\
|I|>\left\lfloor \frac{n-4}{2}\right\rfloor \\
I\text{ has exactly \ensuremath{a} indices from \ensuremath{\{n-(k+1)-1,...,n-3\}}}
}
}\frac{[\tau_{0}^{2}\tau_{\ell-1}\prod_{i\in I}\tau_{d_{i}'}]_{g_{1},|I|+3}[\tau_{0}^{2}\prod_{i\in J}\tau_{d_{i}'}]_{g-g_{1},|J|+2}}{V_{g,n}}.
\]
We use the trivial bound $[\tau_{0}^{2}\tau_{\ell-1}\prod_{i\in I}\tau_{d_{i}'}]_{g_{1},|I|+3}[\tau_{0}^{2}\prod_{i\in J}\tau_{d_{i}'}]_{g-g_{1},|J|+2}\leq V_{g_{1},|I|+3}V_{g-g_{1},|J|+2}$
so that the sum has an upper bound of the form 
\begin{align*}
 & 8\sum_{a=0}^{k-1}\sum_{g_{1}=0}^{g}\sum_{\substack{i=\left\lfloor \frac{n-4}{2}\right\rfloor +1}
}^{n-3-(k-a)}{k \choose a}{n-3-k \choose i-a}\frac{V_{g_{1},i+3}V_{g-g_{1},n-i-1}}{V_{g,n}}\\
= & 8\sum_{a=0}^{k-1}\sum_{g_{1}=0}^{g}\sum_{\substack{i=k-a-1}
}^{\left\lfloor \frac{n-4}{2}\right\rfloor -\delta_{n\in2\mathbb{Z}}}{k \choose a}{n-3-k \choose i+1+a-k}\frac{V_{g-g_{1},i+3}V_{g_{1},n-i-1}}{V_{g,n}}.
\end{align*}
Using Theorem \ref{thm:Manin-Zograf} we see that since $k\leq\frac{1}{8}\log(n)$
and $i\leq\left\lfloor \frac{n-4}{2}\right\rfloor $,
\begin{align*}
{n-3-k \choose i+1+a-k}\frac{V_{g_{1},n-i-1}}{V_{g,n}}\leqslant & \frac{1}{(n+1)^{\frac{5(g-g_{1})}{2}}}\frac{2^{\frac{7}{2}}}{n-a-3}\frac{1}{i!}\frac{x_{0}^{i+1}}{\left(2\pi^{2}\right)^{3(g-g_{1})+i+1}}\\
 & \cdot\left(\prod_{p=0}^{a+2}\frac{n-i-(p+1)}{n-p}\right)\left(\prod_{q=1}^{k-a-1}\frac{i-q}{n-(q+a+3)}\right)\left(1+O_{g}\left(\frac{1}{n}\right)\right)\\
\leqslant & \frac{C}{(n+1)^{\frac{5(g-g_{1})}{2}}}\frac{1}{n^{\frac{3}{8}}}\frac{1}{(i+4)^{\frac{5}{8}}}\frac{1}{i!}\frac{x_{0}^{i+1}}{\left(2\pi^{2}\right)^{3(g-g_{1})+i+1}}\left(1+O_{g}\left(\frac{1}{n}\right)\right)
\end{align*}
Thus,
\begin{align*}
 & 8\sum_{a=0}^{k-1}\sum_{g_{1}=0}^{g}\sum_{\substack{i=k-a-1}
}^{\left\lfloor \frac{n-4}{2}\right\rfloor -\delta_{n\in2\mathbb{Z}}}{k \choose a}{n-3-k \choose i+1+a-k}\frac{V_{g-g_{1},i+3}V_{g_{1},n-i-1}}{V_{g,n}}\\
 & \ll_{g}\frac{1}{n^{\frac{3}{8}}}\sum_{g_{1}=0}^{g}\frac{1}{(n+1)^{\frac{5(g-g_{1})}{2}}}\sum_{a=0}^{k-1}{k \choose a}\sum_{i=k-a-1}^{\left\lfloor \frac{n-4}{2}\right\rfloor }(i+4)^{-\frac{5}{8}}\frac{V_{g-g_{1},i+3}}{i!}\frac{x_{0}^{i+1}}{\left(2\pi^{2}\right)^{3(g-g_{1})+i+1}}\\
 & \ll_{g}\frac{g2^{k}}{n^{\frac{3}{8}}}\ll_{g}\frac{1}{n^{\frac{1}{4}}},
\end{align*}
where we use the fact that 
\[
\frac{1}{(n+1)^{\frac{5(g-g_{1})}{2}}}\frac{V_{g-g_{1},i+3}}{i!}\frac{x_{0}^{i+1}}{\left(2\pi^{2}\right)^{3(g-g_{1})+i+1}}\ll_{g}\frac{(i+3)!}{i!}\frac{(i+4)^{\frac{5(g-g_{1})-7}{2}}}{(n+1)^{\frac{5(g-g_{1})}{2}}}\ll_{g}(i+4)^{-\frac{1}{2}}.
\]
In conclusion, we find that
\[
(2)=8\Phi_{0}^{''}(x_{0})C_{\ell-1}\prod_{i=1}^{k}C_{d_{i}}+O\left(\frac{\sum_{i=1}^{k}d_{i}+\ell}{n^{\frac{1}{4}}}\right).
\]
We evaluate (3) in a similar manner. If $|I|>\left\lfloor \frac{n-4}{2}\right\rfloor $,
then we use trivial volume bounds on the intersection numbers and
obtain an upper bound from this contribution of
\begin{align*}
 & \frac{4}{V_{g,n}}\sum_{g_{1}=0}^{g}\sum_{\substack{i=\left\lfloor \frac{n-4}{2}\right\rfloor +1}
}^{n-3}{n-3 \choose i}V_{g_{1},i+2}V_{g-g_{1},n-i}\\
= & \frac{4}{V_{g,n}}\sum_{g_{1}=0}^{g}\sum_{\substack{i=0}
}^{\left\lfloor \frac{n-4}{2}\right\rfloor }{n-3 \choose i}V_{g-g_{1},i+3}V_{g_{1},n-i-1}\ll_{g}\frac{1}{n^{\frac{1}{4}}},
\end{align*}
by Lemma \ref{lem:bottom-terms}. When $|I|\leq\left\lfloor \frac{n-4}{2}\right\rfloor $,
we split between the case where $n-(k+1)-1,\ldots,n-3\in J$ or not.
When they are in $J$ the contribution comes from the term when $g_{1}=0$
and we have by the inductive hypothesis in $k$ that 
\[
[\tau_{0}^{3}\prod_{i\in J}\tau_{d_{i}'}]_{g,|J|+3}=V_{g,|J|+3}\left(\prod_{i=1}^{k}C_{d_{i}}+O_{g}\left(\frac{\sum_{i=1}^{k}d_{i}}{n^{\frac{1}{4}}}\right)\right),
\]
and so the contribution from these terms is 
\begin{align*}
 & \frac{4}{V_{g,n}}\sum_{i=1}^{\left\lfloor \frac{n-4}{2}\right\rfloor }{n-3-k \choose i}[\tau_{0}^{i+1}\tau_{\ell-1}]_{0,i+2}V_{g,n-i}\left(\prod_{i=1}^{k}C_{d_{i}}+O\left(\frac{\sum_{i=1}^{k}d_{i}}{n^{\frac{1}{4}}}\right)\right)\\
 & =\frac{4}{V_{g,n}}\sum_{i=0}^{\left\lfloor \frac{n-4}{2}\right\rfloor -1}{n-3-k \choose i+1}[\tau_{0}^{i+2}\tau_{\ell-1}]_{0,i+3}V_{g,n-i-1}\left(\prod_{i=1}^{k}C_{d_{i}}+O_{g}\left(\frac{\sum_{i=1}^{k}d_{i}}{n^{\frac{1}{4}}}\right)\right).
\end{align*}
Analogous application of Lemma \ref{lem:top-terms} (but replacing
$V_{0,i+3}$ by $[\tau_{0}^{i+2}\tau_{\ell-1}]_{0,i+3}$) results
in this being equal to
\[
4\Phi_{\ell-1}^{''}(x_{0})\prod_{i=1}^{k}C_{d_{i}}+O\left(\frac{\sum_{i=1}^{k}d_{i}+\ell}{n^{\frac{1}{4}}}\right).
\]
The terms for $g_{1}>0$ can easily be shown to be $O_{g}\left(\frac{\sum_{i=1}^{k}d_{i}+\ell}{n^{\frac{1}{4}}}\right)$
identically as for (2) using Lemma \ref{lem:top-terms}. When at least
one of the indices $n-(k+1)-1,\ldots,n-3$ is not in $J$ we bound
the intersection numbers by their corresponding moduli space volumes
to obtain an upper bound on the contribution by

\begin{align*}
 & 4\sum_{a=1}^{k}\sum_{g_{1}=0}^{g}{k \choose a}\sum_{i=a}^{\left\lfloor \frac{n-4}{2}\right\rfloor }{n-3-k \choose i-a}\frac{V_{g_{1},i+2}V_{g-g_{1},n-i}}{V_{g,n}}\\
 & =4\sum_{a=0}^{k-1}\sum_{g_{1}=0}^{g}{k \choose a}\sum_{i=k-a-1}^{\left\lfloor \frac{n-4}{2}\right\rfloor -1}{n-3-k \choose i+1+k-a}\frac{V_{g-g_{1},i+3}V_{g_{1}n-i-1}}{V_{g,n}}\ll_{g}\frac{1}{n^{\frac{1}{4}}},
\end{align*}
as before. This means that 
\[
(3)=4\Phi_{\ell-1}^{''}(x_{0})\prod_{i=1}^{k}C_{d_{i}}+O_{g}\left(\frac{1}{n^{\frac{1}{4}}}\right),
\]
so that 
\begin{align}
\frac{[\tau_{0}^{2}\tau_{0}^{n-(k+1)-2}\tau_{\ell}\tau_{d_{1}}\cdots\tau_{d_{k}}]_{g,n}}{V_{g,n}} & =8\Phi_{0}^{''}(x_{0})C_{\ell-1}\prod_{i=1}^{k}C_{d_{i}}+4\Phi_{\ell-1}^{''}(x_{0})\prod_{i=1}^{k}C_{d_{i}}+O_{g}\left(\frac{\sum_{i=1}^{k}d_{i}+\ell}{n^{\frac{1}{4}}}\right)\nonumber \\
 & =C_{\ell}\prod_{i=1}^{k}C_{d_{i}}+O_{g}\left(\frac{\sum_{i=1}^{k}d_{i}+\ell}{n^{\frac{1}{4}}}\right),\label{eq:final-eq-general}
\end{align}
as required, using the recursive formula from Lemma \ref{lem:rec_rel}
for the final equality. Since the implied constants in the $O_{g}\left(\frac{1}{n^{\frac{1}{4}}}\right)$
and $O_{g}\left(\frac{\sum_{i=1}^{k}d_{i}+\ell-1}{n^{\frac{1}{4}}}\right)$
terms are independent of $d_{1},\dots,d_{k},\ell$, and the number
of such terms we combine together is independent of $d_{1},\dots,d_{k},\ell$
the implied constant in (\ref{eq:final-eq-general}) is also independent
of $d_{1},\dots,d_{k},\ell$.
\end{proof}

\begin{proof}[Proof of Theorem \ref{thm:I0-asymptotic}]
Theorem \ref{thm:I0-asymptotic} follows from Theorem \ref{thm:int-asymptotic}
and Theorem \ref{thm:Mirz-vol-exp} by recognising the Taylor expansion
of $\prod_{i=1}^{k}I_{0}(x_{i})$. We have 
\begin{align*}
\frac{V_{g,n}(\ell_{1},\dots,\ell_{k},0_{n-k})}{V_{g,n}} & =\sum_{|d|\leq3g+n-3}\prod_{i=1}^{k}\left(\frac{j_{0}\ell_{i}}{2\pi}\right)^{2d_{i}}\frac{1}{\sqrt{\pi}}\frac{\Gamma(d_{i}+\frac{1}{2})}{\Gamma(d_{i}+1)}\frac{1}{(2d_{i})!}\\
 & \thinspace\thinspace\thinspace\thinspace\thinspace\thinspace\thinspace\thinspace\thinspace\thinspace\thinspace\thinspace\thinspace\thinspace\thinspace\thinspace\thinspace\thinspace\thinspace\thinspace\thinspace\thinspace\thinspace\thinspace\thinspace\thinspace\thinspace\thinspace\thinspace\thinspace\thinspace\thinspace\thinspace\thinspace\thinspace\thinspace\thinspace\thinspace\thinspace\thinspace\thinspace\thinspace\thinspace\thinspace\thinspace+O\left(\frac{1}{n^{\frac{1}{4}}}\sum_{|d|\leq3g+n-3}|d|\prod_{i=1}^{k}\frac{\left(\frac{\ell_{i}}{2}\right)^{2d_{i}}}{(2d_{i}+1)!}\right)\\
= & \sum_{d_{1}=0}^{3g+n-3}\cdots\sum_{d_{k}=0}^{3g+n-3}\prod_{i=1}^{k}\left(\frac{j_{0}\ell_{i}}{2\pi}\right)^{2d_{i}}\frac{1}{\sqrt{\pi}}\frac{\Gamma(d_{i}+\frac{1}{2})}{\Gamma(d_{i}+1)}\frac{1}{(2d_{i})!}\\
 & \thinspace\thinspace\thinspace\thinspace\thinspace\thinspace\thinspace\thinspace\thinspace\thinspace\thinspace\thinspace-\sum_{3g+n-3<|d|\leq k(3g+n-3)}\prod_{i=1}^{k}\left(\frac{j_{0}\ell_{i}}{2\pi}\right)^{2d_{i}}\frac{1}{\sqrt{\pi}}\frac{\Gamma(d_{i}+\frac{1}{2})}{\Gamma(d_{i}+1)}\frac{1}{(2d_{i})!}\\
 & \thinspace\thinspace\thinspace\thinspace\thinspace\thinspace\thinspace\thinspace\thinspace\thinspace\thinspace\thinspace\thinspace\thinspace\thinspace\thinspace\thinspace\thinspace\thinspace\thinspace\thinspace\thinspace\thinspace\thinspace\thinspace\thinspace\thinspace\thinspace\thinspace\thinspace\thinspace\thinspace\thinspace\thinspace\thinspace\thinspace\thinspace\thinspace\thinspace\thinspace\thinspace\thinspace\thinspace\thinspace\thinspace\thinspace+O\left(\frac{1}{n^{\frac{1}{4}}}\sum_{|d|\leq3g+n-3}|d|\prod_{i=1}^{k}\frac{\left(\frac{\ell_{i}}{2}\right)^{2d_{i}}}{(2d_{i}+1)!}\right).
\end{align*}
But, 
\[
\left(\frac{j_{0}\ell_{i}}{2\pi}\right)^{2d_{i}}\frac{1}{\sqrt{\pi}}\frac{\Gamma(d_{i}+\frac{1}{2})}{\Gamma(d_{i}+1)}\frac{1}{(2d_{i})!}=\left(\frac{j_{0}\ell_{i}}{4\pi}\right)^{2d_{i}}\frac{1}{(d_{i}!)^{2}},
\]
and we recall that 
\[
I_{0}(x)=\sum_{d=0}^{\infty}\left(\frac{x}{2}\right)^{2d}\frac{1}{(d!)^{2}},
\]
so that
\[
\sum_{d_{1}=0}^{3g+n-3}\cdots\sum_{d_{k}=0}^{3g+n-3}\prod_{i=1}^{k}\left(\frac{j_{0}\ell_{i}}{2\pi}\right)^{2d_{i}}\frac{1}{\sqrt{\pi}}\frac{\Gamma(d_{i}+\frac{1}{2})}{\Gamma(d_{i}+1)}\frac{1}{(2d_{i})!}=\prod_{i=1}^{k}\left(I_{0}\left(\frac{j_{0}\ell_{i}}{2\pi}\right)-\sum_{d=3g+n-2}^{\infty}\left(\frac{j_{0}\ell_{i}}{4\pi}\right)^{2d}\frac{1}{(d!)^{2}}\right).
\]
We then notice that 
\[
\sum_{d=3g+n-2}^{\infty}\left(\frac{j_{0}\ell_{i}}{4\pi}\right)^{2d}\frac{1}{(d!)^{2}}\leq\frac{\exp\left(\left(\frac{j_{0}\ell_{i}}{4\pi}\right)^{2}\right)}{(3g+n-2)!}.
\]
 Using also that $I_{0}(x)\leq e^{x}$, we find 
\begin{align*}
 & \prod_{i=1}^{k}\left(I_{0}\left(\frac{j_{0}\ell_{i}}{2\pi}\right)-\sum_{d=3g+n-2}^{\infty}\left(\frac{j_{0}\ell_{i}}{4\pi}\right)^{2d}\frac{1}{(d!)^{2}}\right)\\
= & \prod_{i=1}^{k}I_{0}\left(\frac{j_{0}\ell_{i}}{2\pi}\right)+O\left(\sum_{\substack{I\sqcup J=\{1,\ldots,k\}\\
J\neq\emptyset
}
}\prod_{i\in I}\exp\left(\frac{j_{0}\ell_{i}}{4\pi}\right)\prod_{j\in J}\frac{\exp\left(\left(\frac{j_{0}\ell_{i}}{4\pi}\right)^{2}\right)}{(3g+n-2)!}\right)\\
= & \prod_{i=1}^{k}I_{0}\left(\frac{j_{0}\ell_{i}}{2\pi}\right)+O\left(\sum_{j=1}^{k}{k \choose j}\frac{\exp\left(\frac{j_{0}\sum_{i=1}^{k}\max(\ell_{i},\ell_{i}^{2})}{4\pi}\right)}{(3g+n-2)!^{j}}\right)\\
= & \prod_{i=1}^{k}I_{0}\left(\frac{j_{0}\ell_{i}}{2\pi}\right)+O_{k}\left(\frac{\exp\left(\frac{j_{0}\sum_{i=1}^{k}\max(\ell_{i},\ell_{i}^{2})}{4\pi}\right)}{(3g+n-2)!}\right).
\end{align*}
Using the same estimates, we see that 
\[
\sum_{3g+n-3<|d|\leq k(3g+n-3)}\prod_{i=1}^{k}\left(\frac{j_{0}\ell_{i}}{2\pi}\right)^{2d_{i}}\frac{1}{\sqrt{\pi}}\frac{\Gamma(d_{i}+\frac{1}{2})}{\Gamma(d_{i}+1)}\frac{1}{(2d_{i})!}=O_{k}\left(\frac{\exp\left(\sum_{i=1}^{k}\left(\frac{j_{0}\ell_{i}}{4\pi}\right)^{2}\right)}{(3g+n-2)!^{k}}\right).
\]
For the remaining error, we notice that 
\[
\frac{1}{n^{\frac{1}{4}}}\sum_{|d|\leq3g+n-3}|d|\prod_{i=1}^{k}\frac{\left(\frac{\ell_{i}}{2}\right)^{2d_{i}}}{(2d_{i}+1)!}\leq\frac{1}{n^{\frac{1}{4}}}\sum_{i=1}^{k}l_{i}\frac{\partial}{\partial l_{i}}\left(\prod_{i=1}^{k}\frac{\sinh\left(\frac{\ell_{i}}{2}\right)}{\frac{\ell_{i}}{2}}\right)\leqslant\frac{\prod_{i=1}^{k}\cosh\left(\frac{\ell_{i}}{2}\right)}{n^{\frac{1}{4}}},
\]
to obtain 
\[
\frac{V_{g,n}(\ell_{1},\dots,\ell_{k},0_{n-k})}{V_{g,n}}=\prod_{i=1}^{k}I_{0}\left(\frac{j_{0}\ell_{i}}{2\pi}\right)+O\left(\frac{\prod_{i=1}^{k}\cosh\left(\frac{\ell_{i}}{2}\right)}{n^{\frac{1}{4}}}\right).
\]

\end{proof}

\section{Proof of Theorem \ref{thm:eigenvalues} \protect\label{sec:Short-geodesics-and}}

The purpose of this section is the prove Theorem \ref{thm:eigenvalues}. 
\begin{proof}[Proof of Theorem \ref{thm:eigenvalues}.]
 Let $N_{2}\left(X,L\right)$ denote the number of unoriented, primitive
closed geodesics on $X$ with length $\leqslant L$ that separate
off exactly two cusps and no genus. We first show that for any $L<2\text{arcsinh}1$,
there is a constant $C(L)$ such that
\begin{equation}
\mathbb{P}_{g,n}\left[\frac{N_{2}\left(X,L\right)}{n}<C(L)\right]\to0\label{eq:prob-eq}
\end{equation}
as $n\to\infty$.

By our assumption, any geodesic of length $<L$ is simple and we can
write
\[
N_{2}\left(X,L\right)=\sum_{[\gamma]}\sum_{\alpha\in\text{MCG\ensuremath{\cdot}}[\gamma]}\mathds{1}_{\leqslant L}(\ell_{\alpha}(X)),
\]
where the first summation is over all mapping class group orbits of
(homotopy classes of) simple closed curves which separate $\Sigma_{g,n}$
into $\Sigma_{0,3}$ and $\Sigma_{g,n-1}$. Since by our definition,
the mapping class group respects the labelling of punctures, there
are ${n \choose 2}$ orbits. By Mirzakhani's integration formula,
\begin{align*}
\mathbb{E}\left[\frac{N_{2}\left(X,L\right)}{n}\right] & =\frac{1}{nV_{g,n}}{n \choose 2}\int_{0}^{L}xV_{g,n-1}(0_{n-n_{1}},x)\mathrm{d}x\\
 & =\frac{1}{n}{n \choose 2}\frac{V_{g,n-1}}{V_{g,n}}\int_{0}^{L}x\left(I_{0}\left(\frac{j_{0}x}{2\pi}\right)+O_{g}\left(\frac{1}{n^{\frac{1}{4}}}\cosh\left(\frac{x}{2}\right)\right)\right)\mathrm{d}x\\
 & =\frac{1}{n}{n \choose 2}\frac{V_{g,n-1}}{V_{g,n}}\left(\frac{2\pi L}{j_{0}}I_{1}\left(\frac{j_{0}L}{2\pi}\right)+O_{g,L}\left(\frac{1}{n^{\frac{1}{4}}}\right)\right).
\end{align*}
By Theorem \ref{thm:Manin-Zograf},

\begin{align*}
{n \choose 2}\frac{V_{g,n-1}}{nV_{g,n}} & =\frac{n-1}{n}\frac{1}{2}\left(\frac{n}{n+1}\right)^{\frac{5g-7}{2}}\left(\frac{x_{0}}{2\pi^{2}}\right)\left(1+O_{g}\left(\frac{1}{n}\right)\right)\\
 & =\frac{x_{0}}{4\pi^{2}}\left(1+O_{g}\left(\frac{1}{n^{\frac{1}{2}}}\right)\right).
\end{align*}
Then
\[
\mathbb{E}\left[\frac{N_{2}\left(X,L\right)}{n}\right]=\frac{LJ_{1}\left(j_{0}\right)I_{1}\left(\frac{j_{0}L}{2\pi}\right)}{4\pi}\left(1+O\left(\frac{1}{n^{\frac{1}{4}}}\right)\right).
\]
Now we compute the variance. 
\begin{align*}
\mathbb{E}\left[\left(\frac{N_{2}\left(X,L\right)}{n}\right)^{2}\right] & =\frac{1}{n^{2}}\mathbb{E}\left[\sum_{\substack{\gamma\in\mathcal{P}(X)\\
\gamma\ \text{separates off exactly \ensuremath{2} cusps}
}
}\ind_{\leqslant L}\left(\ell_{\gamma}\left(X\right)\right)\right]\\
 & +\frac{1}{n^{2}}\mathbb{E}\left[\sum_{\substack{(\gamma_{1},\gamma_{2})\in\mathcal{P}\left(X\right)\times\mathcal{P}\left(X\right)\\
\gamma_{1}\neq\gamma_{2}\\
\gamma_{1},\gamma_{2}\ \text{separate off exactly \ensuremath{2} cusps}
}
}\ind_{\leqslant L}\left(\ell_{\gamma_{1}}\left(X\right),\ell_{\gamma_{2}}\left(X\right)\right)\right].
\end{align*}
The first term on the right hand side is just equal to 
\[
\frac{\mathbb{E}\left[N_{2}\left(X,L\right)\right]}{n^{2}}=O_{g,L}\left(\frac{1}{n}\right).
\]
Since $L<2\text{arcsinh}1$, any pair of curves $\gamma_{1}\neq\gamma_{2}$
with length $\leqslant L$ are disjoint by the Collar Lemma (c.f.
\cite[Theorem 4.4.6]{Bu2010}). We calculate the second term as
\begin{align*}
 & \frac{1}{n^{2}}\mathbb{E}\left[\sum_{\substack{(\gamma_{1},\gamma_{2})\in\mathcal{P}\left(X\right)\times\mathcal{P}\left(X\right)\\
\gamma_{1}\neq\gamma_{2}\\
\gamma_{1},\gamma_{2}\ \text{separate off exactly \ensuremath{2} cusps}
}
}\ind_{\frac{1}{L}}\left(\ell_{\gamma_{1}}\left(X\right),\ell_{\gamma_{2}}\left(X\right)\right)\right]\\
 & \,\,\,\,\,\,\,\,\,\,\,\,\,\,\,\,\,\,\,\,\,\,\,\,\,\,\,\,\,\,\,\,\,\,\,\,\,\,={n \choose 2,2}\frac{1}{n^{2}V_{g,n}}\int_{0}^{L}\int_{0}^{L}x_{1}x_{2}V_{g,n-2}(x_{1},x_{2})dx_{1}dx_{2}\\
 & \,\,\,\,\,\,\,\,\,\,\,\,\,\,\,\,\,\,\,\,\,\,\,\,\,\,\,\,\,\,\,\,\,\,\,\,\,\,={n \choose 2,2}\frac{V_{g,n-2}}{n^{2}V_{g,n}}\left(\left(\frac{2\pi L}{j_{0}}I_{1}\left(\frac{j_{0}L}{2\pi}\right)\right)^{2}+O_{g,L}\left(\frac{1}{n^{\frac{1}{4}}}\right)\right).
\end{align*}
Then by Theorem \ref{thm:Manin-Zograf}, 
\begin{align}
 & {n \choose 2,2}\frac{V_{g,n-2}}{n^{2}V_{g,n}}\nonumber \\
 & =\frac{1}{4}\frac{(n-2)(n-3)}{n^{2}}\left(\frac{n-1}{n+1}\right)^{\frac{5g-7}{2}}\left(\frac{x_{0}}{2\pi^{2}}\right)^{2}\left(1+O_{g}\left(\frac{1}{n}\right)\right)\nonumber \\
 & =\frac{1}{4}\left(\frac{x_{0}}{2\pi^{2}}\right)^{2}\left(1+O_{g}\left(\frac{1}{n^{\frac{1}{2}}}\right)\right).\label{eq:n-n1-n2bd}
\end{align}
Then 
\[
\mathrm{Var}\left(\frac{N_{2}\left(X,L\right)}{n}\right)=\mathbb{E}\left[\left(\frac{N_{2}\left(X,L\right)}{n}\right)^{2}\right]-\left(\mathbb{E}\left[\frac{N_{2}\left(X,L\right)}{n}\right]\right)^{2}\ll_{g,L}\frac{1}{n^{\frac{1}{4}}}.
\]
Taking $C(L)$ to be any constant $<\frac{LJ_{1}\left(j_{0}\right)I_{1}\left(\frac{j_{0}L}{2\pi}\right)}{4\pi}$,
the claim follows by applying Chebyshev's inequality.

By applying a min-max argument, it is shown in \cite[Section 5]{Hi.Th.22},
if $N_{2}\left(X,\frac{\ep}{6}\right)>k$ then $\lambda_{k}\leqslant\ep$.
Taking $L=\frac{\ep}{6}$, it follows from (\ref{eq:prob-eq}) that
$\lambda_{C\left(\frac{\ep}{6}\right)n}<\ep$ with probability tending
to $1$ as $n\to\infty$.
\end{proof}

\section{Relative frequencies of closed curves}

The purpose of this section is to prove the following.
\begin{thm}
\label{thm:simple-nonsimple-2}Let $L=L(n)>0$ be any function with
$L\to\infty$ as $n\to\infty$ and $L=O\left(\log n\right)$. Then
there exists a function $\varepsilon(n)$ with $\varepsilon(n)\to0$
as $n\to\infty$ such that a Weil-Petersson random surface $X\in\mathcal{M}_{g,n}$
satisfies
\[
N^{\mathrm{s}}(X,L)\leqslant\varepsilon(n)N^{\mathrm{ns}}(X,L)
\]
with probability tending to $1$ as $n\to\infty$.
\end{thm}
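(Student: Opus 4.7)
The plan is to compare $N^s(X,L)$ to the total count $N(X,L) = N^s(X,L) + N^{\mathrm{ns}}(X,L)$: bound the first from above in expectation via Mirzakhani's integration formula, bound the second from below via the prime geodesic theorem, and use $N^{\mathrm{ns}} = N - N^s$ to force the ratio $N^s/N^{\mathrm{ns}}$ to zero in probability, as sketched in the remark following Theorem \ref{thm:simple/non-simple-intro}.

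For the upper bound, following the proof of Theorem \ref{thm:number-curves} I would apply Theorem \ref{thm:MIF} and split the sum over mapping class group orbits of simple closed curves on $\Sigma_{g,n}$ by the topological type of the complement. Substituting Theorem \ref{thm:I0-asymptotic} into each integrand (its error $n^{-1/4}\cosh(L/2)$ being $o(1)$ under $L = O(\log n)$ with a sufficiently small implicit constant) and summing via Lemma \ref{lem:Manin-Zograf-series} and Theorem \ref{thm:Manin-Zograf}, the dominant contribution comes from separating curves splitting off a $g_1 = 0$ component with small $n_1$, yielding
\[
\mathbb{E}_{g,n}[N^s(X,L)] \leq C(g)\, n \int_0^L x\, I_0\!\left(\tfrac{j_0 x}{2\pi}\right)\mathrm{d}x \leq C'(g)\, n\, L\, e^{\frac{j_0}{2\pi} L},
\]
where the exponent $j_0/(2\pi) < 1/2$, coming from $I_0(y) \sim e^y/\sqrt{2\pi y}$, is strictly smaller than the $1$ governing the total geodesic count. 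For the lower bound, I would invoke an effective prime geodesic theorem, $N(X,L) = \mathrm{li}(e^L) + \sum_{s_j \in (1/2,1]}\mathrm{li}(e^{s_j L}) + O(e^{3L/4})$, and observe that the number of exceptional eigenvalues is $\leq 2g+n-2 = O_g(n)$ by \cite{Ba.Ma.Mo17}. With a remainder made uniform on $\mathcal{M}_{g,n}$ up to a polynomial-in-$n$ prefactor, this yields $N(X,L) \geq e^L/(2L)$ pointwise in the subregime where $e^L/L$ dominates $n\, e^{3L/4}$. Markov's inequality on the upper bound then gives, with probability tending to $1$,
\[
\frac{N^s(X,L)}{N^{\mathrm{ns}}(X,L)} \leq \frac{\omega(n)\, C'(g)\, n L\, e^{j_0 L/(2\pi)}}{\tfrac{1}{2} e^L/L \;-\; \omega(n)\, C'(g)\, n L\, e^{j_0 L/(2\pi)}} \longrightarrow 0,
\]
for any sufficiently slowly growing $\omega(n)$, giving the claim with $\varepsilon(n)$ equal to the right-hand side.

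The principal obstacle is the lower bound on $N^{\mathrm{ns}}(X,L)$: since Mirzakhani's integration formula only counts simple multicurves, non-simple geodesics are accessible only indirectly through spectral or trace-formula methods, whose uniformity across $\mathcal{M}_{g,n}$ must contend with possibly many exceptional eigenvalues. The constraint $L = O(\log n)$ plays a dual role here, bounding both the $\cosh(L/2)$ error in Theorem \ref{thm:I0-asymptotic} and the $O(n)$ spectral remainder in the prime geodesic theorem, which together delimit the window of $L$ in which this argument succeeds; this is essentially the obstruction the authors identify in the remark after the theorem.
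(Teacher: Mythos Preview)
Your upper bound on $N^s(X,L)$ via Mirzakhani's integration formula and Theorem~\ref{thm:I0-asymptotic} matches the paper's Lemma~\ref{lem:simple-curve-upper-bound} exactly. The gap is in the lower bound on $N(X,L)$.

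You propose to apply a prime geodesic theorem to the whole surface $X$, extracting only the main term $\mathrm{li}(e^L)\sim e^L/L$ as a lower bound. Even granting your assumption that the remainder can be made uniform on $\mathcal{M}_{g,n}$ up to a polynomial in $n$, the resulting bound $N(X,L)\geqslant e^L/(2L)$ is too weak: the theorem must hold for \emph{every} $L\to\infty$ with $L=O(\log n)$, including e.g.\ $L=\log\log n$, and in that regime your upper bound $N^s\lesssim n e^{j_0 L/(2\pi)}$ dwarfs $e^L/L$. Indeed, your displayed ratio has positive denominator only when $e^{(1-j_0/(2\pi))L}\gg nL^2$, i.e.\ roughly $L\gtrsim 1.6\log n$, which is not implied by $L=O(\log n)$. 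You flag this as a ``subregime'' but do not address the complementary range, so the argument as written does not prove the stated theorem.

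The paper avoids this by obtaining a lower bound of the form $N(X,L)\geqslant C n\,e^{(1-\kappa)L}$, with the crucial extra factor of $n$. It does so by localising: Theorem~\ref{thm:number-curves} (via Remark~\ref{rem:pants-sep-curves}) guarantees that a.a.s.\ $X$ contains $\gtrsim n$ disjoint pairs of pants with two cusps and one short geodesic boundary, and Proposition~\ref{thm:Pants-PGT} applies Naud's prime geodesic theorem \emph{to each pants} to produce $\gtrsim e^{(1-\kappa)L}$ closed geodesics there. Summing over the $\sim n$ pants gives the factor of $n$, and the ratio $N^s/N\lesssim e^{(c_1-1+\kappa)L}\to 0$ then holds for any $L\to\infty$, independent of its growth rate relative to $n$. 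This also sidesteps your uniformity concern: the PGT is invoked only on a one-parameter family of pants $Y_\ell$ with $\ell\to 0$, where the error is controlled uniformly by Naud's and Borthwick's results.

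Your route could be salvaged by feeding Corollary~\ref{cor:small-eigenvalues} back into the exceptional spectrum sum $\sum_j \mathrm{li}(e^{s_jL})$, which would produce $\gtrsim n\,e^{(1-\kappa)L}$ from the $\gtrsim cn$ eigenvalues near zero; but you would still need to control the PGT remainder uniformly on $\mathcal{M}_{g,n}$, which is genuinely delicate for surfaces with many cusps and arbitrarily short geodesics, and which the paper's local argument neatly bypasses.
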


\subsection*{Outline of the proof}

Let $L\to\infty$ as $n\to\infty$ with $L=O\left(\log n\right)$.
First we show, using Theorem \ref{thm:I0-asymptotic}, that there
is a constant $c_{1}<1$ such that with high probability, the number
of simple closed geodesics of length less than $L$ is at most $ne^{c_{1}L}$.
The fact that $c_{1}<1$ is crucial here. We then want a lower bound
for the number of closed geodesics of length up to $L$ which holds
with high probability and grows faster than $ne^{c_{1}L}$ as $n\to\infty$.

To achieve this, we note that by the estimates of Section \ref{sec:Short-geodesics-and},
for any $\ep>0$, with probability tending to $1$ as $n\to\infty$,
a random surface in $\mathcal{M}_{g,n}$ has at least $c\left(\ep\right)n$
closed geodesics of length at most $\ep$ which separate off a pair
of pants with two cusps. This tells us that with high probability
on a random surface there are at least $c\left(\ep\right)n$ disjoint
subsurfaces which are pairs of pants with two cusps and geodesic boundary
with length less than $\ep$. We show in Subsection \ref{subsec:pants-counting}
that one can pick $\ep$ so that there are at least $e^{c_{2}L}$
closed geodesics of length less than $L$ in each subsurface where
$c_{2}$ satisfies $c_{1}<c_{2}<1$. It follows that 
\[
\frac{N^{s}\left(X,L\right)}{N^{\text{ns}}\left(X,L\right)}\leqslant\text{const}\cdot e^{\left(c_{1}-c_{2}\right)L}
\]
with probability tending to $1$ as $n\to\infty$.

\subsection{Growth of the number of simple closed geodesics}
\begin{lem}
\label{lem:simple-curve-upper-bound}Let $L=L\left(n\right)$ be any
function with $L\to\infty$ as $n\to\infty$ and $L=O\left(\log n\right)$.
Then there exists a positive constant $c_{1}$ with $c_{1}<1$ such
that

\[
\mathbb{P}_{g,n}\left[N^{\text{s}}\left(X,L\right)\geqslant ne^{c_{1}L}\right]\to0,
\]
as $n\to\infty$.
\end{lem}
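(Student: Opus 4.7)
The strategy is Markov's inequality: we reduce to showing $\mathbb{E}_{g,n}[N^{\mathrm{s}}(X,L)] = O_g(n e^{c' L})$ for some $c' < 1$, after which any $c_1 \in (c', 1)$ yields the result. Expanding $N^{\mathrm{s}}(X, L) = \sum_{[\Gamma]} \sum_{\alpha \in [\Gamma]} \mathbf{1}_{\ell_\alpha(X) \leq L}$ over mapping class group orbits of non-peripheral simple closed curves and applying Mirzakhani's integration formula (Theorem \ref{thm:MIF}), the expectation becomes a finite sum of integrals of the form $\int_0^L x\, V_n(\Gamma, x)/V_{g,n}\, \mathrm{d}x$. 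The orbits to consider are a single non-separating class (present when $g \geq 1$) and, for every decomposition $(g_1, I_1), (g_2, I_2)$ with $g_1 + g_2 = g$, $I_1 \sqcup I_2 = \{1,\ldots,n\}$, and $2g_i + |I_i| - 1 > 0$, a single separating class.

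The critical input is Theorem \ref{thm:I0-asymptotic}:
\[
\frac{V_{g', n'+1}(0_{n'}, x)}{V_{g', n'+1}} = I_0\!\left(\frac{j_0 x}{2\pi}\right) + O_{g'}\!\left(\frac{\cosh(x/2)}{(n')^{1/4}}\right).
\]
The structural fact we exploit is $j_0/(2\pi) < 1/2$, so $I_0(j_0 x/(2\pi)) \sim e^{j_0 x/(2\pi)}/\sqrt{x}$ grows strictly slower than the $\sinh(x/2)$ upper bound of Lemma \ref{lem:sinh-upper-bound}. The dominant contribution is the $g_1 = 0$, $g_2 = g$ separating orbits (and symmetrically): combining Theorem \ref{thm:Manin-Zograf} with Lemma \ref{lem:Manin-Zograf-series} gives $\sum_{n_1} \binom{n}{n_1} V_{0, n_1+1} V_{g, n-n_1+1}/V_{g,n} = O_g(n)$, and applying the $I_0$ bound inside the integral produces a leading order $O_g\bigl(n \int_0^L x\, I_0(j_0 x/(2\pi))^2\, \mathrm{d}x\bigr) = O_g(n\, e^{j_0 L/\pi})$. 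Separating orbits with both $g_1, g_2 \geq 1$ acquire an extra factor $(n+1)^{-5 \min(g_1, g_2)/2}$ from Theorem \ref{thm:Manin-Zograf} and are negligible, while the non-separating contribution is $O_g(V_{g-1,n+2}/V_{g,n}) \cdot O(e^L) = O_g(n^{-1/2} e^L)$ using Lemma \ref{lem:sinh-upper-bound}.

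Collecting the squared and cross error terms produced by Theorem \ref{thm:I0-asymptotic}, and splitting the $n_1$ summation at $\sqrt{n}$ in the spirit of Section \ref{sec:Preliminary-Volume-Computations}, one arrives at
\[
\mathbb{E}_{g,n}[N^{\mathrm{s}}(X,L)] \leq C_g\, n \left( e^{j_0 L/\pi} + n^{-1/4} e^{(1/2 + j_0/(2\pi)) L} + n^{-1/2} e^L \right).
\]
Writing $L \leq C_0 \log n$, each of the three bracketed terms, once divided by $e^{c_1 L}$, decays to zero provided $c_1 < 1$ is chosen sufficiently close to $1$ relative to $C_0$; such a choice exists since $C_0$ is finite. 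Markov's inequality then concludes the proof.

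The principal obstacle is that the error term $O(n^{-1/4} \cosh(x/2))$ in Theorem \ref{thm:I0-asymptotic} overtakes the main term $I_0(j_0 x/(2\pi))$ once $x$ exceeds a small multiple of $\log n$, which is precisely why $L = O(\log n)$ is required and why $c_1$ cannot be taken as small as the natural value $j_0/\pi$. Any quantitative improvement to Theorem \ref{thm:I0-asymptotic}, as flagged in the remark following Theorem \ref{thm:simple/non-simple-intro}, would translate into a corresponding relaxation of the hypothesis on $L$.
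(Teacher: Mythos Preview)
Your overall strategy---Markov's inequality applied to $\mathbb{E}_{g,n}[N^{\mathrm{s}}(X,L)]$, which is computed via Mirzakhani's integration formula and split into non-separating and separating contributions---is exactly the paper's approach. The decomposition into orbit types and the observation that $j_0/(2\pi)<1/2$ is the key inequality are both correct.

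There is, however, a genuine gap in your derivation of the leading term for the separating contribution. You write that ``applying the $I_0$ bound inside the integral produces a leading order $O_g\bigl(n\int_0^L x\,I_0(j_0x/(2\pi))^2\,\mathrm{d}x\bigr)$'', i.e.\ you replace \emph{both} factors $V_{0,n_1+1}(0_{n_1},x)/V_{0,n_1+1}$ and $V_{g,n-n_1+1}(0_{n-n_1},x)/V_{g,n-n_1+1}$ by $I_0(j_0x/(2\pi))$ with error $O(n^{-1/4}\cosh(x/2))$. But Theorem~\ref{thm:I0-asymptotic} gives an error of order $m^{-1/4}\cosh(x/2)$ where $m$ is the number of cusps of \emph{that} piece. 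For the small piece this is $(n_1+1)^{-1/4}$, which does not decay as $n\to\infty$ when $n_1$ is bounded---and the bulk of the sum $\sum_{n_1}\binom{n}{n_1}V_{0,n_1+1}V_{g,n-n_1+1}/V_{g,n}$ is carried by bounded $n_1$. Consequently your ``cross error term'' $n^{-1/4}e^{(1/2+j_0/(2\pi))L}$ is missing the crucial point: the true contribution from the small piece is $O(\cosh(x/2))$ without any $n$-decay, so the product with the large piece's $I_0$ yields a main term of order $e^{(1/2+j_0/(2\pi))L}$ \emph{without} the $n^{-1/4}$ factor.

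The paper avoids this by never applying Theorem~\ref{thm:I0-asymptotic} to the small piece: it uses the elementary $\sinh$ upper bound (Lemma~\ref{lem:sinh-upper-bound}) there and the $I_0$ asymptotic only on the large piece, arriving at
\[
\mathbb{E}_{g,n}\!\left[\frac{N^{\mathrm{s}}(X,L)}{n}\right]\ \ll_g\ \int_0^L I_0\!\left(\frac{j_0x}{2\pi}\right)\sinh\!\left(\frac{x}{2}\right)\mathrm{d}x\ +\ \frac{e^L}{n^{1/4}}\ \ll_g\ e^{c_0L}+\frac{e^L}{n^{1/4}},
\]
with $c_0=\tfrac{1}{2}+\tfrac{j_0}{2\pi}<1$ rather than your $j_0/\pi$. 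Once you make this correction the Markov step goes through exactly as you describe: pick $c_1\in(c_0,1)$ with $1-c_1<\tfrac{1}{4C_0}$ (where $L\le C_0\log n$), and both $e^{(c_0-c_1)L}$ and $n^{-1/4}e^{(1-c_1)L}$ tend to zero.
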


\begin{proof}
Let $\Sigma_{g,n}$ denote a genus-$g$ topological surface with $n$
labelled punctures. We have

\[
N(X,L)=\sum_{[\gamma]}\sum_{\alpha\in[\gamma]}\mathds{1}_{\leqslant L}(\ell_{\alpha}(X)),
\]
where the exterior summation is over all mapping class group orbits
of homotopy classes of simple closed curves on $\Sigma_{g,n}$. On
$\Sigma_{g,n}$ there are the following types of mapping class group
orbits:
\begin{enumerate}
\item When $g\geq1$, there is a single mapping class group orbit of non-separating
curves that when cut along, reduces the genus by one and adds two
boundaries.
\item For each configuration $\left\{ \left(g_{1},n_{1},\left\{ c_{1}^{1},\ldots,c_{n_{1}}^{1}\right\} \right),\left(g_{2},n_{2},\left\{ c_{1}^{2},\ldots,c_{n_{2}}^{2}\right\} \right)\right\} $
where $g_{1}+g_{2}=g$, $n_{1}+n_{2}=n$, $2g_{i}+n_{i}-1>0$ and
$\left\{ c_{1}^{1},\ldots,c_{n_{1}}^{1},c_{1}^{2},\ldots,c_{n_{2}}^{2}\right\} =\left\{ 1,\ldots,n\right\} $,
there is a single mapping class group orbit for the curves that separate
the surface into a component with genus $g_{1}$, $n_{1}$ cusps labelled
with $\{c_{1}^{1},\ldots,c_{n_{1}}^{1}\}$ and one boundary and a
component with genus $g_{2}$, $n_{2}$ cusps labelled with $\{c_{1}^{2},\ldots,c_{n_{2}}^{2}\}$
and one boundary.
\end{enumerate}
We will write $0_{m}:=\underbrace{(0,\ldots,0)}_{m}$. Using Theorem
\ref{thm:MIF}, we see that
\begin{align*}
\mathbb{E}_{g,n}\left(\frac{N\left(X,L\right)}{n}\right) & =\underbrace{\frac{1}{nV_{g,n}}\int_{0}^{L}xV_{g-1,n+2}(0_{n},x,x)\mathrm{d}x}_{(1)}\\
 & +\underbrace{\frac{1}{nV_{g,n}}\sum_{g_{1}=1}^{g-1}\sum_{n_{1}=0}^{\lfloor\frac{n}{2}\rfloor}{n \choose n_{1}}\int_{0}^{L}xV_{g_{1},n_{1}+1}(0_{n_{1}},x)V_{g-g_{1},n-n_{1}+1}(0_{n-n_{1}},x)\mathrm{d}x}_{(2)}\\
 & +\underbrace{\frac{1}{nV_{g,n}}\sum_{n_{1}=2}^{n}{n \choose n_{1}}\int_{0}^{L}xV_{0,n_{1}+1}(0_{n_{1}},x)V_{g,n-n_{1}+1}(0_{n-n_{1}},x)\mathrm{d}x}_{(3)}.
\end{align*}
For term (1), we use the trivial volume bounds Lemma \ref{lem:sinh-upper-bound}
and (\ref{eq:lower-genus}) to obtain
\[
\frac{1}{nV_{g,n}}\int_{0}^{L}xV_{g-1,n+2}(0_{n-2},x,x)\mathrm{d}x=\frac{e^{L}}{n^{\frac{1}{4}}}\frac{V_{g-1,n+2}}{nV_{g,n}}=\frac{e^{L}}{n^{\frac{3}{2}}}.
\]
For term (2), we use the trivial bound 
\[
V_{g_{1},n_{1}+1}(0_{n_{1}},x)V_{g-g_{1},n-n_{1}+1}(0_{n-n_{1}},x)\leq\frac{\sinh^{2}\left(\frac{x}{2}\right)}{\left(\frac{x}{2}\right)^{2}}V_{g_{1},n_{1}+1}V_{g-g_{1},n-n_{1}+1},
\]
 and employ Theorem \ref{thm:Manin-Zograf} to write

\[
{n \choose n_{1}}\frac{V_{g-g_{1},n-n_{1}+1}}{nV_{g,n}}\ll_{g}\frac{1}{n_{1}!}\frac{1}{(n+1)^{\frac{5g_{1}}{2}}}\frac{x_{0}^{n_{1}-1}}{(2\pi^{2})^{3g_{1}+n_{1}-1}}.
\]
Then, 
\begin{align*}
 & \frac{1}{nV_{g,n}}\sum_{g_{1}=1}^{g-1}\sum_{n_{1}=0}^{\lfloor\frac{n}{2}\rfloor}{n \choose n_{1}}\int_{0}^{L}xV_{g_{1},n_{1}+1}(0_{n_{1}},x)V_{g-g_{1},n-n_{1}+1}(0_{n-n_{1}},x)\mathrm{d}x\\
 & \ll_{g}e^{L}\sum_{g_{1}=1}^{g-1}\sum_{n_{1}=0}^{\lfloor\frac{n}{2}\rfloor}\frac{V_{g_{1},n_{1}+1}}{n_{1}!}\frac{1}{(n+1)^{\frac{5g_{1}}{2}}}\frac{x_{0}^{n_{1}-1}}{(2\pi^{2})^{3g_{1}+n_{1}-1}}\ll_{g}\frac{e^{L}}{n^{\frac{1}{4}}},
\end{align*}
by Remark \ref{rem:small-sum}. The leading order contribution comes
from term (3). To see this, write
\begin{align*}
 & \Big|\sum_{n_{1}=2}^{n}{n \choose n_{1}}\int_{0}^{L}x\frac{V_{0,n_{1}+1}(0_{n_{1}},x)V_{g,n-n_{1}+1}(0_{n-n_{1}},x)}{nV_{g,n}}\mathrm{d}x\\
 & \thinspace\thinspace\thinspace\thinspace\thinspace\thinspace\thinspace\thinspace\thinspace\thinspace\thinspace\thinspace\thinspace\thinspace\thinspace\thinspace\thinspace\thinspace\thinspace-\sum_{n_{1}=2}^{\infty}\frac{1}{n_{1}!}\left(\frac{x_{0}}{2\pi^{2}}\right)^{n_{1}-1}\int_{0}^{L}xV_{0,n_{1}+1}(0_{n_{1}},x)I_{0}\left(\frac{j_{0}x}{2\pi}\right)\mathrm{d}x\Big|\\
\leqslant & \underbrace{\Big|\sum_{n_{1}=2}^{\lfloor\sqrt{n}\rfloor}{n \choose n_{1}}\int_{0}^{L}x\frac{V_{0,n_{1}+1}(0_{n_{1}},x)V_{g,n-n_{1}+1}(0_{n-n_{1}},x)}{nV_{g,n}}\mathrm{d}x}\\
 & \underbrace{\,\,\,\,\,\,\,\,\,\,\,\,\thinspace\thinspace\thinspace\thinspace\,-\sum_{n_{1}=2}^{\lfloor\sqrt{n}\rfloor}\frac{1}{n_{1}!}\left(\frac{x_{0}}{2\pi^{2}}\right)^{n_{1}-1}\int_{0}^{L}xV_{0,n_{1}+1}(0_{n_{1}},x)I_{0}\left(\frac{j_{0}x}{2\pi}\right)\mathrm{d}x\Big|}_{(a)}\\
+ & \underbrace{\sum_{n_{1}=\lfloor\sqrt{n}\rfloor}^{n}{n \choose n_{1}}\int_{0}^{L}x\frac{V_{0,n_{1}+1}(0_{n_{1}},x)V_{g,n-n_{1}+1}(0_{n-n_{1}},x)}{nV_{g,n}}\mathrm{d}x}_{(b)}\\
 & \thinspace\thinspace\thinspace\thinspace\thinspace\thinspace+\underbrace{\sum_{n_{1}=\lfloor\sqrt{n}\rfloor}^{\infty}\frac{1}{n_{1}!}\left(\frac{x_{0}}{2\pi^{2}}\right)^{n_{1}-1}\int_{0}^{L}xV_{0,n_{1}+1}(0_{n_{1}},x)I_{0}\left(\frac{j_{0}x}{2\pi}\right)\mathrm{d}x}_{(c)}.
\end{align*}
We start with bounding (a) by using that fact that since $n_{1}\leq\sqrt{n}$,
we have $n-n_{1}+1\geq\frac{n}{2}$ so that by Theorem \ref{thm:I0-asymptotic},
\[
V_{g,n-n_{1}+1}(0_{n-n_{1}},x)=V_{g,n-n_{1}+1}\left(I_{0}\left(\frac{j_{0}x}{2\pi}\right)+O_{g}\left(\frac{1}{n^{\frac{1}{4}}}\cosh\left(\frac{x}{2}\right)\right)\right).
\]
Then by Theorem \ref{thm:Manin-Zograf},

\begin{align*}
{n \choose n_{1}}\frac{V_{g,n-n_{1}+1}}{nV_{g,n}} & =\frac{n-n_{1}+1}{n}\frac{1}{n_{1}!}\left(\frac{n-n_{1}+2}{n+1}\right)^{\frac{5g-7}{2}}\left(\frac{x_{0}}{2\pi^{2}}\right)^{n_{1}-1}\left(1+O_{g}\left(\frac{1}{n}\right)\right)\\
 & =\frac{1}{n_{1}!}\left(\frac{x_{0}}{2\pi^{2}}\right)^{n_{1}-1}\left(1+O_{g}\left(\frac{1}{n^{\frac{1}{2}}}\right)\right).
\end{align*}
Thus using Lemma \ref{lem:sinh-upper-bound},
\begin{align*}
(a)\ll_{g} & \frac{1}{n^{\frac{1}{2}}}\sum_{n_{1}=2}^{\lfloor\sqrt{n}\rfloor}\frac{1}{n_{1}!}\left(\frac{x_{0}}{2\pi^{2}}\right)^{n_{1}-1}\int_{0}^{L}xV_{0,n_{1}+1}(0_{n_{1}},x)I_{0}\left(\frac{j_{0}x}{2\pi}\right)\mathrm{d}x\\
 & +\frac{1}{n^{\frac{1}{4}}}\sum_{n_{1}=2}^{\lfloor\sqrt{n}\rfloor}\frac{1}{n_{1}!}\left(\frac{x_{0}}{2\pi^{2}}\right)^{n_{1}-1}\int_{0}^{L}xV_{0,n_{1}+1}(0_{n_{1}},x)\cosh\left(\frac{x}{2}\right)\mathrm{d}x\\
 & \ll_{g}e^{L}\frac{1}{n^{\frac{1}{4}}}\sum_{n_{1}=2}^{\lfloor\sqrt{n}\rfloor}\frac{V_{0,n_{1}+1}}{n_{1}!}\left(\frac{x_{0}}{2\pi^{2}}\right)^{n_{1}-1}\ll_{g}\frac{e^{L}}{n^{\frac{1}{4}}},
\end{align*}
with the last line following from Lemma \ref{lem:Manin-Zograf-series}.
For (b), we bound
\[
V_{0,n_{1}+1}(0_{n_{1}},x)V_{g,n-n_{1}+1}(0_{n-n_{1}},x)\leq V_{0,n_{1}+1}V_{g,n-n_{1}+1}\frac{\sinh^{2}\left(\frac{x}{2}\right)}{\left(\frac{x}{2}\right)^{2}},
\]
and then split the summation for $\lfloor\sqrt{n}\rfloor\leq n_{1}\leq\lfloor\frac{n}{2}\rfloor$
and $\lfloor\frac{n}{2}\rfloor\leq n_{1}\leq n$. In the first case,
we use Theorem \ref{thm:Manin-Zograf} to bound 
\[
{n \choose n_{1}}\frac{V_{g,n-n_{1}+1}}{nV_{g,n}}\ll_{g}\frac{1}{n_{1}!}\left(\frac{x_{0}}{2\pi^{2}}\right)^{n_{1}-1},
\]
and in the second case,
\[
{n \choose n_{1}}\frac{V_{0,n_{1}+1}}{nV_{g,n}}\ll_{g}\frac{1}{(n-n_{1})!}\frac{x_{0}^{n-n_{1}-1}}{\left(2\pi^{2}\right)^{3g+n-n_{1}-1}}\frac{1}{(n+1)^{\frac{5g}{2}}}.
\]
Thus by Remark \ref{rem:small-sum} and using Lemma \ref{lem:sinh-upper-bound}
for the other volumes, 
\begin{align*}
(b) & \ll_{g}e^{L}\sum_{n_{1}=\lfloor\sqrt{n}\rfloor}^{\lfloor\frac{n}{2}\rfloor}\frac{V_{0,n_{1}+1}}{n_{1}!}\left(\frac{x_{0}}{2\pi^{2}}\right)^{n_{1}-1}+e^{L}\sum_{n_{1}=\lfloor\frac{n}{2}\rfloor}^{n}\frac{V_{g,n-n_{1}+1}}{(n-n_{1})!}\left(\frac{x_{0}}{2\pi^{2}}\right)^{n-n_{1}-1}\frac{1}{(n+1)^{\frac{5g}{2}}}\\
 & \ll_{g}e^{L}\sum_{n_{1}=\lfloor\sqrt{n}\rfloor}^{\lfloor\frac{n}{2}\rfloor}\frac{V_{0,n_{1}+1}}{n_{1}!}\left(\frac{x_{0}}{2\pi^{2}}\right)^{n_{1}-1}+e^{L}\sum_{n_{1}=0}^{n-\left\lfloor \frac{n}{2}\right\rfloor }\frac{V_{g,n_{1}+1}}{n_{1}!}\left(\frac{x_{0}}{2\pi^{2}}\right)^{n_{1}-1}\frac{1}{(n+1)^{\frac{5g}{2}}}\ll_{g}\frac{e^{L}}{n^{\frac{1}{4}}}.
\end{align*}
For (c), we again use
\[
V_{0,n_{1}+1}(0_{n_{1}},x)\leq V_{0,n_{1}+1}\frac{\sinh\left(\frac{x}{2}\right)}{\frac{x}{2}},
\]
and then apply Lemma \ref{lem:Manin-Zograf-series} to obtain 
\[
(c)\ll_{g,L}\sum_{n_{1}=\lfloor\sqrt{n}\rfloor}^{\infty}\frac{V_{0,n_{1}+1}}{n_{1}!}\left(\frac{x_{0}}{2\pi^{2}}\right)^{n_{1}-1}\ll_{g}\frac{e^{L}}{n^{\frac{1}{4}}}.
\]
Combining, we see that $(3)$ is asymptotically bounded above
\[
(3)\ll_{g}\sum_{n_{1}=2}^{\infty}\frac{1}{n_{1}!}\left(\frac{x_{0}}{2\pi^{2}}\right)^{n_{1}-1}\int_{0}^{L}xV_{0,n_{1}+1}(0_{n_{1}},x)I_{0}\left(\frac{j_{0}x}{2\pi}\right)\mathrm{d}x+\frac{e^{L}}{n^{\frac{1}{4}}}.
\]
Applying once more
\[
V_{0,n_{1}+1}(0_{n_{1}},x)\leq V_{0,n_{1}+1}\frac{\sinh\left(\frac{x}{2}\right)}{\frac{x}{2}},
\]
and noting by Lemma \ref{lem:Manin-Zograf-series} that 
\[
\sum_{n_{1}=2}^{\infty}\frac{V_{0,n_{1}+1}}{n_{1}!}\left(\frac{x_{0}}{2\pi^{2}}\right)^{n_{1}-1}\ll1,
\]
 we see that

\[
(3)\ll_{g}\int_{0}^{L}I_{0}\left(\frac{j_{0}x}{2\pi}\right)\sinh\left(\frac{x}{2}\right)dx+\frac{e^{L}}{n^{\frac{1}{4}}}.
\]
Combining the respective bounds on (1), (2) and (3), we obtain,
\[
\mathbb{E}_{g,n}\left(\frac{N^{\text{s}}\left(X,L\right)}{n}\right)\ll_{g}\int_{0}^{L}I_{0}\left(\frac{j_{0}x}{2\pi}\right)\sinh\left(\frac{x}{2}\right)dx+\frac{e^{L}}{n^{\frac{1}{4}}}.
\]
 Since $\frac{j_{0}}{2\pi}<\frac{1}{2}$, there exists $c_{0}<1$
so that 
\[
\mathbb{E}_{g,n}\left(\frac{N^{\text{s}}\left(X,L\right)}{n}\right)\ll_{g}e^{c_{0}L}+\frac{e^{L}}{n^{\frac{1}{4}}}.
\]
Finally, since $L=O\left(\log n\right)$ there exists a $C>0$ such
that $L\leqslant C\log n$ for $n$ sufficiently large. Then taking
$c_{1}>c_{0}$ with $c_{1}<1$ and $1-c_{1}<\frac{1}{5C}$, by Markov's
inequality, 
\[
\mathbb{P}_{g,n}\left[N^{\text{s}}\left(X,L\right)\geqslant ne^{c_{1}L}\right]\ll_{g}e^{(c_{0}-c_{1})L}+\frac{e^{\left(1-c_{1}\right)L}}{n^{\frac{1}{4}}}\ll_{g}e^{(c_{0}-c_{1})L}+n^{C(1-c_{1})-\frac{1}{4}}\to0
\]
as $n\to\infty$. 
\end{proof}

\subsection{Counting geodesics in pants\protect\label{subsec:pants-counting}}

In this subsection we prove the following.
\begin{prop}
\label{thm:Pants-PGT}Let $Y_{\ell}$ be a hyperbolic pair of pants
with two cusps and one geodesic boundary component of length $\ell$.
For any $\kappa>0$ there are constants $\ell_{0}$ and $T_{0}$ such
that for every $Y_{l}$ with $\ell\in[\ell_{0}/2,\ell_{0}]$ and every
$T\geqslant T_{0},$
\[
N\left(Y_{\ell},T\right)\geqslant\frac{1}{10}e^{\left(1-\kappa\right)T}.
\]
\end{prop}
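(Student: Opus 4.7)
I would compare $Y_\ell$ with its $\ell \to 0$ Deligne--Mumford degeneration $Y_0 \cong \mathbb{H}^2/\Gamma(2)$, the thrice-punctured sphere. Both surfaces share a fundamental group canonically identified with $F_2 = \langle a, b \rangle$ (with $a, b$ loops around the two persistent cusps), and every primitive conjugacy class $[w] \in F_2$ not conjugate to a power of $a$, $b$, or $c := ab$ corresponds to a primitive closed geodesic on both $Y_\ell$ and $Y_0$. The Huber--Selberg prime geodesic theorem applied to the congruence surface $Y_0$ yields $N(Y_0, S) \sim e^S/S$, hence $N(Y_0, S) \geq \tfrac{9}{10}\, e^S/S$ for all $S \geq S_0 = S_0(\kappa)$.

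The heart of the plan is the following uniform length comparison: for every $\kappa > 0$ there exists $\ell_0 = \ell_0(\kappa) > 0$ such that for all $\ell \in [0, \ell_0]$ and all admissible primitive $[w] \in F_2$,
\[
\ell_{Y_\ell}([w]) \leq \bigl(1 + \tfrac{\kappa}{3}\bigr)\,\ell_{Y_0}([w]).
\]
Granted this, setting $T_0 := (1+\kappa/3) S_0$ and invoking the PGT on $Y_0$ gives
\[
N(Y_\ell, T) \geq N\!\bigl(Y_0,\; T/(1+\kappa/3)\bigr) \geq \tfrac{1}{10}\, e^{(1-\kappa) T}, \qquad T \geq T_0,
\]
after enlarging $T_0$ to absorb the polynomial factor $1/T$.

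The main obstacle is establishing the uniform length comparison. Writing $\ell_{Y_\ell}([w]) = 2\arcosh(|\mathrm{tr}\,\rho_\ell(w)|/2)$ for the Fuchsian representation $\rho_\ell : F_2 \to \mathrm{PSL}_2(\mathbb{R})$ of $Y_\ell$, the Fricke trace identities express $\mathrm{tr}\,\rho_\ell(w)$ as a polynomial $P_w$ in the three generator traces $(2,\, 2,\, \pm 2\cosh(\ell/2))$; only the third coordinate depends on $\ell$ and it satisfies $\pm 2\cosh(\ell/2) = \pm 2 + O(\ell^2)$. A logarithmic-derivative estimate $|P_w'(z)/P_w(z)| \lesssim |w|$ together with the quasi-isometry $|w| \asymp \ell_{Y_0}([w])$ on primitive hyperbolic conjugacy classes of $\Gamma(2)$ gives $\ell_{Y_\ell}([w]) - \ell_{Y_0}([w]) = O\bigl(\ell^2 \cdot \ell_{Y_0}([w])\bigr)$ uniformly in $[w]$. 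Taking $\ell_0$ of order $\sqrt{\kappa}$ then secures the required multiplicative bound; the finitely many short or atypical conjugacy classes (where the logarithmic-derivative bound degenerates near parabolic traces) contribute only a bounded additive correction to be absorbed into $T_0$.
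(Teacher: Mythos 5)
Your key lemma has the inequality in the wrong direction, and this breaks the whole argument. In Fricke coordinates, with $a,b$ parabolic generators (traces $2$, $2$) and the third trace $z_\ell = -2\cosh(\ell/2)$ for $Y_\ell$ versus $z_0 = -2$ for $Y_0$, the relevant trace polynomials \emph{grow} as $|z|$ grows. Concretely, normalizing $a = \left(\begin{smallmatrix}1 & s\\ 0 & 1\end{smallmatrix}\right)$, $b = \left(\begin{smallmatrix}1 & 0\\ -t & 1\end{smallmatrix}\right)$ with $st = 2 + 2\cosh(\ell/2)$, the element $ab^{-1}$ has trace $2 + st = 4 + 2\cosh(\ell/2)$, which is strictly larger for $\ell > 0$ than for $\ell = 0$. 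So $\ell_{Y_\ell}([w]) \geq \ell_{Y_0}([w])$ for such $w$, not the reverse. (This is also consistent with the fact that $\delta(S_\ell) < 1 = \delta(Y_0)$: the limit set thins out, so closed geodesics get \emph{longer} as the cusp opens into a boundary.) Consequently the comparison $N(Y_\ell, T) \geq N(Y_0, T/(1+\kappa/3))$ does not follow; at best you get an upper bound on $N(Y_\ell,T)$ by counting on $Y_0$, which is the wrong direction for the proposition.

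There is a second, independent problem: the asserted quasi-isometry $|w| \asymp \ell_{Y_0}([w])$ is false for the thrice-punctured sphere. Parabolic elements distort word length: the primitive hyperbolic class $[a^n b]$ has word length $n+1$ but geodesic length on the order of $\log n$, because the geodesic makes a deep cusp excursion. So the logarithmic-derivative bookkeeping does not control the multiplicative error uniformly over conjugacy classes, even if the direction of the trace inequality were favorable.

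The paper avoids both issues by not comparing to $Y_0$ at all: it passes to the infinite-area extension $S_\ell$ of $Y_\ell$ (glue a funnel to the boundary; $Y_\ell$ is the convex core, so all closed geodesics of $S_\ell$ lie in $Y_\ell$), invokes Naud's prime geodesic theorem $N(S_\ell,[1,T]) = \mathrm{li}(e^{\delta(S_\ell)T}) + O(e^{(\delta/2+1/4)T})$ with an $\ell$-uniform error term, and then uses McMullen's continuity of $\delta(S_\ell)$ with $\delta(S_\ell) \to 1$ as $\ell \to 0$ to choose $\ell_0$ so that $\delta(S_\ell) > 1-\kappa$. This gives the lower bound directly on the surfaces $Y_\ell$ without any cross-surface length comparison.
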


Proposition \ref{thm:Pants-PGT} relies on a prime geodesic theorem
with precise error terms, due to Naud \cite{Na.05}. For a hyperbolic
surface $S$ and $0\leqslant a<b$, let $N\left(S,[a,b]\right)$ denote
the number of closed geodesics on $S$ with lengths in the interval
$[a,b]$.
\begin{thm}[\cite{Na.05}]
\label{thm:Naud-PGT}Let $S_{\ell}$ be an infinite-volume hyperbolic
pair of pants with two cusps and a funnel of width $\ell<1$. Then
for any $0<a\leq b<1$,
\[
N\left(S_{\ell},T\right)=\textup{li}\left(e^{\delta\left(S_{\ell}\right)T}\right)+O\left(e^{\left(\frac{\delta\left(S_{\ell}\right)}{2}+\frac{1}{4}\right)T}\right),
\]
where the implied constant is uniform over $\ell\in[a,b]$, and $\delta(S_{\ell})$
is the Hausdorff dimension of the limit set of a Fuchsian group $\Gamma$
for which $S_{\ell}=\Gamma\backslash\mathbb{H}.$
\end{thm}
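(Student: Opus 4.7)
The plan is to bound $N(Y_\ell, T)$ from below by $N(S_\ell, [1,T])$ via a convex-core comparison and then apply Theorem \ref{thm:Naud-PGT} after controlling the critical exponent $\delta(S_\ell)$. Without loss of generality I may assume $\kappa < 1/2$, since a lower bound established for smaller $\kappa$ automatically implies the stated bound for any larger $\kappa$ (the target exponent just shrinks).

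First, $S_\ell$ is obtained by isometrically gluing a funnel of width $\ell$ to $Y_\ell$ along its geodesic boundary, and the convex core of $S_\ell$ is exactly $Y_\ell$. Since every closed geodesic on an infinite-volume hyperbolic surface lies in the convex core, and lengths are preserved by this inclusion, each primitive closed geodesic on $S_\ell$ is a primitive closed geodesic on $Y_\ell$ of the same length. Hence
\[
N(Y_\ell, T) \geq N(S_\ell, [1, T]) \qquad (T \geq 1).
\]

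Next, I would show $\delta(S_\ell) \to 1$ as $\ell \to 0^+$. As $\ell \to 0$ the funnel degenerates to a cusp and the Fuchsian groups uniformizing $S_\ell$ converge geometrically to the lattice uniformizing the thrice-punctured sphere, whose critical exponent equals $1$. By standard continuity results for the Hausdorff dimension of limit sets under this type of degeneration (e.g.\ work of McMullen), $\delta(S_\ell) \to 1$, so there exists $\ell_0 > 0$ with $\delta(S_\ell) \geq 1 - \kappa/2$ for all $\ell \leq \ell_0$. Writing $\delta = \delta(S_\ell)$, Theorem \ref{thm:Naud-PGT} gives $\operatorname{li}(e^{\delta T}) \geq e^{\delta T}/(2\delta T)$ for large $T$, while the error is $O(e^{(\delta/2 + 1/4)T})$ with implied constant independent of $\ell$. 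The assumption $\kappa < 1/2$ ensures $\delta/2 + 1/4 \leq 3/4 < 1 - \kappa/2 \leq \delta$ uniformly in $\ell \leq \ell_0$, so the error is swamped by the main term and
\[
N(S_\ell, [1, T]) \geq \frac{e^{(1 - \kappa/2)T}}{5 T} \geq \tfrac{1}{10}\, e^{(1 - \kappa)T}
\]
once $e^{(\kappa/2)T} \geq 2T$, which holds for all $T \geq T_0$ for some $T_0 = T_0(\kappa)$. Combined with the first step this yields the claim.

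The main obstacle is the convergence $\delta(S_\ell) \to 1$ as $\ell \to 0^+$. One must pin down the correct notion of geometric convergence of the uniformizing groups $\Gamma_\ell$ (accounting for the cusp that appears in the limit along the pinched curve) and invoke a suitable continuity result for the Hausdorff dimension of the limit set in this algebraic/geometric limit. This is the only genuinely non-routine input beyond Theorem \ref{thm:Naud-PGT}; the convex-core comparison and the exponential-asymptotics calculation are standard once the dimension bound and the $\ell$-uniformity of Naud's error term are in hand.
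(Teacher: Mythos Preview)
Your proposal does not address the stated theorem. Theorem~\ref{thm:Naud-PGT} is Naud's prime geodesic theorem for the infinite-volume surface $S_\ell$, asserting the asymptotic $N(S_\ell,[1,T])=\mathrm{li}(e^{\delta T})+O(e^{(\delta/2+1/4)T})$ with an $\ell$-independent implied constant. Your write-up instead \emph{assumes} Theorem~\ref{thm:Naud-PGT} and deduces the lower bound $N(Y_\ell,T)\geqslant\tfrac{1}{10}e^{(1-\kappa)T}$, which is the content of Proposition~\ref{thm:Pants-PGT}, one statement earlier in the paper. So as a proof of the statement actually given, it is vacuous: it uses the conclusion as a hypothesis.

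For comparison, the paper's proof of Theorem~\ref{thm:Naud-PGT} is not a self-contained argument either; it explains how to extract $\ell$-uniformity from Naud's original proof. The key points are: (i) the constant in the crucial zero-free/resonance estimate in Naud's argument can be made independent of $\ell<1$ by invoking uniform resolvent/resonance bounds (Borthwick), (ii) restricting to the window $[1,T]$ removes any $\ell$-dependence from short-geodesic terms in the explicit formula, and (iii) for these surfaces $\delta(S_\ell)(1-\delta(S_\ell))$ is the only $L^2$-eigenvalue, so no additional spectral contributions enter the main term. None of this is touched by your proposal.

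If your intended target was Proposition~\ref{thm:Pants-PGT}, then your argument is essentially the paper's: glue a funnel, pass to the convex core, use continuity of $\delta(S_\ell)$ (the paper cites McMullen for this, which handles the ``main obstacle'' you flag), and apply Theorem~\ref{thm:Naud-PGT}. In that case the approach is correct and matches the paper; the only difference is cosmetic (you take $\delta\geqslant 1-\kappa/2$ and absorb the extra $\kappa/2$ into the polynomial loss from $\mathrm{li}$, whereas the paper takes $\delta>1-\kappa$ directly).
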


\begin{proof}
This is essentially \cite[Theorem 1.2]{Na.05} with some small adaptations
to make the error term uniform in $\ell$ over the compact window.
To this end, note firstly that, for example by \cite[Theorem 3.1]{McM1998},
the Hausdorff dimension, and hence the first resonance $\delta_{0}(S_{\ell})=\delta(S_{\ell})(1-\delta(S_{\ell}))$
is continuous in $\ell$. Moreover, we fix a topological surface $\Sigma$
and can equip each $S_{\ell}$ with a marking $\varphi:\Sigma\to S_{\ell}$
such that for any free homotopy class of closed curve $[\alpha]$
on $\Sigma$, the length $\ell_{[\alpha]}(S_{\ell})$ of the geodesic
representative of $[\varphi(\alpha)]$ on $S_{\ell}$ is continuous
in $\ell$. 

We now follow precisely the notation as in the proof of \cite[Theorem 1.2]{Na.05}.
Note that for fixed $X$ and $Y$ as in the proof, the number of closed
geodesics on $S_{\ell}$ can be bounded uniformly as $\ell$ ranges
over $[a,b]$ since the systole of these surface is uniformly bounded
below by $a$. It follows that in {[}ibid. Equation (2.19){]} the
geometric (right-hand) side of the formula is locally continuous
and hence continuous on $[a,b]$. Coupled with the continuity of the
first resonance, we see that the error term 

\[
\sum_{s\in\mathcal{R}_{S_{\ell}}\backslash\delta_{0}(S_{\ell})}\hat{g}(s)
\]
is also continuous (note that for us, $\mathcal{R}_{S_{\ell}}=\mathcal{R}_{S_{\ell}}^{+}\cup{\delta_{0}(S_{\ell})}$
since by \cite{Ba.Ma.Mo17}, $\delta_{0}\left(S_{\ell}\right)$ is
the only $L^{2}$-eigenvalue). The proof now continues identically
up to equations (2.27) and (2.28) where the bound on the error term
is dependent on $\ell$ in a pointwise manner of the form:
\begin{proof}
\[
\left|\sum_{s\in\mathcal{R}_{S_{\ell}}\backslash\delta_{0}(S_{\ell})}\hat{g}(s)\right|\leq C_{1}(\ell)+C_{2}(\ell)X^{a}Y^{b}+C_{3}(\ell)X^{c}Y^{d}.
\]
But, using the continuity of the left-hand side, the $C_{i}(\ell)$
on the right-hand side can be made uniform over all $\ell\in[a,b]$.
 The proof then continues identically as in \cite[Theorem 1.2]{Na.05}
propagating these uniform bounds throughout the remainder.
\end{proof}
\end{proof}
We now prove Proposition \ref{thm:Pants-PGT}.
\begin{proof}[Proof of Proposition \ref{thm:Pants-PGT}]
 By gluing a funnel of width $\ell_{i}$ to $Y_{\ell}$, we obtain
a complete surface $S_{\ell}$ of infinite area. Since $Y_{\ell}$
is the convex core of $S_{\ell}$, any closed geodesic on $S_{\ell}$
lies in $Y_{\ell}$, and so it remains to prove the statement for
$S_{\ell}$.

Let $\kappa>0$ be given. By e.g. \cite[Theorem 3.1]{McM1998}, the
Hausdorff dimension $\delta\left(S_{\ell}\right)$ is a continuous
function in $\ell$ with $\lim_{\ell\to0}\delta\left(S_{\ell}\right)=1$.
Then we can find an $\ell_{0}>0$ such that 
\[
\delta\left(S_{\ell}\right)>1-\kappa
\]
for all $\ell<\ell_{0}$. By Theorem \ref{thm:Naud-PGT} we have that
\[
N\left(S_{\ell},T\right)\geqslant\text{li}\left(e^{\delta\left(S_{\ell}\right)T}\right)-O\left(e^{\frac{3}{4}T}\right),
\]
where the implied constant is uniform over $\ell\in[\ell_{0}/2,\ell_{0}]$.
Assuming $\kappa<\frac{1}{4}$, the leading term dominates and there
exists a $T_{0}$ such that for every $\ell<\ell_{0},$
\[
N\left(S_{\ell},T\right)\geqslant\frac{1}{10}e^{\left(1-\kappa\right)T}
\]
for all $T>T_{0}$.
\end{proof}

\subsection{Proof of Theorem \ref{thm:simple-nonsimple-2}}

We can now proceed with the proof of Theorem \ref{thm:simple-nonsimple-2}.
\begin{proof}
Let $L\to\infty$ with $L=O\left(\log n\right)$ be given and $c_{1}$
be the constant given by Lemma \ref{lem:simple-curve-upper-bound}.
Choose $\kappa<1-c_{1}$ and let $\ell_{0}\left(\kappa\right)$ be
given as in Proposition \ref{thm:Pants-PGT}. By an identical argument
to that in the proof of Theorem \ref{thm:eigenvalues}, namely the
probabilistic bounds on $N_{2}(X,L)$, there exists a $C=C\left(\ell_{0}\right)>0$
such that a.a.s.
\[
N_{2}\left(X,[\ell_{0}/2,\ell_{0}]\right)\geqslant Cn,
\]
where $N_{2}\left(X,[\ell_{0}/2,\ell_{0}]\right)$ counts the number
of unoriented primitive closed geodesics on $X$ with lengths in $[\ell_{0}/2,\ell_{0}]$
that separate off a pair of pants with two cusps from $X$ when cut
along. Then a.a.s. $X$ contains at least $Cn$ disjoint subsurfaces
$\left\{ Y_{i}\right\} $ with two cusps and a geodesic boundary with
length in $[\ell_{0}/2,\ell_{0}]$. Therefore by Proposition \ref{thm:Pants-PGT},
\[
\#\left\{ \gamma\in\mathcal{P}\left(X\right)\mid\ell_{\gamma}\left(X\right)\leqslant L\right\} \geqslant\sum_{i}\#\left\{ \gamma\in\mathcal{P}\left(Y_{i}\right)\mid\ell_{\gamma}\left(Y_{i}\right)\leqslant L\right\} \geqslant Cne^{\left(1-\kappa\right)L},
\]
a.a.s. where $\mathcal{P}(X)$ is the collection of primitive closed
geodesic on $X$. By Lemma (\ref{lem:simple-curve-upper-bound}),
we have 
\[
\frac{N^{s}\left(X,L\right)}{N\left(X,L\right)}\leqslant\frac{ne^{c_{1}L}}{Cne^{\left(1-\kappa\right)L}},
\]
a.a.s.. Then since $N\left(X,L\right)=N^{\text{s}}\left(X,L\right)+N^{\text{ns}}\left(X,L\right)$
we see
\[
\frac{N^{s}\left(X,L\right)}{N^{\text{ns}}\left(X,L\right)}=\left(\frac{1}{1-\frac{N^{\text{s}}\left(X,L\right)}{N\left(X,L\right)}}\right)\frac{N^{s}\left(X,L\right)}{N\left(X,L\right)}\to0
\]
as $n\to\infty$.
\end{proof}

\section*{Acknowledgments}

JT was supported by funding from the European Research Council (ERC)
under the European Union’s Horizon 2020 research and innovation programme
(grant agreement No 949143). We thank Michael Magee, Alex Wright and
Yunhui Wu for conversations about this work. We also thank Baptiste
Louf for comments on an earlier version of this work. Finally we thank
Timothy Budd and Nicolas Curien for sharing a preliminary version
of their work \cite{Bu.Cu2024} with us for comparing the geometric
results.

\bibliographystyle{plain}
\bibliography{puncturedspheresbib}

\noindent Will Hide, \\
Mathematical Institute,\\
University of Oxford, \\
Andrew Wiles Building, OX2 6GG Oxford,\\
United Kingdom

\noindent\texttt{william.hide@maths.ox.ac.uk}~\\
\texttt{}~\\

\noindent Joe Thomas, \\
Department of Mathematical Sciences,\\
Durham University, \\
Lower Mountjoy, DH1 3LE Durham,\\
United Kingdom

\noindent\texttt{joe.thomas@durham.ac.uk}
\end{document}